\documentclass[11pt]{article}
\usepackage[utf8]{inputenc}

\usepackage[title]{appendix}
\usepackage{epsfig,epsf,fancybox}
\usepackage{amsmath}
\usepackage{mathrsfs}
\usepackage{amsrefs}
\usepackage{amssymb}
\usepackage{graphicx}
\usepackage{color}
\usepackage{multirow}
\usepackage{paralist}
\usepackage{verbatim}
\usepackage{galois}
\usepackage{algorithm}
\usepackage{algorithmic}
\usepackage{boxedminipage}
\usepackage{booktabs}
\usepackage{accents}
\usepackage{stmaryrd}
\usepackage{subfig}
\usepackage{epstopdf}
\usepackage{amsthm}
\usepackage{cases}  
\usepackage[top=1in,bottom=1.2in,left=1in,right=1in,xetex]{geometry}
\usepackage[pdfborder={0 0 0},colorlinks=true,linkcolor=blue,CJKbookmarks=true]{hyperref}
\usepackage{enumerate}

\newcommand\blfootnote[1]{%
	\begingroup
	\renewcommand\thefootnote{}\footnote{#1}%
	\addtocounter{footnote}{-1}%
	\endgroup
}

\numberwithin{equation}{section}

\allowdisplaybreaks[4]

\newtheorem{theorem}{Theorem}[section]
\newtheorem{lemma}[theorem]{Lemma}
\newtheorem{assumption}[theorem]{Assumption}
\newtheorem{proposition}[theorem]{Proposition}
\newtheorem{definition}[theorem]{Definition}
\newtheorem{remark}[theorem]{Remark}
\DeclareMathOperator{\esssup}{ess~sup}

\title{Multidimensional Backward Stochastic Differential Equations with Rough Drifts\footnotemark[1]}
\author{Jiahao Liang\footnotemark[2] \ and Shanjian Tang\footnotemark[3]} 

\begin{document}

\maketitle

\pagenumbering{arabic}

\begin{abstract}
In this paper, we study a multidimensional backward stochastic differential equation (BSDE) with an additional rough drift (rough BSDE), 
and give the existence and uniqueness of the adapted solution, either when the terminal value and the geometric rough path are small, 
or when each component of the rough drift only depends on the corresponding component of the first unknown variable 
(but dropped is the one-dimensional assumption of Diehl and Friz [Ann. Probab. 40 (2012), 1715-1758]). 
We also introduce a new notion of the $p$-rough stochastic integral for $p \in \left[2, 3\right)$, 
and  then succeed in giving---through a fixed-point argument---a general existence and uniqueness result on a multidimensional rough BSDE with a general square-integrable terminal value, 
allowing the rough drift to be random and time-varying but having to be linear; 
furthermore, we connect it to a system of rough partial differential equations. 
\end{abstract}

\renewcommand{\thefootnote}{\fnsymbol{footnote}}

\blfootnote{\textit{Key words and phrases.} rough path, backward stochastic differential equation, system of rough PDEs. }
\blfootnote{\textit{MSC2020 subject classifications}: 60L20, 60H10, 60L50. }
\footnotetext[1]{This work was partially supported by National Natural Science Foundation of China (Grants No. 11631004 and No. 12031009),  Key Laboratory of Mathematics for Nonlinear Sciences (Ministry of Education), and Shanghai Key Laboratory for Contemporary Applied Mathematics, Fudan University, Shanghai 200433, China.}
\footnotetext[2]{School of Mathematical Sciences, Fudan University, Shanghai 200433, China. E-mail: jhliang20@fudan.edu.cn.}
\footnotetext[3]{Institute of Mathematical Finance and Department of Finance and Control Sciences, School of Mathematical Sciences, Fudan University, Shanghai 200433, P. R. China. Email: sjtang@fudan.edu.cn.}

\section{Introduction} \label{S1}
The rough integral $\int_{0}^{\cdot} Y d \mathbf{X}$ and rough differential equation (RDE) 
\begin{equation*}
	Y_{t}=Y_{0}+\int_{0}^{t} F\left(Y\right) d \mathbf{X}, \quad t \in \left[0, T\right],
\end{equation*}
for a two-step $\alpha$-Hölder rough path $\mathbf{X}=\left(X, \mathbb{X}\right)$ 
with $\alpha \in \left(\frac{1}{3}, \frac{1}{2}\right]$ and a path $Y$ 
controlled by $X$ (see \cites{RPT1, RPT2, SL1}), extend the work of Young \cite{YI} 
and have received numerous attentions. 
Rough integrals of controlled rough paths have been generalized 
to arbitrary $\alpha \in \left(0, 1\right]$ (see, e.g., \cites{RI1, RI2, RI3}). 
The well-posedness of RDEs has been discussed in various settings 
(see, e.g., \cites{RDE1, RPB2, RDE2, RDE3, RPJ}). 
Moreover, the rough path theory provides a fresh perspective on Itô's 
stochastic calculus theory. 
Rough integrals against an enhanced Brownian motion 
(and also continuous semimartingales enhanced to rough paths) 
coincide with stochastic integrals whenever both are well-defined 
(see, e.g., \cites{RPB1, RPB2}). 
It has also been extended to càdlàg semimartingales (see \cites{SM1, SM2}). 
Hence, SDEs can be solved pathwisely as RDEs driven by an enhanced Brownian motion. 
A pathwise interpretation of SPDEs also gives rise to numerous studies of 
rough partial differential equations (rough PDEs) 
(see, e.g., \cites{RPDE1, RPDE2, RPDE3, RPDE4, RPDE5}).\\
\indent
Differential equations driven simultaneously by a Brownian motion 
$W$ and by a rough path $\mathbf{X}$ have also been studied. 
Diehl and Friz \cite{RBSDE} first study 
backward stochastic differential equations with rough drifts (rough BSDEs) 
\begin{equation} \label{inrbsde1}
	Y_{t}=\xi+\int_{t}^{T} f_{r}\left(Y_{r}, Z_{r}\right) d r+\int_{t}^{T} g\left(S_{r}, Y_{r}\right) d \mathbf{X}_{r}-\int_{t}^{T} Z_{r} d W_{r}, \quad t \in \left[0, T\right],
\end{equation}
for given an Itô process $S$, a deterministic vector field $g$ 
and a geometric rough path $\mathbf{X}$, 
but they restrict themselves within the case  that the first unknown process $Y$ is scalar valued. 
With a flow transformation to separate stochastic integrals from rough integrals, they  
first solve RDEs and the reduced (from the flow transformation) quadratic BSDEs, respectively; then they give the well-posedness of rough BSDE \eqref{inrbsde1} via the continuity of both solution maps of RDEs and quadratic BSDEs. 
Motivated by robustness problems in stochastic filtering, 
Crisan et al. \cite{RSDE1} first study stochastic differential equations with rough drifts (rough SDEs) with deterministic and time-invariant coefficients
\begin{equation} \label{rsde1}
	S_{t}=S_{0}+\int_{0}^{t} b\left(S_{r}\right) d r+\int_{0}^{t} F\left(S_{r}\right) d \mathbf{X}_{r}+\int_{0}^{t} \sigma\left(S_{r}\right) d W_{r}, \quad t \in \left[0, T\right], 
\end{equation}
and give the well-posedness of rough SDE \eqref{rsde1} via the technique of the flow transformation. 
Diehl et al. \cite{RSDE2} also obtain the well-posedness of rough SDE \eqref{rsde1} 
by constructing a joint rough lift of $\left(W\left(\omega\right), \mathbf{X}\right)$ 
and solving rough SDE \eqref{rsde1} pathwisely as an RDE. 
Friz et al. \cite{RSDE3} introduce stochastic controlled rough paths and establish rough 
stochastic integrals of them against two-step $\alpha$-Hölder rough paths 
using an extension of the stochastic sewing lemma of L\^{e} \cite{SSL1}*{Theorem 2.1}. 
Moreover, they introduce stochastic controlled vector fields and give by a fixed-point 
argument the well-posedness of rough SDEs with random and time-varying coefficients 
\begin{equation*} 
	S_{t}=S_{0}+\int_{0}^{t} b_{r}\left(S_{r}\right) d r+\int_{0}^{t} F_{r}\left(S_{r}\right) d \mathbf{X}_{r}+\int_{0}^{t} \sigma_{r}\left(S_{r}\right) d W_{r}, \quad t \in \left[0, T\right].
\end{equation*}
\indent
In this paper, we consider multidimensional rough BSDEs 
\begin{equation} \label{inrbsde2}
	Y_{t}=\xi+\int_{t}^{T} f_{r}\left(Y_{r}, Z_{r}\right) d r+\int_{t}^{T} g\left(Y_{r}\right) d \mathbf{X}_{r}-\int_{t}^{T} Z_{r} d W_{r}, \quad t \in \left[0, T\right],
\end{equation}
and thus drop the one-dimensional restriction of Diehl and Friz \cite{RBSDE} on the first unknown process $Y$. 
Here, $g$ is a given deterministic vector field 
and $\mathbf{X}$ is a given geometric rough path. 
A solution to rough BSDE \eqref{inrbsde2} is defined as the limit of smooth approximations 
(see Definition \ref{solud2}). 
With the help of  the well-posedness results on multidimensional quadratic BSDEs of both Tevzadze \cite{QBSDE} 
and Fan et al. \cite{DQBSDE1}, we give via the flow transformation method of \cite{RBSDE} 
the existence and uniqueness of the solution to the rough BSDE~\eqref{inrbsde2}, 
either when $\xi$ and $\mathbf{X}$ are sufficiently small (see Theorem \ref{solueu2}) 
or when each component of $g$ only depends on the corresponding component of the first unknown variable $y$ (see Theorem \ref{solueu3}).\\
\indent
In the particular case of a linear (but possibly both {\it random and time-varying}) rough drift, we also solve with the  fixed-point principle the following rough BSDE 
\begin{equation} \label{inrbsde3}
	Y_{t}=\xi+\int_{t}^{T} f_{r}\left(Y_{r}, Z_{r}\right) d r+\int_{t}^{T} \left(G_{r}Y_{r}+H_{r}\right) d \mathbf{X}_{r}-\int_{t}^{T} Z_{r} d W_{r}, \quad t \in \left[0, T\right],
\end{equation}
when the terminal value $\xi$ is just square-integrable and the rough integrator $\mathbf{X}$ is a conventional rough path. 
A crucial  difficulty we are now facing with  is the general lack of a Hölder continuity  for a solution $\left(Y, Z\right)$ to a BSDE, of  the following map:
\begin{equation*}
 t \mapsto \int_{0}^{t} Z_{r} d W_{r} \quad \in L^{2}\left(\Omega\right), \quad t\in [0,T],
\end{equation*}
which leads to the failure of using as in Friz et al. \cite{RSDE3} the fixed-point argument 
on the  space of stochastic controlled rough paths of Hölder continuity. 
To overcome this difficulty, we introduce a novel notion of the {\it rough stochastic integral 
in $p$-variation scale}, or simply {\it $p$-rough stochastic integral}, 
for $p \in \left[2, 3\right)$. 
Since the integral $\int_{0}^{\cdot} Z_{r} d W_{r}$ is a square-integrable martingale, 
it naturally has a finite $p$-variation as a path from 
$\left[0, T\right]$ to $L^{2}\left(\Omega\right)$ (see Lemma \ref{mtg}). 
Another crucial difficulty is the deduction with the general stochastic sewing lemma 
of L\^{e} \cite{SSL2}*{Theorem 3.1} the boundedness of the following rough stochastic integral map 
\begin{equation} \label{inrsim}
	\left(Y, Y^{\prime}\right) \mapsto \left(\int_{0}^{\cdot} Y d \mathbf{X}, Y\right)
\end{equation}
in the definition of the $p$-rough stochastic integral of a square-integrable 
stochastic controlled rough path $\left(Y, Y^{\prime}\right)$ against $\mathbf{X}$. 
To overcome this difficulty, we appeal to the decomposition $Y=Y^{M}+Y^{J}$ where $Y^{M}$ is a square-integrable martingale 
with $Y_{0}^{M}=0$ and $\left(Y^{J}, Y^{\prime}\right)$ is a controlled 
rough path taking values in $L^{2}\left(\Omega\right)$ (see Proposition \ref{deco}). 
Then, with the help of  the stochastic sewing lemma and (deterministic) sewing lemma,  
we obtain $\int_{0}^{\cdot} Y^{M} d \mathbf{X}$ and 
$\int_{0}^{\cdot} Y^{J} d \mathbf{X}$, respectively. 
Due to the norm equivalence \eqref{enorme} induced by this decomposition, we 
get the boundedness of the rough stochastic integral map \eqref{inrsim}. 
Using a fixed-point argument, we obtain the following well-posedness result. 
\begin{theorem}
	Let $\xi \in L^{2}\left(\Omega, \mathcal{F}_{T}\right)$, 
	$\mathbf{X}$ be a two-step $p$-rough path for $p \in \left[2, 3\right)$, 
	$\left(G, G^{\prime}\right)$ and $\left(H, H^{\prime}\right)$ 
	be an essentially bounded and a square-integrable stochastic controlled 
	rough path of finite $\left(p, p\right)$-variation, respectively (see Definition \ref{scrp}). 
	Assume that $f$ is progressively measurable and uniformly Lipschitz continuous in $y$ and $z$, 
	and $f\left(0, 0\right)$ is a square-integrable process. 
	Then the rough BSDE \eqref{inrbsde3} has a unique solution $\left(Y, Z\right)$ 
	(see Theorem \ref{solueu}). Moreover, the solution map 
	\begin{equation*}
		\left(\xi, f, G, G^{\prime}, H, H^{\prime}, \mathbf{X}\right) \mapsto \left(Y, Z\right)
	\end{equation*}
	is continuous (see Theorem \ref{solumc}). 
\end{theorem}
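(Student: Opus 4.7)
My plan is to cast rough BSDE \eqref{inrbsde3} as a fixed-point problem on the Banach space of square-integrable stochastic controlled rough paths of finite $(p,p)$-variation, equipped with the norm \eqref{norm} induced by the decomposition $Y = Y^{M}+Y^{J}$. Given an input $(Y, Y')$ with Gubinelli derivative $Y'$ and an auxiliary process $Z \in L^{2}(\Omega \times [0,T])$, the linearity of the rough drift together with the $L^{\infty}$ hypothesis on $(G,G')$ and the $(p,p)$-variation hypothesis on $(H,H')$ ensures that $(G Y+H,\, G'Y+GY'+H')$ is itself a square-integrable stochastic controlled rough path. The boundedness of the rough stochastic integration map \eqref{inrsim}, established in the preceding section through the Stochastic Sewing Lemma applied separately to $Y^{M}$ and $Y^{J}$, then produces the rough integral
$$I_{t}(Y) := \int_{t}^{T}(G_{r}Y_{r}+H_{r})\,d\mathbf{X}_{r}$$
with quantitative estimates. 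Using the martingale representation theorem to extract $\widetilde Z$ from the $\mathcal{F}_{t}$-martingale $\mathbb{E}\bigl[\xi+\int_{0}^{T}f_{r}(Y_{r},Z_{r})\,dr + I_{0}(Y)\,\big|\,\mathcal{F}_{t}\bigr]$, I would define the Picard map $\Phi:(Y,Y',Z)\mapsto(\widetilde Y,\widetilde Y',\widetilde Z)$ by
$$\widetilde Y_{t} := \xi + \int_{t}^{T} f_{r}(Y_{r}, Z_{r})\,dr + I_{t}(Y) - \int_{t}^{T} \widetilde Z_{r}\,dW_{r}, \qquad \widetilde Y'_{t} := G_{t}Y_{t} + H_{t}.$$

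Next I would verify that $\Phi$ preserves the space: writing $\widetilde Y = \widetilde Y^{M}+\widetilde Y^{J}$ with $\widetilde Y^{M}_{t} := \int_{0}^{t} \widetilde Z_{r}\, dW_{r}$ a square-integrable martingale (of finite $p$-variation in $L^{2}(\Omega)$ by Lemma~\ref{mtg}) and $\widetilde Y^{J}$ the remainder, the norm \eqref{norm} becomes tractable, and combining the bound on $I(Y)$ with the Lipschitz estimate on $f$ and standard estimates on the Bochner and It\^o integrals yields a first stability bound. The core of the argument is the contraction step: applied to two inputs, the Lipschitz property of $f$ contributes a factor linear in the time horizon, while the boundedness of \eqref{inrsim} contributes a factor controlled by the rough-path control $\omega_{\mathbf{X}}$. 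On an interval $[T-h,T]$ with $h$ sufficiently small (depending only on the Lipschitz constant of $f$, the $L^{\infty}$ bound on $(G,G')$, and $\omega_{\mathbf{X}}$), $\Phi$ would then be a strict contraction, giving a unique local fixed point. Since $h$ does not depend on the terminal datum, a finite number of time-slices will glue the local solutions into a unique global one on $[0,T]$, establishing Theorem~\ref{solueu}.

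For the continuity assertion (Theorem~\ref{solumc}), I would apply the same a priori estimate to differences of solutions driven by different data, combined with the continuity of the rough stochastic integration in $(G,G',H,H',\mathbf{X})$ furnished by the Stochastic Sewing Lemma, to obtain the claim. The main obstacle, as the authors emphasize, will lie in the preservation step rather than in the contraction itself: since $\int_{0}^{\cdot}\widetilde Z_{r}\, dW_{r}$ is not H\"older in $L^{2}(\Omega)$, one cannot work in the H\"older scale of \cite{RSDE3}, and the norm \eqref{norm} compatible with the $Y^{M}+Y^{J}$ decomposition is precisely what keeps the composition $(Y,Y')\mapsto(GY+H,\ldots)\mapsto \bigl(\int(GY+H)\, d\mathbf{X},\,GY+H\bigr)$ closed on the space at hand. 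Once this closedness is secured, everything else reduces to routine applications of the tools developed earlier in the paper.
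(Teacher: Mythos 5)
Your overall architecture coincides with the paper's proof of Theorems \ref{solueu} and \ref{solumc}: a Picard map built from the martingale representation theorem acting on triples $(Y,Y',Z)$, the space normed via the $Y^{M}+Y^{J}$ decomposition as in \eqref{norm}, a contraction on small sub-intervals, backward gluing, and difference estimates plus stability of the rough stochastic integral for the continuity claim. However, your contraction step has a genuine gap as stated. You assert that taking the slice $[T-h,T]$ small (so that the time length and the rough-path increment are small) already makes $\Phi$ a strict contraction. This fails if the parameter $K$ in \eqref{norm} is kept fixed, say $K=1$: the Gubinelli derivative of the output is $\Phi^{Y'}=GY+H$, so the difference of outputs contains $G\,\delta\Delta Y$, whose variation norm picks up the full martingale part $\left\|\delta\Delta Y^{M}\right\|_{2,p\text{-}var}$ of the input multiplied by $\left\|G\right\|_{\infty}$, with no factor that shrinks with the interval (this is visible in \eqref{rsie2}, whose right-hand side carries the irreducible term $\tfrac{1}{K}$, and in Lemma \ref{scrplc}). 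The paper's fix is exactly the two-parameter balancing you do not mention: weight the martingale and remainder slots by $K$ and the derivative slot by $1$, so that moving mass from the weight-$K$ slots into the derivative slot gains $\tfrac{1}{K}$, while the reverse move costs $K$ but comes with $\left|\mathbf{X}\right|_{p\text{-}var}\le\varepsilon$; then choose $K$ of order $1+\left\|\left(G, G^{\prime}\right)\right\|_{\mathscr{D}_{X}^{\left(p, p\right)\text{-}var}L_{\infty}}$ and $\varepsilon:=K^{-4}$ as in \eqref{Kedenf}. Without an explicit choice of $K$ tied to $\left\|\left(G,G^{\prime}\right)\right\|$, smallness of $h$ alone does not close the estimate.

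Two further points. First, your uniqueness is only uniqueness of the fixed point in the space you iterate on, whereas Definition \ref{solud} admits solutions whose controlled structure has arbitrary admissible exponents $(q,q')$ and carries no quantitative bound; to conclude uniqueness in that class you still need the regularity upgrade of Proposition \ref{ape}, which shows that any solution is automatically a $(p,p)$-variation stochastic controlled rough path and hence a fixed point of $\Phi$ on each slice. Second, the partition must be chosen so that $\left|\mathbf{X}\right|_{p\text{-}var;\left[t_{i}, t_{i+1}\right]}\le\varepsilon$ on every slice (possible by superadditivity of the associated control), not merely so that the mesh is small; and for Theorem \ref{solumc} the difference estimate needs the uniform a priori bound of Proposition \ref{solubd} before the stability results (Theorem \ref{rsistab}, Lemma \ref{lcstab}) can be invoked, together with a dominated-convergence argument to handle the driver perturbation. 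With these ingredients supplied, your sketch matches the paper's proof.
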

The theorem is new even in the one-dimensional case, 
for the terminal value $\xi$ is only required to be square-integrable 
(not necessarily essentially bounded as required in Diehl and Friz \cite{RBSDE}) 
and the rough integrator $\mathbf{X}$ can be non-geometric. \\
\indent
In a Markovian setting, we give an application to systems of rough PDEs 
with coefficients depending on the rough path variable $\mathbf{X}$ 
\begin{equation} \label{inrpde}
	\left\{
	\begin{aligned}
		&du\left(t, x, \mathbf{X}\right)+\frac{1}{2}D^{2}u\left(t, x, \mathbf{X}\right)\sigma_{t}^{2}\left(x, \mathbf{X}\right)dt+Du\left(t, x, \mathbf{X}\right)b_{t}\left(x, \mathbf{X}\right)dt\\
		&\quad+f_{t}\left(x, u\left(t, x, \mathbf{X}\right), Du\left(t, x, \mathbf{X}\right)\sigma_{t}\left(x, \mathbf{X}\right), \mathbf{X}\right)dt\\
		&\quad+\left[G_{t}\left(\mathbf{X}\right)u\left(t, x, \mathbf{X}\right)+h_{t}\left(x, \mathbf{X}\right)+H_{t}\left(\mathbf{X}\right)\right]d\mathbf{X}_{t}=0,\\
		&\quad \left(t, x, \mathbf{X}\right) \in \left[0, T\right) \times \mathbb{R}^{d} \times \mathscr{C}_{g}^{0, p\text{-}var};\\
		&u\left(T, x, \mathbf{X}\right)=l\left(x, \mathbf{X}\right), \quad \left(x, \mathbf{X}\right) \in \mathbb{R}^{d} \times \mathscr{C}_{g}^{0, p\text{-}var}.
	\end{aligned}
	\right.
\end{equation}
Here, $\mathscr{C}_{g}^{0, p\text{-}var}$ is the space of all two-step geometric $p$-rough paths. 
We define a solution $u$ to \eqref{inrpde} as a continuous map from $\mathscr{C}_{g}^{0, p\text{-}var}$ to $BUC$, 
such that $u\left(\cdot, \cdot, \mathbf{X}\right)$ is a viscosity solution to 
the corresponding system of PDEs for the canonical rough lift $\mathbf{X}$ 
of any smooth path $X$ (see Definition \ref{rpdesolu}), 
where $BUC$ is the space of bounded uniformly continuous functions with the uniform topology. 
Under certain conditions, we obtain the existence and uniqueness of the solution to equation \eqref{inrpde}, 
and give a stochastic representation of the solution (see Theorem \ref{rpdesolueu}). 
To our best knowledge, this is a first study to multidimensional BSDEs 
with conventional rough drivers.\\
\indent
The paper is organized as follows. 
Section \ref{S2} contains preliminary notations and results. 
In Section \ref{S3}, we study the solvability of BSDEs with a nonlinear rough drift \eqref{inrbsde2}. 
Section \ref{S4} contains properties of integration and composition of stochastic controlled rough paths. 
In Section \ref{S5}, we show the well-posedness of BSDEs with a linear rough drift \eqref{inrbsde3}. 
An application is given to systems of rough PDEs via a Feynman-Kac formula in Section \ref{S6}. 
Appendix \ref{SA} contains some preliminary well-posedness results on BSDEs. 

\section{Preliminaries} \label{S2}
Fix a finite time horizon $T > 0$. 
Denote by $\Delta$ the simplex $\left\{\left(s, t\right): 0 \leq s \leq t \leq T\right\}$, 
and by $\Delta_{2}$ the simplex $\left\{\left(s, u, t\right): 0 \leq s \leq u \leq t \leq T\right\}$. 
For $\left(s, t\right) \in \Delta_{2}$, denote by $\mathcal{P}\left[s, t\right]$ 
the set of all partitions of the interval $\left[s, t\right]$, and by $\left|\pi\right|$ 
the mesh size of a partition $\pi \in \mathcal{P}\left[s, t\right]$. 
Let $V$ and $\bar{V}$ be two Euclidean spaces, and $\left|\cdot\right|$ be their norms. 
Denote by $\mathcal{B}\left(V\right)$ the Borel $\sigma$-field on $V$. 
The Banach space of linear maps from $V$ to $\bar{V}$, endowed with the induced norm 
$\left|F\right|:=\sup_{x \in V, \left|x\right| \leq 1}\left|F\left(x\right)\right|$, 
is denoted by $\mathcal{L}\left(V, \bar{V}\right)$. 
Denote by $V \otimes \bar{V}$ the tensor product of $V$ and $\bar{V}$. 
Then any $F \in \mathcal{L}\left(V, \mathcal{L}\left(V, \bar{V}\right)\right)$ 
can be regarded as an element in $\mathcal{L}\left(V \otimes V, \bar{V}\right)$ 
by defining $F\left(\sum x_{i} \otimes y_{i}\right):=\sum \left[F\left(x_{i}\right)\right]\left(y_{i}\right)$ 
for every $x_{i}, y_{i} \in V$. 
For a function $F: V \rightarrow \bar{V}$, define 
$\left|F\right|_{\infty}:=\sup_{x \in V}\left|F\left(x\right)\right|$. 
For $\gamma > 0$, denote by $\lfloor\gamma\rfloor$ 
the largest integer strictly smaller than $\gamma$ so that 
$\gamma=\lfloor\gamma\rfloor+\left\{\gamma\right\}$ with 
$\lfloor\gamma\rfloor \in \mathbb{N}$ and $\left\{\gamma\right\} \in \left(0, 1\right]$. 
Denote by $Lip^{\gamma}\left(V, \bar{V}\right)$ the space of $\lfloor\gamma\rfloor$ 
times continuously differentiable functions $F: V \rightarrow \bar{V}$ such that 
\begin{equation*}
	\left|F\right|_{\gamma}:=\sum _{i=0}^{\lfloor\gamma\rfloor}\left|D^{i}F\right|_{\infty}+\sup_{x_{1} \neq x_{2}} \frac{\left|D^{\lfloor\gamma\rfloor}F\left(x_{1}\right)-D^{\lfloor\gamma\rfloor}F\left(x_{2}\right)\right|}{|x_{1}-x_{2}|^{\left\{\gamma\right\}}}<\infty.
\end{equation*}
\indent
Throughout the paper, we fix $p \in \left[2, 3\right)$. 
By $a \lesssim b$, we mean that $a \leq Cb$ for some positive constant $C$ 
only depending on some universal parameters, which will be indicated in the subscripts of the relation like $a \lesssim_{M} b$ whenever the dependence is necessarily stressed. 
\subsection{$q$-variation paths} \label{S1.1}
For a path $X: \left[0, T\right] \rightarrow V$, define the increment of $X$ by 
\begin{equation*}
	\delta X: \Delta \rightarrow V, \quad \delta X_{s, t}:=X_{t}-X_{s}, \quad \forall \left(s, t\right) \in \Delta.
\end{equation*}
Similarly, for a function $A: \Delta \rightarrow V$, 
define the increment of $A$ by 
\begin{equation*}
	\delta A: \Delta_{2} \rightarrow V, \quad \delta A_{s, u, t}:=A_{s, t}-A_{s, u}-A_{u, t}, \quad \forall \left(s, u, t\right) \in \Delta_{2}.
\end{equation*}
For $q \in \left[1, \infty\right)$ and $\left(s, t\right) \in \Delta$, 
we define the $q$-variation of $A$ on the interval $\left[s, t\right]$ by 
\begin{equation*}
	\left|A\right|_{q\text{-}var;\left[s, t\right]}:=\sup_{\pi \in \mathcal{P}\left[s, t\right]}\left(\sum_{\left[u, v\right] \in \pi}\left|A_{u, v}\right|^{q}\right)^{\frac{1}{q}}.
\end{equation*}
Write $\left|A\right|_{q\text{-}var}:=\left|A\right|_{q\text{-}var;\left[0, T\right]}$. 
A path $X$ is said to be of finite $q$-variation if $\left|\delta X\right|_{q\text{-}var}$ is finite. 
The space of continuous paths of finite $q$-variation with values in $V$ 
is denoted by $C^{q\text{-}var}\left([0, T], V\right)=:C^{q\text{-}var}$. 
Any $\frac{1}{q}$-Hölder continuous path is of finite $q$-variation. 
On the other hand, there exist continuous paths which are of finite $q$-variation but not $\frac{1}{q}$-Hölder continuous (e.g. $t \mapsto t^{\beta}$ with $\beta < \frac{1}{q}$). 
See \cite{RPB2}*{Section 5} for properties of continuous paths of finite $q$-variation. 
\begin{definition} \label{rp}
	We call $\mathbf{X}=\left(X, \mathbb{X}\right)$ a two-step $p$-rough path over $\mathbb{R}^{e}$, 
	denoted by $\mathbf{X} \in \mathscr{C}^{p\text{-}var}\left([0, T], \mathbb{R}^{e}\right)$, 
	if the following are satisfied
	\begin{enumerate}[(i)]
		\item $X \in C^{p\text{-}var}\left([0, T], \mathbb{R}^{e}\right)$;
		\item $\mathbb{X}: \Delta \rightarrow \mathbb{R}^{e} \otimes \mathbb{R}^{e}$ 
		is continuous and $\left|\mathbb{X}\right|_{\frac{p}{2}\text{-}var}$ is finite;
		\item $\mathbf{X}$ satisfies Chen's relation 
		\begin{equation} \label{Chen}
			\delta \mathbb{X}_{s, u, t}=\delta X_{s, u} \otimes \delta X_{u, t}, \quad \forall \left(s, u, t\right) \in \Delta_{2}.
		\end{equation}
	\end{enumerate}
\end{definition}
For 
$\mathbf{X}=\left(X, \mathbb{X}\right), \bar{\mathbf{X}}=\left(\bar{X}, \bar{\mathbb{X}}\right) \in \mathscr{C}^{p\text{-}var}$, 
we define 
\begin{equation*}
	\left|\mathbf{X}\right|_{p\text{-}var}:=\left|\delta X\right|_{p\text{-}var}+\left|\mathbb{X}\right|_{\frac{p}{2}\text{-}var}, \quad \rho_{p\text{-}var}\left(\mathbf{X}, \bar{\mathbf{X}}\right):=\left|\delta X-\delta \bar{X}\right|_{p\text{-}var}+\left|\mathbb{X}-\bar{\mathbb{X}}\right|_{\frac{p}{2}\text{-}var};
\end{equation*}
\begin{equation*}
	\quad \left|\mathbf{X}\right|_{\mathscr{C}^{p\text{-}var}}:=\left|X_{0}\right|+\left|\mathbf{X}\right|_{p\text{-}var}, \quad \rho_{\mathscr{C}^{p\text{-}var}}\left(\mathbf{X}, \bar{\mathbf{X}}\right):=\left|X_{0}-\bar{X}_{0}\right|+\rho_{p\text{-}var}\left(\mathbf{X}, \bar{\mathbf{X}}\right). 
\end{equation*}
The metric space 
$\left(\mathscr{C}^{p\text{-}var}, \rho_{\mathscr{C}^{p\text{-}var}}\right)$ 
is complete. 
We will see later that the initial value $X_{0}$ of $X$ 
has no effect on $p$-rough stochastic integrals against $\mathbf{X}$. 
Hence, we always assume without loss of generality that $X_{0}=0$. 
Then $\rho_{p\text{-}var}$ is a true metric on the subspace 
$\left\{\mathbf{X} \in \mathscr{C}^{p\text{-}var}: X_{0}=0\right\}$. 
For a smooth path $X \in C^{\infty}\left([0, T], \mathbb{R}^{e}\right)$, 
we always define 
\begin{equation*}
	\mathbb{X}: \Delta \rightarrow \mathbb{R}^{e} \otimes \mathbb{R}^{e}, \quad \mathbb{X}_{s, t}:=\int_{s}^{t} \delta X_{s, r} \otimes \dot{X}_{r} d r, \quad \forall \left(s, t\right) \in \Delta,
\end{equation*}
and $\mathbf{X}:=\left(X, \mathbb{X}\right)$. 
Then $\mathbf{X} \in \mathscr{C}^{p\text{-}var}\left([0, T], \mathbb{R}^{e}\right)$ and we call 
$\mathbf{X}$ the canonical rough lift of the smooth path $X$. 
\begin{definition} \label{grp}
	We call $\mathbf{X}$ a two-step geometric 
	$p$-rough path over $\mathbb{R}^{e}$, denoted by 
	$\mathbf{X} \in \mathscr{C}_{g}^{0, p\text{-}var}\left([0, T], \mathbb{R}^{e}\right)$, 
	if $\mathbf{X} \in \mathscr{C}^{p\text{-}var}\left([0, T], \mathbb{R}^{e}\right)$ 
	and there exists a sequence of smooth paths 
	$X^{n} \in C^{\infty}\left([0, T], \mathbb{R}^{e}\right), n=1, 2, \cdots$, such that 
	\begin{equation*}
		\lim_{n \rightarrow \infty} \rho_{p\text{-}var}\left(\mathbf{X}^{n}, \mathbf{X}\right)=0.
	\end{equation*}
\end{definition}
\subsection{Stochastic objects} \label{S1.2}
Let $\left(\Omega, \mathcal{F}, \mathbb{P}\right)$ be a complete probability space 
carrying a $d$-dimensional Brownian motion $W$, 
and $\left\{\mathcal{F}_{t}\right\}$ be the augmentation under $\mathbb{P}$ of 
the filtration $\left\{\mathcal{F}_{t}^{W}\right\}$ generated by $W$. 
Denote by $\mathbb{E}_{t}$ the expectation operator conditioned at $\mathcal{F}_{t}$. 
For $m \in \left[1, \infty\right]$ and a subfield $\mathcal{G}$ of $\mathcal{F}$, 
the space of $\mathcal{G}$-measurable $L^{m}$-integrable random variables $\xi$ 
with values in $V$ is denoted by $L^{m}\left(\Omega, \mathcal{G}; V\right)$, 
and its norm is denoted by $\left\|\xi\right\|_{m}$. 
Write $L^{m}\left(\Omega; V\right):=L^{m}\left(\Omega, \mathcal{F}; V\right)$. 
Denote by $\mathscr{M}_{m}\left(V\right)=:\mathscr{M}_{m}$ the space of $L^{m}$-integrable martingales with values in $V$. 
The space of measurable adapted processes $F: \Omega \times \left[0, T\right] \rightarrow V$ such that 
\begin{equation*}
	\left\|F\right\|_{\mathbb{L}^{2}}:=\left(\mathbb{E}\int_{0}^{T}\left|F_{r}\right|^{2} d r\right)^{\frac{1}{2}} < \infty,
\end{equation*}
is denoted by $\mathbb{L}^{2}\left(\left[0, T\right], \Omega; V\right)=:\mathbb{L}^{2}$. 
Denote by $\mathbb{S}^{\infty}\left(\left[0, T\right], \Omega; V\right)=:\mathbb{S}^{\infty}$ the space of essentially bounded continuous adapted processes 
$F: \Omega \times \left[0, T\right] \rightarrow V$, equipped with the norm 
$\left\|F\right\|_{\mathbb{S}^{\infty}}:=\esssup_{\left(\omega, t\right)} \left|F_{r}\right|$. 
Denote by $BMO\left(\left[0, T\right], \Omega; V\right)=:BMO$ the space of processes 
$F \in \mathbb{L}^{2}\left(\left[0, T\right], \Omega; V\right)$ such that 
\begin{equation*}
	\left\|F\right\|_{BMO}:=\mathop{\esssup}\limits_{\left(\omega, t\right)}\left(\mathbb{E}_{t}\int_{t}^{T} \left|F_{r}\right|^{2} d r\right)^{\frac{1}{2}} < \infty. 
\end{equation*}
A two-parameter process $A: \Omega \times \Delta \rightarrow V$ is said to be adapted (to $\left\{\mathcal{F}_{t}\right\}$) 
if $A_{s, t}$ is $\mathcal{F}_{t}$-measurable for every $\left(s, t\right) \in \Delta$. 
Denote by $\mathbb{E}_{\cdot}A$ the two-parameter process:
$\left(s, t\right) \mapsto \mathbb{E}_{s}A_{s, t}, (s,t)\in \Delta$. 
\begin{definition}
	For $m \in \left[1, \infty\right]$ and $q \in \left[1, \infty\right)$, denote by $$C_{2}^{q\text{-}var}L_{m}\left(\left[0, T\right], \Omega; V\right)=:C_{2}^{q\text{-}var}L_{m}$$
	 the space of 
	$\mathcal{F} \times \mathcal{B}\left(\Delta\right)/\mathcal{B}\left(V\right)$-measurable 
	(or simply measurable) adapted two-parameter processes 
	$A: \Omega \times \Delta \rightarrow V$ such that 
	$A \in C\left(\Delta, L^{m}\left(\Omega; V\right)\right)$, i.e. the map 
	$\left(s, t\right) \mapsto A_{s, t}$ is continuous 
	from $\Delta$ to $L^{m}\left(\Omega; V\right)$, and 
	$\left\|A\right\|_{m,q\text{-}var}:=\left\|A\right\|_{m,q\text{-}var;\left[0, T\right]}$ 
	is finite, where 
	\begin{equation*}
		\left\|A\right\|_{m,q\text{-}var;\left[s, t\right]}:=\sup_{\pi \in \mathcal{P}\left[s, t\right]}\left(\sum_{\left[u, v\right] \in \pi}\left\|A_{u, v}\right\|_{m}^{q}\right)^{\frac{1}{q}}, \quad \forall \left(s, t\right) \in \Delta.
	\end{equation*}
\end{definition}
The normed vector space $\left(C_{2}^{q\text{-}var}L_{m}, \left\|\cdot\right\|_{m,q\text{-}var}\right)$ 
is a Banach space and $A \in C_{2}^{q\text{-}var}L_{m}$ 
implies that $A_{t,t}=0$ for every $t \in \left[0, T\right]$. 
Similarly, we denote by $C^{q\text{-}var}L_{m}\left(\left[0, T\right], \Omega; V\right)=: C^{q\text{-}var}L_{m}$ 
the space of measurable adapted processes 
$Y: \Omega \times \left[0, T\right] \rightarrow V$ such that 
$Y \in C\left(\left[0, T\right], L^{m}\left(\Omega ; V\right)\right)$ and 
$\left\|\delta Y\right\|_{m,q\text{-}var}$ is finite. 
Equipped with the norm 
\begin{equation*}
	\left\|Y\right\|_{C^{q\text{-}var}L_{m}}:=\left\|Y_{T}\right\|_{m}+\left\|\delta Y\right\|_{m,q\text{-}var},
\end{equation*}
the vector space $C^{q\text{-}var}L_{m}$ is also a Banach space and we have 
\begin{equation*}
	\sup_{t \in\left[0, T\right]}\left\|Y_{t}\right\|_{m} \leq \left\|Y\right\|_{C^{q\text{-}var}L_{m}}, 
	\quad \forall\, Y\in  C^{q\text{-}var}L_{m}.
\end{equation*}
We call a function $w : \Delta \rightarrow \left[0, \infty\right)$ a control if it is continuous and satisfies the following superadditivity
\begin{equation*}
	w\left(s, u\right)+w\left(u, t\right) \leq w\left(s, t\right), \quad \forall \left(s, u, t\right) \in \Delta_{2}.
\end{equation*}
Analogous to \cite{RPB2}*{Proposition 5.8}, for $A \in C_{2}^{q\text{-}var}L_{m}$,  the map 
\begin{equation*}
	\left(s, t\right) \mapsto \left\|A\right\|_{m,q\text{-}var;\left[s, t\right]}^{q}
\end{equation*}
defines a control.\\ 
\indent
As in many cases, we do not distinguish between a process and its modification. 
We further agree that a process $Y$ is  continuous, i.e. its sample paths  are a.s. 
continuous, if it has a continuous modification. 
Since $\left\{\mathcal{F}_{t}\right\}$ is the augmentation of 
$\left\{\mathcal{F}_{t}^{W}\right\}$, any local martingale is continuous 
(see, e.g., \cite{BMaSC}*{Theorems 1.3.13 and 3.4.15}). 

\section{BSDEs with a nonlinear rough drift} \label{S3}

Consider the following rough BSDE 
\begin{equation} \label{rbsde2}
	Y_{t}=\xi+\int_{t}^{T} f_{r}\left(Y_{r}, Z_{r}\right) d r+\int_{t}^{T} g\left(Y_{r}\right) d \mathbf{X}_{r}-\int_{t}^{T} Z_{r} d W_{r}, \quad t \in \left[0, T\right].
\end{equation}
Here, $\xi$ is an $\mathbb{R}^{k}$-valued random variable, 
$\mathbf{X}=\left(X, \mathbb{X}\right) \in \mathscr{C}_{g}^{0, p\text{-}var}\left([0, T], \mathbb{R}^{e}\right)$ 
and $\left(Y, Z\right)$ is the pair of unknown processes. The random vector field
$f: \Omega \times \left[0, T\right] \times \mathbb{R}^{k} \times \mathbb{R}^{k \times d} \rightarrow \mathbb{R}^{k}$ 
is progressively measurable, and the function 
$g: \mathbb{R}^{k} \rightarrow \mathbb{R}^{k \times e}$ 
is deterministic and time-invariant. 
For $i=1, \cdots, k$, we denote by $y^{i}$ and $z^{i}$ the $i$th component of vector $y \in \mathbb{R}^{k}$ 
and the $i$th row of matrix $z \in \mathbb{R}^{k \times d}$,  respectively, and similarly for other vectors and matrices. 
Inspired by \cite{RBSDE}*{Theorem 3}, we give the following definition. 
\begin{definition} \label{solud2} 
	We call $\left(Y, Z\right) \in \mathbb{S}^{\infty}\left(\left[0, T\right], \Omega; \mathbb{R}^{k}\right) \times BMO\left(\left[0, T\right], \Omega; \mathbb{R}^{k \times d}\right)$ 
	a solution to \eqref{rbsde2} if there exist 
	$X^{n} \in C^{\infty}\left(\left[0, T\right], \mathbb{R}^{e}\right)$ and 
	$\left(Y^{n}, Z^{n}\right) \in \mathbb{S}^{\infty} \times BMO$ for $n=1, 2, \cdots$, 
	such that $\left(Y^{n}, Z^{n}\right)$ is a solution to BSDE 
	\begin{equation} \label{bsdesequ}
		Y_{t}^{n}=\xi+\int_{t}^{T} \left(f_{r}\left(Y_{r}^{n}, Z_{r}^{n}\right)+g\left(Y_{r}^{n}\right)\dot{X}_{r}^{n}\right) d r-\int_{t}^{T} Z_{r}^{n} d W_{r}, \quad t \in \left[0, T\right]
	\end{equation}
	and 
	\begin{equation*}
		\lim_{n \rightarrow \infty}\left[\, \rho_{p\text{-}var}\left(\mathbf{X}^{n}, \mathbf{X}\right)+\left\|Y^{n}-Y\right\|_{\mathbb{S}^{\infty}}+\left\|Z^{n}-Z\right\|_{BMO}\right]=0. 
	\end{equation*}
\end{definition}
In the next two subsections, we give the solvability of rough BSDE \eqref{rbsde2} 
under the two cases: when the terminal value and the geometric rough path are small and when each component of the rough drift  $g(y)$ only depends on the corresponding  component of the first unknown variable $y$. 
\subsection{Solvability under the small terminal value condition} \label{S3.1}
We first introduce the following assumption. 
\begin{assumption} \label{assu2}
	~
	\begin{enumerate}[(i)]
		\item $\xi \in L^{\infty}\left(\Omega, \mathcal{F}_{T}; \mathbb{R}^{k}\right)$;
		\item there exist a constant $L \geq 0$ and measurable adapted processes 
		$\lambda, \mu: \Omega \times \left[0, T\right] \rightarrow \left[0, \infty\right)$ such that 
		$\int_{0}^{T} \left(\lambda_{r}+\mu_{r}^{2}\right) d r \in L^{\infty}\left(\Omega; \mathbb{R}\right)$, 
		and for every $\left(t, y, z\right) \in \left[0, T\right] \times \mathbb{R}^{k} \times \mathbb{R}^{k \times d}$, 
		\begin{equation*}
			\left|f_{t}\left(y, z\right)\right|+\left|\partial_{y}f_{t}\left(y, z\right)\right| \leq \lambda_{t}+L\left(\left|y\right|^{2}+\left|z\right|^{2}\right), \quad \left|\partial_{z}f_{t}\left(y, z\right)\right| \leq \mu_{t}+L\left(\left|y\right|+\left|z\right|\right);
		\end{equation*}
		\item $g \in Lip^{\gamma}\left(\mathbb{R}^{k}, \mathbb{R}^{k \times e}\right)$ 
		for some real number $\gamma > p+2$. 
	\end{enumerate}
\end{assumption}
We now give the following solvability of rough BSDE \eqref{rbsde2} under the small terminal value condition. 
\begin{theorem} \label{solueu2}
	Let Assumption \ref{assu2} hold. Then there exist $\varepsilon > 0$ 
	and a bounded subset $\mathcal{B}$ of $\mathbb{S}^{\infty} \times BMO$, 
	only depending on $T, L$ and $\left|g\right|_{\gamma}$, such that for 
	$\left\|\xi\right\|_{\infty} \vee \left\|\int_{0}^{T} \left(\lambda_{r}+\mu_{r}^{2}\right) d r\right\|_{\infty} \vee \left|\mathbf{X}\right|_{p\text{-}var} \leq \varepsilon$, 
	the rough BSDE \eqref{rbsde2} has a unique solution $\left(Y, Z\right) \in \mathcal{B}$. 
\end{theorem}
To prove Theorem \ref{solueu2}, we need some preliminaries. 
Let $X^{n} \in C^{\infty}\left(\left[0, T\right], \mathbb{R}^{e}\right)$, $n=1, 2, \cdots$ 
be a sequence of smooth paths such that 
\begin{equation} \label{flowcon1}
	\left|\mathbf{X}^{n}\right|_{p\text{-}var} \leq 2\left|\mathbf{X}\right|_{p\text{-}var}, \quad \forall n=1, 2, \cdots
\end{equation}
and 
\begin{equation} \label{flowcon2}
	\lim_{n \rightarrow \infty} \rho_{p\text{-}var}\left(\mathbf{X}^{n}, \mathbf{X}\right)=0.
\end{equation}
Denote by $\phi^{0}: \left[0, T\right] \times \mathbb{R}^{k} \rightarrow \mathbb{R}^{k}$ the solution flow to RDE 
\begin{equation} \label{rdeflow}
	\phi_{t}^{0}\left(y\right)=y+\int_{t}^{T} g\left(\phi_{r}^{0}\left(y\right)\right) d \mathbf{X}_{r}, \quad t \in \left[0, T\right],
\end{equation}
and by $\phi^{n}: \left[0, T\right] \times \mathbb{R}^{k} \rightarrow \mathbb{R}^{k}$ the solution flow to ODE 
\begin{equation} \label{odeflow}
	\phi_{t}^{n}\left(y\right)=y+\int_{t}^{T} g\left(\phi_{r}^{n}\left(y\right)\right)\dot{X}_{r}^{n} d r, \quad t \in \left[0, T\right],
\end{equation}
for $n=1, 2, \cdots$. Define 
$\tilde{f}^{n}: \Omega \times \left[0, T\right] \times \mathbb{R}^{k} \times \mathbb{R}^{k \times d} \rightarrow \mathbb{R}^{k}$ 
for $n=0, 1, \cdots$ by 
\begin{equation} \label{tranf}
	\tilde{f}_{t}^{n}\left(\tilde{y}, \tilde{z}\right):=\left(D\phi_{t}^{n}\left(\tilde{y}\right)\right)^{-1}\left(f_{t}\left(\phi_{t}^{n}\left(\tilde{y}\right), D\phi_{t}^{n}\left(\tilde{y}\right)\tilde{z}\right)+\frac{1}{2}D^{2}\phi_{t}^{n}\left(\tilde{y}\right)\tilde{z}^{2}\right),
\end{equation}
where 
\begin{equation*}
	D^{2}\phi_{t}^{n}\left(\tilde{y}\right)\tilde{z}^{2}:=\left(\begin{array}{c}
		\operatorname{Tr}\left[D^{2}\phi_{t}^{n, 1}\left(\tilde{y}\right)\tilde{z}\tilde{z}^{\top}\right] \\
		\vdots \\
		\operatorname{Tr}\left[D^{2}\phi_{t}^{n, k}\left(\tilde{y}\right)\tilde{z}\tilde{z}^{\top}\right]
		\end{array}\right), \quad \forall \left(t, \tilde{y}, \tilde{z}\right) \in \left[0, T\right] \times \mathbb{R}^{k} \times \mathbb{R}^{k \times d}.
\end{equation*}
Then we have the following two lemmas. 
\begin{lemma} \label{flowtran}
	Let Assumption \ref{assu2} hold. Assume that $\left|\mathbf{X}\right|_{p\text{-}var} \leq 1$. 
	Then for every $\left(t, \tilde{y}, \tilde{z}\right) \in \left[0, T\right] \times \mathbb{R}^{k} \times \mathbb{R}^{k \times d}$ 
	and $n=0, 1, \cdots$, we have 
	\begin{equation*}
		\left|\tilde{f}^{n}_{t}\left(\tilde{y}, \tilde{z}\right)\right|+\left|\partial_{\tilde{y}}\tilde{f}^{n}_{t}\left(\tilde{y}, \tilde{z}\right)\right| \lesssim_{L, \left|g\right|_{\gamma}} \lambda_{t}+\mu_{t}^{2}+\left|\mathbf{X}\right|_{p\text{-}var}+\left|\tilde{y}\right|^{2}+\left|\tilde{z}\right|^{2}
	\end{equation*}
	and
	\begin{equation*}
		\left|\partial_{\tilde{z}}\tilde{f}^{n}_{t}\left(\tilde{y}, \tilde{z}\right)\right| \lesssim_{L, \left|g\right|_{\gamma}} \mu_{t}+\left|\mathbf{X}\right|_{p\text{-}var}+\left|\tilde{y}\right|+\left|\tilde{z}\right|.
	\end{equation*}
\end{lemma}
\begin{proof}
	For every fixed $\left(t, \tilde{y}, \tilde{z}\right) \in \left[0, T\right] \times \mathbb{R}^{k} \times \mathbb{R}^{k \times d}$ 
	and $n$, we have 
	\begin{align*}
		&\left|\tilde{f}_{t}^{n}\left(\tilde{y}, \tilde{z}\right)\right|\\
		&\quad \leq \left|\left(D\phi_{t}^{n}\left(\tilde{y}\right)\right)^{-1}\right|\left(\lambda_{t}+L\left|\phi_{t}^{n}\left(\tilde{y}\right)\right|^{2}+L\left|D\phi_{t}^{n}\left(\tilde{y}\right)\right|^{2}\left|\tilde{z}\right|^{2}+\frac{1}{2}\left|D^{2}\phi_{t}^{n}\left(\tilde{y}\right)\right|\left|\tilde{z}\right|^{2}\right)\\
		&\quad \lesssim_{L} \left|\left(D\phi^{n}\right)^{-1}\right|_{\infty}\left(\lambda_{t}+\left|\phi^{n}-\tilde{y}\right|_{\infty}^{2}+\left|\tilde{y}\right|^{2}+\left(\left|D\phi^{n}\right|_{\infty}^{2}+\left|D^{2}\phi^{n}\right|_{\infty}\right)\left|\tilde{z}\right|^{2}\right).
	\end{align*}
	Since 
	\begin{align*}
		\partial_{\tilde{y}}\tilde{f}_{t}^{n}\left(\tilde{y}, \tilde{z}\right)&=\left(D\phi_{t}^{n}\left(\tilde{y}\right)\right)^{-1}\left[-D^{2}\phi_{t}^{n}\left(\tilde{y}\right)\tilde{f}_{t}^{n}\left(\tilde{y}, \tilde{z}\right)+\partial_{y}f_{t}\left(\phi_{t}^{n}\left(\tilde{y}\right), D\phi_{t}^{n}\left(\tilde{y}\right)\tilde{z}\right)D\phi_{t}^{n}\left(\tilde{y}\right)\right.\\
		&\quad+\left.\partial_{z}f_{t}\left(\phi_{t}^{n}\left(\tilde{y}\right), D\phi_{t}^{n}\left(\tilde{y}\right)\tilde{z}\right)D^{2}\phi_{t}^{n}\left(\tilde{y}\right)\tilde{z}+\frac{1}{2}D^{3}\phi_{t}^{n}\left(\tilde{y}\right)\tilde{z}^{2}\right],
	\end{align*}
	we have 
	\begin{align*}
		&\left|\partial_{\tilde{y}}\tilde{f}_{t}^{n}\left(\tilde{y}, \tilde{z}\right)\right|\\
		&\quad \leq \left|\left(D\phi_{t}^{n}\left(\tilde{y}\right)\right)^{-1}\right|\left[\left|\tilde{f}_{t}^{n}\left(\tilde{y}, \tilde{z}\right)\right|+\left(\lambda_{t}+L\left|\phi_{t}^{n}\left(\tilde{y}\right)\right|^{2}+L\left|D\phi_{t}^{n}\left(\tilde{y}\right)\right|^{2}\left|\tilde{z}\right|^{2}\right)\left|D\phi_{t}^{n}\left(\tilde{y}\right)\right|\right.\\
		&\quad \quad+\left.\left(\mu_{t}+L\left|\phi_{t}^{n}\left(\tilde{y}\right)\right|+L\left|D\phi_{t}^{n}\left(\tilde{y}\right)\right|\left|\tilde{z}\right|\right)\left|D^{2}\phi_{t}^{n}\left(\tilde{y}\right)\right|\left|\tilde{z}\right|+\frac{1}{2}\left|D^{3}\phi_{t}^{n}\left(\tilde{y}\right)\right|\left|\tilde{z}\right|^{2}\right]\\
		&\quad \lesssim_{L} \left|\left(D\phi^{n}\right)^{-1}\right|_{\infty}\left(\left(1+\left|\left(D\phi^{n}\right)^{-1}\right|_{\infty}\right)\left|D^{2}\phi^{n}\right|_{\infty}+\left|D\phi^{n}\right|_{\infty}\right)\\
    	&\quad \quad \quad \times \left(\lambda_{t}+\mu_{t}^{2}+\left|\phi^{n}-\tilde{y}\right|_{\infty}^{2}+\left|\tilde{y}\right|^{2}\right)\\
		&\quad \quad+\left|\left(D\phi^{n}\right)^{-1}\right|_{\infty}^{2}\left|D^{2}\phi^{n}\right|_{\infty}\left(\left|D\phi^{n}\right|_{\infty}^{2}+\left|D^{2}\phi^{n}\right|_{\infty}\right)\left|\tilde{z}\right|^{2}\\
		&\quad \quad+\left|\left(D\phi^{n}\right)^{-1}\right|_{\infty}\left(\left|D\phi^{n}\right|_{\infty}^{3}+\left(1+\left|D\phi^{n}\right|_{\infty}\right)\left|D^{2}\phi^{n}\right|_{\infty}+\left|D^{3}\phi^{n}\right|_{\infty}\right)\left|\tilde{z}\right|^{2}.
	\end{align*}
	In a similar way, we have 
	\begin{align*}
		\left|\partial_{\tilde{z}}\tilde{f}_{t}^{n}\left(\tilde{y}, \tilde{z}\right)\right| &\leq \left|\left(D\phi_{t}^{n}\left(\tilde{y}\right)\right)^{-1}\right|\left(\left|\partial_{z}f_{t}\left(\phi_{t}^{n}\left(\tilde{y}\right), D\phi_{t}^{n}\left(\tilde{y}\right)\tilde{z}\right)\right|\left|D\phi_{t}^{n}\left(\tilde{y}\right)\right|+\left|D^{2}\phi_{t}^{n}\left(\tilde{y}\right)\right|\left|\tilde{z}\right|\right)\\
		&\lesssim_{L} \left|\left(D\phi^{n}\right)^{-1}\right|_{\infty}\left|D\phi^{n}\right|_{\infty}\left(\mu_{t}+\left|\phi^{n}-\tilde{y}\right|_{\infty}+\left|\tilde{y}\right|\right)\\
    	&\quad+\left|\left(D\phi^{n}\right)^{-1}\right|_{\infty}\left(\left|D\phi^{n}\right|_{\infty}^{2}+\left|D^{2}\phi^{n}\right|_{\infty}\right)\left|\tilde{z}\right|.
	\end{align*}
	By \cite{RPB2}*{Theorem 10.14 and Proposition 11.11}, we have 
	\begin{equation} \label{flowe1}
		\left|\phi^{n}-\tilde{y}\right|_{\infty} \lesssim_{\left|g\right|_{\gamma}} \left|\mathbf{X}\right|_{p\text{-}var}, \quad \forall n=0, 1, \cdots
	\end{equation}
  	and 
	\begin{equation} \label{flowe2}
		\left|D\phi^{n}\right|_{\infty} \vee \left|\left(D\phi^{n}\right)^{-1}\right|_{\infty} \vee \left|D^{2}\phi^{n}\right|_{\infty} \vee \left|D^{3}\phi^{n}\right|_{\infty} \leq C_{\left|g\right|_{\gamma}}, \quad \forall n=0, 1, \cdots.
	\end{equation}
	Combining the above inequalities, we get the desired result. 
\end{proof}
\begin{lemma} \label{flowtran2}
	Under the same assumptions as Lemma \ref{flowtran}, there exist sufficiently small positive real numbers $\varepsilon$ and $\tilde{R}$, 
	only depending on $T, L$ and $\left|g\right|_{\gamma}$, such that for 
	$\left\|\xi\right\|_{\infty}\vee \left\|\int_{0}^{T} \left(\lambda_{r}+\mu_{r}^{2}\right) d r\right\|_{\infty}\vee \left|\mathbf{X}\right|_{p\text{-}var} \leq \varepsilon$, BSDE 
	\begin{equation} \label{tranbsde}
		\tilde{Y}_{t}^{n}=\xi+\int_{t}^{T} \tilde{f}_{r}^{n}\left(\tilde{Y}_{r}^{n}, \tilde{Z}_{r}^{n}\right) d r-\int_{t}^{T} \tilde{Z}_{r}^{n} d W_{r}, \quad t \in \left[0, T\right] 
	\end{equation}
	has a unique solution $\left(\tilde{Y}^{n}, \tilde{Z}^{n}\right) \in \mathbb{S}^{\infty} \times BMO$ satisfying 
	\begin{equation} \label{tranbsdesolue1}
		\left\|\tilde{Y}^{n}\right\|_{\mathbb{S}^{\infty}}+\left\|\tilde{Z}^{n}\right\|_{BMO} \leq \tilde{R},
	\end{equation}
	for $n=0, 1, \cdots$. Moreover, we have 
	\begin{equation} \label{transoluc}
		\lim_{n \rightarrow \infty}\left(\left\|\tilde{Y}^{n}-\tilde{Y}^{0}\right\|_{\mathbb{S}^{\infty}}+\left\|\tilde{Z}^{n}-\tilde{Z}^{0}\right\|_{BMO}\right)=0.
	\end{equation}
\end{lemma}
\begin{proof}
	By Lemma \ref{flowtran} and Theorem \ref{bsdesolueu}, we get the existence of 
	$\varepsilon$ and $\tilde{R}$ and the existence and uniqueness of 
	$\left(\tilde{Y}^{n}, \tilde{Z}^{n}\right), n=0, 1, \cdots$ 
	for $\left\|\xi\right\|_{\infty}\vee \left\|\int_{0}^{T} \left(\lambda_{r}+\mu_{r}^{2}\right) d r\right\|_{\infty} \leq \varepsilon$ 
	and $\left|\mathbf{X}\right|_{p\text{-}var} \leq \varepsilon$. 
	Put $\Delta_{n}\phi:=\phi^{n}-\phi^{0}$ and similarly for other functions. 
	For every fixed $\left(t, \tilde{y}, \tilde{z}\right) \in \left[0, T\right] \times \mathbb{R}^{k} \times \mathbb{R}^{k \times d}$ 
	and $n$, we have 
	\begin{align*}
		&\left|\Delta_{n}\tilde{f}_{t}\left(\tilde{y}, \tilde{z}\right)\right|\\
		&\quad \leq \left|\Delta_{n}\left(D\phi_{t}\left(\tilde{y}\right)\right)^{-1}\right|\left(\left|f_{t}\left(\phi_{t}^{n}\left(\tilde{y}\right), D\phi_{t}^{n}\left(\tilde{y}\right)\tilde{z}\right)\right|+\frac{1}{2}\left|D^{2}\phi_{t}^{n}\left(\tilde{y}\right)\right|\left|\tilde{z}\right|^{2}\right)\\
		&\quad \quad+\left|\left(D\phi_{t}^{0}\left(\tilde{y}\right)\right)^{-1}\right|\left(\left|\Delta_{n}f_{t}\left(\phi_{t}\left(\tilde{y}\right), D\phi_{t}\left(\tilde{y}\right)\tilde{z}\right)\right|+\frac{1}{2}\left|\Delta_{n}D^{2}\phi_{t}\left(\tilde{y}\right)\right|\left|\tilde{z}\right|^{2}\right)\\
		&\quad \lesssim_{L} \left|\Delta_{n}\left(D\phi_{t}\left(\tilde{y}\right)\right)^{-1}\right|\left(\lambda_{t}+\left|\phi_{t}^{n}\left(\tilde{y}\right)\right|^{2}+\left|D\phi_{t}^{n}\left(\tilde{y}\right)\right|^{2}\left|\tilde{z}\right|^{2}+\frac{1}{2}\left|D^{2}\phi_{t}^{n}\left(\tilde{y}\right)\right|\left|\tilde{z}\right|^{2}\right)\\
		&\quad \quad+\left|\left(D\phi_{t}^{0}\left(\tilde{y}\right)\right)^{-1}\right|\left|\Delta_{n}\phi_{t}\left(\tilde{y}\right)\right|\\
    	&\quad \quad \quad \times \left(\lambda_{t}+\left|\phi_{t}^{n}\left(\tilde{y}\right)\right|^{2}+\left|\phi_{t}^{0}\left(\tilde{y}\right)\right|^{2}+\left|D\phi_{t}^{n}\left(\tilde{y}\right)\right|^{2}\left|\tilde{z}\right|^{2}+\left|D\phi_{t}^{0}\left(\tilde{y}\right)\right|^{2}\left|\tilde{z}\right|^{2}\right)\\
		&\quad \quad+\left|\left(D\phi_{t}^{0}\left(\tilde{y}\right)\right)^{-1}\right|\left|\Delta_{n}D\phi_{t}\left(\tilde{y}\right)\right|\left|\tilde{z}\right|\\
    	&\quad \quad \quad \times \left(\mu_{t}+\left|\phi_{t}^{n}\left(\tilde{y}\right)\right|+\left|\phi_{t}^{0}\left(\tilde{y}\right)\right|+\left|D\phi_{t}^{n}\left(\tilde{y}\right)\right|\left|\tilde{z}\right|+\left|D\phi_{t}^{0}\left(\tilde{y}\right)\right|\left|\tilde{z}\right|\right)\\
		&\quad \quad+\frac{1}{2}\left|\left(D\phi_{t}^{0}\left(\tilde{y}\right)\right)^{-1}\right|\left|\Delta_{n}D^{2}\phi_{t}\left(\tilde{y}\right)\right|\left|\tilde{z}\right|^{2}\\
		&\quad \lesssim_{L} \left(\left|\Delta_{n}\left(D\phi\right)^{-1}\right|_{\infty}+\left|\left(D\phi^{0}\right)^{-1}\right|_{\infty}\left(\left|\Delta_{n}\phi\right|_{\infty}+\left|\Delta_{n}D\phi\right|_{\infty}\right)\right)\\
		&\quad \quad \quad \times \left(\lambda_{t}+\mu_{t}^{2}+\left|\phi^{n}-\tilde{y}\right|_{\infty}^{2}+\left|\phi^{0}-\tilde{y}\right|_{\infty}^{2}+\left|\tilde{y}\right|^{2}\right)\\
		&\quad \quad+\left(\left|\Delta_{n}\left(D\phi\right)^{-1}\right|_{\infty}+\left|\left(D\phi_{t}^{0}\left(\tilde{y}\right)\right)^{-1}\right|\left(\left|\Delta_{n}\phi\right|_{\infty}+\left|\Delta_{n}D\phi\right|_{\infty}+\left|\Delta_{n}D^{2}\phi\right|_{\infty}\right)\right)\\
		&\quad \quad \quad \times \left(1+\left|D\phi^{n}\right|_{\infty}^{2}+\left|D\phi^{0}\right|_{\infty}^{2}+\left|D^{2}\phi^{n}\right|_{\infty}\right)\left|\tilde{z}\right|^{2}.
	\end{align*}
	Then 
	\begin{align} 
		&\mathbb{E}_{t}\int_{t}^{T} \left|\left(\Delta_{n}\tilde{f}\right)_{r}\left(\tilde{Y}_{r}^{0}, \tilde{Z}_{r}^{0}\right)\right| d r \notag\\
		&\quad \lesssim_{L} \left(\left|\Delta_{n}\left(D\phi\right)^{-1}\right|_{\infty}+\left|\left(D\phi^{0}\right)^{-1}\right|_{\infty}\left(\left|\Delta_{n}\phi\right|_{\infty}+\left|\Delta_{n}D\phi\right|_{\infty}\right)\right) \notag\\
		&\quad \quad \quad \times \left(\left\|\int_{0}^{T} \left(\lambda_{r}+\mu_{r}^{2}\right) d r\right\|_{\infty}+T\left|\phi^{n}-\tilde{y}\right|_{\infty}^{2}+T\left|\phi^{0}-\tilde{y}\right|_{\infty}^{2}+T\left\|\tilde{Y}^{0}\right\|_{\mathbb{S}^{\infty}}^{2}\right) \label{convere1}\\ 
		&\quad \quad+\left(\left|\Delta_{n}\left(D\phi\right)^{-1}\right|_{\infty}+\left|\left(D\phi_{t}^{0}\left(\tilde{y}\right)\right)^{-1}\right|\left(\left|\Delta_{n}\phi\right|_{\infty}+\left|\Delta_{n}D\phi\right|_{\infty}+\left|\Delta_{n}D^{2}\phi\right|_{\infty}\right)\right) \notag\\
		&\quad \quad \quad \times \left(1+\left|D\phi^{n}\right|_{\infty}^{2}+\left|D\phi^{0}\right|_{\infty}^{2}+\left|D^{2}\phi^{n}\right|_{\infty}\right)\left\|\tilde{Z}^{0}\right\|_{BMO}^{2}. \notag
	\end{align}
	By the classical RDE theory (e.g. \cite{RPB2}*{Theorem 11.12}), we have 
	\begin{equation} \label{convere2}
		\lim_{n \rightarrow \infty}\left(\left|\Delta_{n}\phi\right|_{\infty}+\left|\Delta_{n}D\phi\right|_{\infty}+\left|\Delta_{n}\left(D\phi\right)^{-1}\right|_{\infty}+\left|\Delta_{n}D^{2}\phi\right|_{\infty}\right)=0.
	\end{equation}
	Combining \eqref{flowe1}, \eqref{flowe2}, \eqref{convere1} and \eqref{convere2}, we have 
	\begin{equation*} 
		\lim_{n \rightarrow \infty}\mathop{\esssup}\limits_{\left(\omega, t\right)}\mathbb{E}_{t}\int_{t}^{T} \left|\left(\Delta_{n}\tilde{f}\right)_{r}\left(\tilde{Y}_{r}^{0}, \tilde{Z}_{r}^{0}\right)\right| d r=0.
	\end{equation*}
	Then by Theorem \ref{bsdesolumc}, we get \eqref{transoluc}. 
\end{proof}
We are now ready to prove Theorem \ref{solueu2}. 
\begin{proof}[Proof of Theorem \ref{solueu2}]
	We first show the existence when 
	$\left\|\xi\right\|_{\infty}, \left\|\int_{0}^{T} \left(\lambda_{r}+\mu_{r}^{2}\right) d r\right\|_{\infty}$ and 
	$\left|\mathbf{X}\right|_{p\text{-}var}$ are sufficiently small. 
	Take $X^{n} \in C^{\infty}\left(\left[0, T\right], \mathbb{R}^{e}\right), n=1, 2, \cdots$ 
	such that \eqref{flowcon1} and \eqref{flowcon2} hold. By Lemma \ref{flowtran2}, 
	there exist sufficiently small positive real numbers $\varepsilon$ and $\tilde{R}$, 
	only depending on $T, L$ and $\left|g\right|_{\gamma}$, such that for 
	$\left\|\xi\right\|_{\infty} \leq \varepsilon$ and $\left\|\int_{0}^{T} \left(\lambda_{r}+\mu_{r}^{2}\right) d r\right\|_{\infty}\vee \left|\mathbf{X}\right|_{p\text{-}var} \leq \varepsilon$, 
	BSDE \eqref{tranbsde} has a unique solution 
	$\left(\tilde{Y}^{n}, \tilde{Z}^{n}\right) \in \mathbb{S}^{\infty} \times BMO$ 
	satisfying \eqref{tranbsdesolue1} for $n=0, 1, \cdots$, 
	and \eqref{transoluc} holds. Define 
	\begin{equation} \label{Yd}
		Y^{n}_{t}:=\phi_{t}^{n}\left(\tilde{Y}^{n}_{t}\right), \quad Z^{n}_{t}:=D\phi_{t}^{n}\left(\tilde{Y}^{n}_{t}\right)\tilde{Z}^{n}_{t}, \quad \forall t \in \left[0, T\right], \quad \forall n=0, 1, \cdots.
	\end{equation}
	Since 
	\begin{equation*}
		\left\|Y^{n}\right\|_{\mathbb{S}^{\infty}} \leq \left|\phi^{n}-\tilde{y}\right|_{\infty}+\left\|\tilde{Y}^{n}\right\|_{\mathbb{S}^{\infty}}, \quad \left\|Z^{n}\right\|_{BMO} \leq \left|D\phi^{n}\right|_{\infty}\left\|\tilde{Z}^{n}\right\|_{BMO},
	\end{equation*}
	we have $\left(Y^{n}, Z^{n}\right) \in \mathbb{S}^{\infty} \times BMO$. 
	For $n=1, 2, \cdots$, applying Itô's formula, we have 
	\begin{align*}
		Y_{t}^{n}&=\xi-\int_{t}^{T}\left(\partial_{t}\phi_{r}^{n}\left(\tilde{Y}_{r}^{n}\right)-D\phi_{r}^{n}\left(\tilde{Y}_{r}^{n}\right)\tilde{f}_{r}^{n}\left(\tilde{Y}_{r}^{n}, \tilde{Z}_{r}^{n}\right)+\frac{1}{2}D^{2}\phi_{r}^{n}\left(\tilde{Y}_{r}^{n}\right)\left(\tilde{Z}_{r}^{n}\right)^{2}\right) d r\\
		&\quad-\int_{t}^{T} D\phi_{r}^{n}\left(\tilde{Y}_{r}^{n}\right)\tilde{Z}_{r}^{n} d W_{r}\\
		&=\xi+\int_{t}^{T}\left(f_{t}\left(\phi_{t}^{n}\left(\tilde{Y}_{r}^{n}\right), D\phi_{t}^{n}\left(\tilde{Y}_{r}^{n}\right)\tilde{Z}_{r}^{n}\right)+g\left(\phi_{r}^{n}\left(\tilde{Y}_{r}^{n}\right)\right)\dot{X}_{r}^{n}\right) d r\\
    	&\quad-\int_{t}^{T} D\phi_{r}^{n}\left(\tilde{Y}_{r}^{n}\right)\tilde{Z}_{r}^{n} d W_{r}\\
		&=\xi+\int_{t}^{T} \left(f_{r}\left(Y_{r}^{n}, Z_{r}^{n}\right)+g\left(Y_{r}^{n}\right)\dot{X}_{r}^{n}\right) d r-\int_{t}^{T} Z_{r}^{n} d W_{r}, \quad \forall t \in \left[0, T\right].
	\end{align*}
	Hence, $\left(Y^{n}, Z^{n}\right)$ is a solution to BSDE \eqref{bsdesequ} 
	for $n=1, 2, \cdots$. Put $\Delta_{n}Y:=Y^{n}-Y^{0}$ 
	and similarly for other functions. Note that 
	\begin{align*}
		\left\|\Delta_{n}Y\right\|_{\mathbb{S}^{\infty}} &\leq \left\|\left(\Delta_{n}\phi\right)\left(\tilde{Y}^{n}\right)\right\|_{\mathbb{S}^{\infty}}+\left\|\phi^{0}\left(\tilde{Y}^{n}\right)-\phi^{0}\left(\tilde{Y}^{0}\right)\right\|_{\mathbb{S}^{\infty}}\\
		&\leq \left|\Delta_{n}\phi\right|_{\infty}+\left|D\phi^{0}\right|_{\infty}\left\|\Delta_{n}\tilde{Y}\right\|_{\mathbb{S}^{\infty}}
	\end{align*}
  	and 
	\begin{align*}
		\left\|\Delta_{n}Z\right\|_{BMO} &\leq \left\|D\phi^{n}\left(\tilde{Y}^{n}\right)\right\|_{\mathbb{S}^{\infty}}\left\|\Delta_{n}\tilde{Z}\right\|_{BMO}+\left\|\Delta_{n}\left[D\phi\left(\tilde{Y}\right)\right]\right\|_{\mathbb{S}^{\infty}}\left\|\tilde{Z}^{0}\right\|_{BMO}\\
		&\leq \left|D\phi^{n}\right|_{\infty}\left\|\Delta_{n}\tilde{Z}\right\|_{BMO}\\
		&\quad+\left(\left|\Delta_{n}D\phi\right|_{\infty}+\left|D^{2}\phi^{0}\right|_{\infty}\left\|\Delta_{n}\tilde{Y}\right\|_{\mathbb{S}^{\infty}}\right)\left\|\tilde{Z}^{0}\right\|_{BMO},
	\end{align*}
	for $n=0, 1, \cdots$. 
	Combining \eqref{flowe2}, \eqref{transoluc}, \eqref{convere2} 
	and the last two inequalities, we have 
	\begin{equation*}
		\lim_{n \rightarrow \infty}\left(\left\|\Delta_{n}Y\right\|_{\mathbb{S}^{\infty}}+\left\|\Delta_{n}Z\right\|_{BMO}\right)=0.
	\end{equation*}
	Therefore, $\left(Y^{0}, Z^{0}\right)$ is a solution to rough BSDE \eqref{rbsde2}. Define 
	\begin{equation*}
		\mathcal{B}:=\left\{\left(Y, Z\right) \in \mathbb{S}^{\infty} \times BMO : \left\|\psi^{0}\left(Y\right)\right\|_{\mathbb{S}^{\infty}}+\left\|D\psi^{0}\left(Y\right)Z\right\|_{BMO} \leq \tilde{R}\right\}, 
	\end{equation*}
	where $\psi^{0}$ is the $y$-inverse of $\phi^{0}$. In view of \eqref{Yd}, we have 
	\begin{equation*}
		\tilde{Y}_{t}^{0}=\psi_{t}^{0}\left(Y_{t}^{0}\right), \quad \tilde{Z}_{t}^{0}:=D\psi_{t}^{0}\left(Y_{t}^{0}\right)Z_{t}^{0}, \quad \forall t \in \left[0, T\right], 
	\end{equation*}
	and thus $\left(Y^{0}, Z^{0}\right) \in \mathcal{B}$. 
	\\
	\indent
	We now show the uniqueness of solution in $\mathcal{B}$ when 
	$\left\|\xi\right\|_{\infty}, \left\|\int_{0}^{T} \left(\lambda_{r}+\mu_{r}^{2}\right) d r\right\|_{\infty}$ and 
	$\left|\mathbf{X}\right|_{p\text{-}var}$ are sufficiently small. 
	Assume $\left(\bar{Y}^{0}, \bar{Z}^{0}\right) \in \mathcal{B}$ is another solution. 
	Then there exist $\bar{X}^{n} \in C^{\infty}\left(\left[0, T\right], \mathbb{R}^{e}\right)$ 
	and $\left(\bar{Y}^{n}, \bar{Z}^{n}\right) \in \mathbb{S}^{\infty} \times BMO$ for 
	$n=1, 2, \cdots$, such that $\left(\bar{Y}^{n}, \bar{Z}^{n}\right)$ is a solution to BSDE 
	\begin{equation*}
		\bar{Y}_{t}^{n}=\xi+\int_{t}^{T} \left(f_{r}\left(\bar{Y}_{r}^{n}, \bar{Z}_{r}^{n}\right)+g\left(\bar{Y}_{r}^{n}\right)\dot{\bar{X}}_{r}^{n}\right) d r-\int_{t}^{T} \bar{Z}_{r}^{n} d W_{r}, \quad t \in \left[0, T\right]
	\end{equation*}
	and 
	\begin{equation*}
		\lim_{n \rightarrow \infty}\left(\rho_{p\text{-}var}\left(\bar{\mathbf{X}}^{n}, \mathbf{X}\right)+\left\|\bar{Y}^{n}-\bar{Y}\right\|_{\mathbb{S}^{\infty}}+\left\|\bar{Z}^{n}-\bar{Z}\right\|_{BMO}\right)=0. 
	\end{equation*}
	Assume without loss of generality that 
	\begin{equation*}
		\left|\bar{\mathbf{X}}^{n}\right|_{p\text{-}var} \leq 2\left|\mathbf{X}\right|_{p\text{-}var}, \quad \forall n=1, 2, \cdots.
	\end{equation*}
  	For $n=1, 2, \cdots$, define $\bar{\phi}^{n}$ and $\tilde{\bar{f}}^{n}$ like defining $\phi^{n}$ and $\tilde{f}^{n}$ 
	by \eqref{odeflow} and \eqref{tranf}, and let $\bar{\psi}^{n}$ be the $y$-inverse of $\bar{\phi}^{n}$. 
	Write $\bar{\phi}^{0}:=\phi^{0}$, $\bar{\psi}^{0}:=\psi^{0}$ and $\tilde{\bar{f}}^{0}:=\tilde{f}^{0}$ for the sake of unified notation. 
	Define 
	\begin{equation*}
		\tilde{\bar{Y}}^{n}_{t}:=\bar{\psi}_{t}^{n}\left(\bar{Y}^{n}_{t}\right), \quad \tilde{\bar{Z}}^{n}_{t}:=D\bar{\psi}_{t}^{n}\left(\bar{Y}^{n}_{t}\right)\bar{Z}^{n}_{t}, \quad \forall t \in \left[0, T\right], \quad \forall n=0, 1, \cdots.
	\end{equation*}
	Then we have 
	\begin{equation} \label{tranbsdesolue2}
		\left\|\tilde{\bar{Y}}^{0}\right\|_{\mathbb{S}^{\infty}}+\left\|\tilde{\bar{Z}}^{0}\right\|_{BMO} \leq \tilde{R}. 
	\end{equation}
	Analogous to the proof of the existence, 
	$\left(\tilde{\bar{Y}}^{n}, \tilde{\bar{Z}}^{n}\right)$ is a solution to BSDE 
	\begin{equation*}
		\tilde{\bar{Y}}_{t}^{n}=\xi+\int_{t}^{T} \tilde{\bar{f}}_{r}^{n}\left(\tilde{\bar{Y}}_{r}^{n}, \tilde{\bar{Z}}_{r}^{n}\right) d r-\int_{t}^{T} \tilde{\bar{Z}}_{r}^{n} d W_{r}, \quad t \in \left[0, T\right],
	\end{equation*}
	for $n=1, 2, \cdots$, and we have 
	\begin{equation*}
		\lim_{n \rightarrow \infty}\left(\left\|\Delta_{n}\tilde{\bar{Y}}\right\|_{\mathbb{S}^{\infty}}+\left\|\Delta_{n}\tilde{\bar{Z}}\right\|_{BMO}\right)=0.
	\end{equation*}
	On the other hand, analogous to the proof of Lemma \ref{flowtran2}, we have 
	\begin{equation*} 
		\lim_{n \rightarrow \infty}\mathop{\esssup}\limits_{\left(\omega, t\right)}\mathbb{E}_{t}\int_{t}^{T} \left|\left(\Delta_{n}\tilde{\bar{f}}\right)_{r}\left(\tilde{\bar{Y}}_{r}^{0}, \tilde{\bar{Z}}_{r}^{0}\right)\right| d r=0.
	\end{equation*}
	Then $\left(\tilde{\bar{Y}}^{0}, \tilde{\bar{Z}}^{0}\right)$ is a solution to BSDE 
	\begin{equation*}
		\tilde{\bar{Y}}_{t}^{0}=\xi+\int_{t}^{T} \tilde{f}_{r}^{0}\left(\tilde{\bar{Y}}_{r}^{0}, \tilde{\bar{Z}}_{r}^{0}\right) d r-\int_{t}^{T} \tilde{\bar{Z}}_{r}^{0} d W_{r}, \quad t \in \left[0, T\right],
	\end{equation*}
	satisfying the estimate \eqref{tranbsdesolue2}. 
	By Theorem \ref{bsdesolueu}, we have $\left(\tilde{\bar{Y}}^{0}, \tilde{\bar{Z}}^{0}\right)=\left(\tilde{Y}^{0}, \tilde{Z}^{0}\right)$ 
	and thus $\left(\bar{Y}^{0}, \bar{Z}^{0}\right)=\left(Y^{0}, Z^{0}\right)$. 
\end{proof}
\subsection{Solvability when $g^{i}$ only depends on $y^{i}$} \label{S3.2}
To show the local existence and uniqueness of the solution to rough BSDE \eqref{rbsde2}, 
we need the following assumption. 
\begin{assumption} \label{assu3}
	~
	\begin{enumerate}[(i)]
		\item $\xi \in L^{\infty}\left(\Omega, \mathcal{F}_{T}; \mathbb{R}^{k}\right)$;
		\item there exist constants $\kappa, \theta \in \left[0, 1\right)$ such that 
		for every $\left(t, y, z\right) \in \left[0, T\right] \times \mathbb{R}^{k} \times \mathbb{R}^{k \times d}$ 
		and $i=1, \cdots, k$, 
		\begin{equation*}
			\left|f_{t}^{i}\left(y, z\right)\right| \lesssim 1+\left|y\right|^{2}+\left|z^{i}\right|^{2}+\sum_{j \neq i}\left|z^{j}\right|^{1+\theta}, \quad \left|\partial_{y}f_{t}^{i}\left(y, z\right)\right| \lesssim 1+\left|y\right|^{2}+\kappa\left|z\right|^{2},
		\end{equation*}
		\begin{equation*}
			\left|\partial_{z^{i}}f_{t}^{i}\left(y, z\right)\right| \lesssim 1+\left|y\right|+\left|z\right|, \quad \left|\partial_{z^{j}}f_{t}^{i}\left(y, z\right)\right| \lesssim 1+\left|y\right|+\left|z\right|^{\theta}, \quad \forall j \neq i;
		\end{equation*}
		\item $g \in Lip^{\gamma}\left(\mathbb{R}^{k}, \mathbb{R}^{k \times e}\right)$ 
		for some real number $\gamma > p+2$, and $g^{i}$ only depends on $y^{i}$ for $i=1, \cdots, k$.
	\end{enumerate}
\end{assumption}
The following assumption is further required to ensure the global existence and uniqueness of the solution. 
\begin{assumption} \label{assu4}
	For every $\left(t, y, z\right) \in \left[0, T\right] \times \mathbb{R}^{k} \times \mathbb{R}^{k \times d}$ 
	and $i=1, \cdots, k$, 
	\begin{equation*}
		\left|f_{t}^{i}\left(y, z\right)\right| \lesssim 1+\left|y\right|+\left|z^{i}\right|^{2}.
	\end{equation*}
\end{assumption}
We now give the following solvability result for rough BSDE \eqref{rbsde2}. 
\begin{theorem} \label{solueu3}
	Let Assumption \ref{assu3} hold for $\kappa$ sufficiently small 
	(where the upper bound only depends on $\left\|\xi\right\|_{\infty}$ and $\left|g\right|_{\gamma}$). 
	Then there exist $\varepsilon > 0$ and a bounded subset $\mathcal{B}_{\varepsilon}$ of 
	$\mathbb{S}^{\infty}\left(\left[T-\varepsilon, T\right], \Omega; \mathbb{R}^{k}\right) \times BMO\left(\left[T-\varepsilon, T\right], \Omega; \mathbb{R}^{k \times d}\right)$, 
	only depending on $\left\|\xi\right\|_{\infty}, \left|g\right|_{\gamma}$ and $\left|\mathbf{X}\right|_{p\text{-}var}$, 
	such that the rough BSDE \eqref{rbsde2} has a unique local solution $\left(Y, Z\right)$ 
	on the time interval $\left[T-\varepsilon, T\right]$ with $\left(Y, Z\right) \in \mathcal{B}_{\varepsilon}$. 
	If Assumption \ref{assu4} holds additionally, then the rough BSDE \eqref{rbsde2} has a 
	unique global solution on $\left[0, T\right]$. 
\end{theorem}
We prove the local existence and uniqueness result, 
using  Lemmas \ref{flowtran2.1} and \ref{flowtran2.2} below instead of Lemmas \ref{flowtran} and \ref{flowtran2}, in an analogous way to that of  Theorem \ref{solueu2}.  
We prove the further global existence and uniqueness result under additional Assumption \ref{assu4}, 
using a similar argument  to that of Diehl and Friz \cite{RBSDE}*{Theorem 3} 
together with the a prior estimate of Fan et al. \cite{DQBSDE2}*{Lemma 4.1}.  We omit here the detailed proof of Theorem \ref{solueu3}.  Concerning functions $X^{n}, \phi^{n}$ and $\tilde{f}^{n}$ defined in Section \ref{S3.1}, we have 
\begin{lemma} \label{flowtran2.1}
	Let Assumption \ref{assu3} hold. Assume that $\left|\mathbf{X}\right|_{p\text{-}var} \leq 1$. 
	Then for every $\tilde{\kappa} > 0$, there exists $\varepsilon_{0} > 0$ 
	only depending on $\left|g\right|_{\gamma}$ 
	such that for every $\left(t, \tilde{y}, \tilde{z}\right) \in \left[T-\varepsilon_{0}, T\right] \times \mathbb{R}^{k} \times \mathbb{R}^{k \times d}$, 
	$i=1, \cdots, k$ and $n=0, 1, \cdots$, we have 
	\begin{equation*}
		\left|\tilde{f}_{t}^{n, i}\left(\tilde{y}, \tilde{z}\right)\right| \lesssim_{\left|g\right|_{\gamma}} 1+\left|\tilde{y}\right|^{2}+\left|\tilde{z}^{i}\right|^{2}+\sum_{j \neq i}\left|\tilde{z}^{j}\right|^{1+\theta},
	\end{equation*}
	\begin{equation*}
		\left|\partial_{\tilde{y}}\tilde{f}_{t}^{n, i}\left(\tilde{y}, \tilde{z}\right)\right| \lesssim_{\left|g\right|_{\gamma}} 1+\left|\tilde{y}\right|^{2}+\left(\kappa+\tilde{\kappa}\right)\left|\tilde{z}\right|^{2},
	\end{equation*}
	\begin{equation*}
		\left|\partial_{\tilde{z}^{i}}\tilde{f}_{t}^{n, i}\left(\tilde{y}, \tilde{z}\right)\right| \lesssim_{\left|g\right|_{\gamma}} 1+\left|\tilde{y}\right|+\left|\tilde{z}\right|, \quad \left|\partial_{\tilde{z}^{j}}\tilde{f}_{t}^{n, i}\left(\tilde{y}, \tilde{z}\right)\right| \lesssim_{\left|g\right|_{\gamma}} 1+\left|\tilde{y}\right|+\left|\tilde{z}\right|^{\theta}, \quad \forall j \neq i;
	\end{equation*}
	Moreover, if Assumption \ref{assu4} holds additionally, then for every $\left(t, \tilde{y}, \tilde{z}\right) \in \left[0, T\right] \times \mathbb{R}^{k} \times \mathbb{R}^{k \times d}$, 
	$i=1, \cdots, k$ and $n=0, 1, \cdots$, we have 
	\begin{equation*}
		\left|\tilde{f}_{t}^{n, i}\left(\tilde{y}, \tilde{z}\right)\right| \lesssim_{\left|g\right|_{\gamma}} 1+\left|\tilde{y}\right|+\left|\tilde{z}^{i}\right|^{2}.
	\end{equation*}
\end{lemma}
\begin{proof}
	Fix $i, n$ and $\left(t, \tilde{y}, \tilde{z}\right) \in \left[0, T\right] \times \mathbb{R}^{k} \times \mathbb{R}^{k \times d}$. 
	Since $g^{i}$ only depends on $y^{i}$, we can solve the RDE \eqref{rdeflow} and ODE \eqref{odeflow} component by component. 
	Then we have $\phi_{t}^{n, i}\left(y\right)=\phi_{t}^{n, i}\left(y^{i}\right)$ and 
	\begin{align*}
		&f_{t}^{i}\left(\phi_{t}^{n}\left(\tilde{y}\right), D\phi_{t}^{n}\left(\tilde{y}\right)\tilde{z}\right)\\
		&\quad=f_{t}^{i}\left(\phi_{t}^{n, 1}\left(\tilde{y}^{1}\right), \cdots, \phi_{t}^{n, k}\left(y^{k}\right), D\phi_{t}^{n, 1}\left(\tilde{y}^{1}\right)\tilde{z}^{1}, \cdots, D\phi_{t}^{n, k}\left(\tilde{y}^{k}\right)\tilde{z}^{k}\right).
	\end{align*}
	In view of \eqref{tranf}, we have 
	\begin{equation*}
		\tilde{f}_{t}^{n, i}\left(\tilde{y}, \tilde{z}\right)=\frac{1}{D\phi_{t}^{n, i}\left(\tilde{y}^{i}\right)}\left(f_{t}^{i}\left(\phi_{t}^{n}\left(\tilde{y}\right), D\phi_{t}^{n}\left(\tilde{y}\right)\tilde{z}\right)+\frac{1}{2}D^{2}\phi_{t}^{n, i}\left(\tilde{y}^{i}\right)\left|\tilde{z}^{i}\right|^{2}\right).
	\end{equation*}
	Then we have 
	\begin{align*}
		&\partial_{\tilde{y}^{i}}\tilde{f}_{t}^{n, i}\left(\tilde{y}, \tilde{z}\right)\\
		&\quad =\frac{1}{D\phi_{t}^{n, i}\left(\tilde{y}^{i}\right)}\left[-D^{2}\phi_{t}^{n, i}\left(\tilde{y}^{i}\right)\tilde{f}_{t}^{n, i}\left(\tilde{y}, \tilde{z}\right)+\partial_{y^{i}}f_{t}^{i}\left(\phi_{t}^{n}\left(\tilde{y}\right), D\phi_{t}^{n}\left(\tilde{y}\right)\tilde{z}\right)D\phi_{t}^{n, i}\left(\tilde{y}^{i}\right)\right.\\
		&\quad \quad+\left.\partial_{z^{i}}f_{t}^{i}\left(\phi_{t}^{n}\left(\tilde{y}\right), D\phi_{t}^{n}\left(\tilde{y}\right)\tilde{z}\right)D^{2}\phi_{t}^{n, i}\left(\tilde{y}^{i}\right)\tilde{z}^{i}+\frac{1}{2}D^{3}\phi_{t}^{n, i}\left(\tilde{y}^{i}\right)\left|\tilde{z}^{i}\right|^{2}\right],
	\end{align*}
	\begin{align*}
		\partial_{\tilde{y}^{j}}\tilde{f}_{t}^{n, i}\left(\tilde{y}, \tilde{z}\right)&=\frac{1}{D\phi_{t}^{n, i}\left(\tilde{y}^{i}\right)}\left[\partial_{y^{j}}f_{t}^{i}\left(\phi_{t}^{n}\left(\tilde{y}\right), D\phi_{t}^{n}\left(\tilde{y}\right)\tilde{z}\right)D\phi_{t}^{n, j}\left(\tilde{y}^{j}\right)\right.\\
		&\quad+\left.\partial_{z^{j}}f_{t}^{i}\left(\phi_{t}^{n}\left(\tilde{y}\right), D\phi_{t}^{n}\left(\tilde{y}\right)\tilde{z}\right)D^{2}\phi_{t}^{n, j}\left(\tilde{y}^{j}\right)\tilde{z}^{j}\right], \quad j \neq i,
	\end{align*}
	\begin{equation*}
		\partial_{\tilde{z}^{i}}\tilde{f}_{t}^{n, i}\left(\tilde{y}, \tilde{z}\right)=\frac{1}{D\phi_{t}^{n, i}\left(\tilde{y}^{i}\right)}\left(\partial_{\tilde{z}^{i}}f_{t}^{i}\left(\phi_{t}^{n}\left(\tilde{y}\right), D\phi_{t}^{n}\left(\tilde{y}\right)\tilde{z}\right)D\phi_{t}^{n, i}\left(\tilde{y}^{i}\right)+D^{2}\phi_{t}^{n, i}\left(\tilde{y}^{i}\right)\tilde{z}^{i}\right),
	\end{equation*}
	\begin{equation*}
		\partial_{\tilde{z}^{j}}\tilde{f}_{t}^{n, i}\left(\tilde{y}, \tilde{z}\right)=\frac{1}{D\phi_{t}^{n, i}\left(\tilde{y}^{i}\right)}\partial_{\tilde{z}^{j}}f_{t}^{i}\left(\phi_{t}^{n}\left(\tilde{y}\right), D\phi_{t}^{n}\left(\tilde{y}\right)\tilde{z}\right)D\phi_{t}^{n, j}\left(\tilde{y}^{j}\right), \quad j \neq i.
	\end{equation*}
	By \cite{RBSDE}*{Lemma B.1}, for every $\tilde{\kappa} > 0$, there exists $\varepsilon_{0} > 0$ 
	only depending on $\left|g\right|_{\gamma}$ such that for every $\left(t, \tilde{y}\right) \in \left[T-\varepsilon_{0}, T\right] \times \mathbb{R}^{k}$ 
	and $n=0, 1, \cdots$, we have 
	\begin{equation} \label{flowe2.1}
		\left|\phi_{t}^{n}\left(\tilde{y}\right)-\tilde{y}\right| \vee \left|D^{2}\phi_{t}^{n}\left(\tilde{y}\right)\right| \vee \left|D^{3}\phi_{t}^{n}\left(\tilde{y}\right)\right| \leq \tilde{\kappa}.
	\end{equation}
	Analogous to the proof of Lemma \ref{flowtran}, we have the desired result. 
\end{proof}
\begin{lemma} \label{flowtran2.2}
	Let Assumption \ref{assu3} hold for $\kappa$ sufficiently small 
	(where the upper bound only depends on $\left\|\xi\right\|_{\infty}$ and $\left|g\right|_{\gamma}$). 
	Then there exist $\varepsilon > 0$ and a bounded subset $\tilde{\mathcal{B}}_{\varepsilon}$ of 
	$\mathbb{S}^{\infty}\left(\left[T-\varepsilon, T\right], \Omega; \mathbb{R}^{k}\right) \times BMO\left(\left[T-\varepsilon, T\right], \Omega; \mathbb{R}^{k \times d}\right)$, 
	only depending on $\left\|\xi\right\|_{\infty}, \left|g\right|_{\gamma}$ and $\left|\mathbf{X}\right|_{p\text{-}var}$, 
	such that for $n=0, 1, \cdots$, the BSDE \eqref{tranbsde} has a unique local solution 
	$\left(\tilde{Y}^{n}, \tilde{Z}^{n}\right)$ on the time interval $\left[T-\varepsilon, T\right]$ 
	with $\left(\tilde{Y}^{n}, \tilde{Z}^{n}\right) \in \tilde{\mathcal{B}}_{\varepsilon}$. 
	Moreover, \eqref{transoluc} also holds. 
\end{lemma}
\begin{proof}
	Assume without loss of generality that $\left|\mathbf{X}\right|_{p\text{-}var} \leq 1$. 
	By Lemma \ref{flowtran2.1}, for every $\tilde{\kappa} > 0$, there exists $\varepsilon_{0} > 0$ 
	such that when restricted on the time interval $\left[T-\varepsilon_{0}, T\right]$, 
	$\tilde{f}^{n}$ satisfies Assumption \ref{assu3} but replacing $\kappa$ by $\kappa+\tilde{\kappa}$, 
	for $n=0, 1, \cdots$. 
	Then we can choose $\varepsilon_{0}$ to make $\kappa+\tilde{\kappa}$ sufficiently small, 
	and thus by Theorem \ref{bsdesolueu2}, we get the existence and uniqueness of the local solution 
	$\left(\tilde{Y}^{n}, \tilde{Z}^{n}\right) \in \tilde{\mathcal{B}}_{\varepsilon}$ 
	on the time interval $\left[T-\varepsilon, T\right]$, for some $\varepsilon \in \left(0, \varepsilon_{0}\right]$ 
	and bounded subset $\tilde{\mathcal{B}}_{\varepsilon}$. 
	Analogous to the proof of Lemma \ref{flowtran2}, we can show that 
	\begin{equation*} 
		\lim_{n \rightarrow \infty}\mathop{\esssup}\limits_{\left(\omega, t\right)}\mathbb{E}_{t}\int_{t}^{T} \left|\tilde{f}_{r}^{n}\left(\tilde{Y}_{r}^{0}, \tilde{Z}_{r}^{0}\right)-\tilde{f}_{r}^{0}\left(\tilde{Y}_{r}^{0}, \tilde{Z}_{r}^{0}\right)\right| d r=0.
	\end{equation*}
	Then by Theorem \ref{bsdesolumc2}, we get \eqref{transoluc}. 
\end{proof}
\begin{remark}
	As in Diehl and Friz \cite{RBSDE}, we may also consider the following more general rough BSDE 
	\begin{equation} \label{grbsde}
		Y_{t}=\xi+\int_{t}^{T} f_{r}\left(Y_{r}, Z_{r}\right) d r+\int_{t}^{T} g\left(S_{r}, Y_{r}\right) d \mathbf{X}_{r}-\int_{t}^{T} Z_{r} d W_{r}, \quad t \in \left[0, T\right],
	\end{equation}
	for given an Itô process $S$, a deterministic vector field $g$ 
	and a geometric $p$-rough path $\mathbf{X}$ with $p \in \left[1, \infty\right)$. 
	In an analogous way, we may get similar solvability results 
	of the rough BSDE \eqref{grbsde} as Theorems \ref{solueu2} and \ref{solueu3}. 
	We do not pursue the details of the extension here. 
\end{remark}

\begin{remark}
	One anonymous referee suggested us to use the result of Hu and Tang~\cite{DQBSDE2} on multidimensional diagonally quadratic BSDEs, to discuss rough BSDEs.  The last subsection is added to respond to the suggestion. 
	Other well-posedness results on multidimensional quadratic BSDEs (e.g. \cites{QBSDE2, QBSDE3}) are also worthy to be used  to solve the rough BSDE \eqref{rbsde2} with the flow transformation method, but their conditions associated to the suitable flow transformation seem rather awkward and remain to be studied. 
\end{remark}

\section{Integration and composition} \label{S4}
In this section, we build a framework for studying BSDEs with a linear rough drift \eqref{inrbsde3}. 
To this end, we will introduce stochastic controlled rough paths, 
establish rough stochastic integrals in $p$-variation scale 
(or simply $p$-rough stochastic integrals), 
and investigate composition of stochastic controlled rough paths. 
\subsection{Stochastic controlled rough paths} \label{S4.1}
We first introduce stochastic controlled rough paths. 
This notion is first introduced by Friz et al. \cite{RSDE3}*{Definitions 3.2 and 3.4} 
in Hölder scale. 
\begin{definition} \label{scrp}
	Given $X \in C^{p\text{-}var}\left(\left[0, T\right], \mathbb{R}^{e}\right)$, 
	for $m \in \left[1, \infty\right]$ and $q, q^{\prime} \in \left[1, \infty\right)$, 
	we call $\left(Y, Y^{\prime}\right)$ an $L^{m}$-integrable stochastic 
	controlled rough path of finite 
	$\left(q, q^{\prime}\right)$-variation with values in $V$, 
	denoted by $\left(Y, Y^{\prime}\right) \in \mathbf{D}_{X}^{\left(q, q^{\prime}\right)\text{-}var}L_{m}\left(\left[0, T\right], \Omega; V\right)=:\mathbf{D}_{X}^{\left(q, q^{\prime}\right)\text{-}var}L_{m}$, 
	if the following are satisfied
  	\begin{enumerate}[(i)]
		\item $Y \in C^{q\text{-}var}L_{m}\left(\left[0, T\right], \Omega; V\right)$;
		\item $Y^{\prime} \in C^{q^{\prime}\text{-}var}L_{m}\left(\left[0, T\right], \Omega; \mathcal{L}\left(\mathbb{R}^{e}, V\right)\right)$;
		\item writing 
    	\begin{equation*}
      	R^{Y}: \Omega \times \Delta \rightarrow V, \quad R_{s, t}^{Y}:= \delta Y_{s, t}-Y_{s}^{\prime} \delta X_{s, t}, \quad \forall \left(s, t\right) \in \Delta,
    	\end{equation*}
    	we have $\mathbb{E}_{\cdot}R^{Y} \in C_{2}^{\frac{qq^{\prime}}{q+q^{\prime}}\text{-}var}L_{m}\left(\left[0, T\right], \Omega; V\right)$. 
  	\end{enumerate}
\end{definition}
\begin{definition} \label{dcrp}
	We call $\left(Y, Y^{\prime}\right)$ a controlled rough path 
	of finite $\left(q, q^{\prime}\right)$-variation with values in 
	$L^{m}\left(\Omega; V\right)$, denoted by 
	$\left(Y, Y^{\prime}\right) \in \mathscr{D}_{X}^{\left(q, q^{\prime}\right)\text{-}var}L_{m}\left(\left[0, T\right], \Omega; V\right)=:\mathscr{D}_{X}^{\left(q, q^{\prime}\right)\text{-}var}L_{m}$, 
	if $\left(Y, Y^{\prime}\right) \in \mathbf{D}_{X}^{\left(q, q^{\prime}\right)\text{-}var}L_{m}$ 
	and $R^{Y} \in C_{2}^{\frac{qq^{\prime}}{q+q^{\prime}}\text{-}var}L_{m}$. 
\end{definition}
Let $K \geq 1$ be a constant. For $\left(Y, Y^{\prime}\right) \in \mathbf{D}_{X}^{\left(q, q^{\prime}\right)\text{-}var}L_{m}$, 
define 
\begin{equation} \label{norm}
	\left\|Y, Y^{\prime}\right\|_{\mathbf{D}_{X}^{\left(q, q^{\prime}\right)\text{-}var}L_{m}}^{\left(K\right)}:=K\left\|Y\right\|_{C^{q\text{-}var}L_{m}}+\left\|Y^{\prime}\right\|_{C^{q^{\prime}\text{-}var}L_{m}}+K\left\|\mathbb{E}_{\cdot}R^{Y}\right\|_{m,\frac{qq^{\prime}}{q+q^{\prime}}\text{-}var}.
\end{equation}
Then $\left(\mathbf{D}_{X}^{\left(q, q^{\prime}\right)\text{-}var}L_{m}, \left\|\cdot\right\|_{\mathbf{D}_{X}^{\left(q, q^{\prime}\right)\text{-}var}L_{m}}^{\left(K\right)}\right)$ 
is a Banach space and $\mathscr{D}_{X}^{\left(q, q^{\prime}\right)\text{-}var}L_{m}$ 
is its Banach subspace.  
For $K, K^{\prime} \geq 1$, 
$\left\|\cdot\right\|_{\mathbf{D}_{X}^{\left(q, q^{\prime}\right)\text{-}var}L_{m}}^{\left(K\right)}$ 
and $\left\|\cdot\right\|_{\mathbf{D}_{X}^{\left(q, q^{\prime}\right)\text{-}var}L_{m}}^{\left(K^{\prime}\right)}$ 
are equivalent. Moreover, 
for $X, \bar{X} \in C^{p\text{-}var}$, 
$\left(Y, Y^{\prime}\right) \in \mathbf{D}_{X}^{\left(q, q^{\prime}\right)\text{-}var}L_{m}$   
and $\left(\bar{Y}, \bar{Y}^{\prime}\right) \in \mathbf{D}_{\bar{X}}^{\left(q, q^{\prime}\right)\text{-}var}L_{m}$, 
we introduce the ``distance'' 
\begin{align}
	\left\|Y, Y^{\prime};\bar{Y}, \bar{Y}^{\prime}\right\|_{\mathbf{D}_{X, \bar{X}}^{\left(q, q^{\prime}\right)\text{-}var}L_{m}}^{\left(K\right)}&:=K\left\|Y-\bar{Y}\right\|_{C^{q\text{-}var}L_{m}}+\left\|Y^{\prime}-\bar{Y}^{\prime}\right\|_{C^{q^{\prime}\text{-}var}L_{m}} \notag\\
	&\quad+K\left\|\mathbb{E}_{\cdot}R^{Y}-\mathbb{E}_{\cdot}R^{\bar{Y}}\right\|_{m,\frac{qq^{\prime}}{q+q^{\prime}}\text{-}var}. \label{dist}
\end{align}
Although this ``distance" is not a metric for $X \neq \bar{X}$, since 
$\left(Y, Y^{\prime}\right)$ and $\left(\bar{Y}, \bar{Y}^{\prime}\right)$ 
lie in different Banach spaces, 
it will be usefull to describe the stability of integration and composition 
and continuity of the solution map. 
The following two lemmas give simple examples for stochastic controlled rough paths 
and will be used to study BSDEs with a linear rough drift in Section \ref{S5}. 
\begin{lemma} \label{L2scrp}
	For any $F \in \mathbb{L}^{2}$, we have $\int_{0}^{\cdot} F_{r} d r \in C^{1\text{-}var}L_{2}$ and 
	\begin{equation} \label{L2scrpe}
		\left\|\int_{0}^{\cdot} F_{r} d r\right\|_{2,1\text{-}var;\left[s, t\right]} \leq \left\|F\right\|_{\mathbb{L}^{2}}\left|t-s\right|^{\frac{1}{2}}, \quad \forall \left(s, t\right) \in \Delta.
	\end{equation}
	As a consequence, for any $q, q^{\prime} \in \left[2, \infty\right)$, $K \geq 1$ and $X \in C^{p\text{-}var}$, 
	we have $\left(\int_{0}^{\cdot} F_{r} d r, 0\right) \in \mathscr{D}_{X}^{\left(q, q^{\prime}\right)\text{-}var}L_{2}$ and 
	\begin{equation*}
		\left\|\int_{0}^{\cdot} F_{r} d r, 0\right\|_{\mathbf{D}_{X}^{\left(q, q^{\prime}\right)\text{-}var}L_{2}}^{\left(K\right)} \lesssim KT^{\frac{1}{2}}\left\|F\right\|_{\mathbb{L}^{2}}.
	\end{equation*}
\end{lemma}
\begin{proof}
	For every $\left(s, t\right) \in \Delta$ and 
	$\pi=\left\{s=t_{0}<t_{1}<\cdots<t_{N}=t\right\} \in \mathcal{P}\left[s, t\right]$, 
	applying the Cauchy--Schwarz inequality, we have 
	\begin{align*}
		\sum_{i=0}^{N-1}\left\|\int_{t_{i}}^{t_{i+1}} F_{r} d r\right\|_{2} &\leq \sum_{i=0}^{N-1}\left(\mathbb{E}\int_{t_{i}}^{t_{i+1}} \left|F_{r}\right|^{2} d r\right)^{\frac{1}{2}}\left|t_{i+1}-t_{i}\right|^{\frac{1}{2}}\\
		&\leq \left(\sum_{i=0}^{N-1}\mathbb{E}\int_{t_{i}}^{t_{i+1}} \left|F_{r}\right|^{2} d r\right)^{\frac{1}{2}}\left(\sum_{i=0}^{N-1}\left(t_{i+1}-t_{i}\right)\right)^{\frac{1}{2}}\\
		&\leq \left\|F\right\|_{\mathbb{L}^{2}}\left|t-s\right|^{\frac{1}{2}}. 
	\end{align*}
	Hence, $\int_{0}^{\cdot} F_{r} d r \in C^{1\text{-}var}L_{2}$ and \eqref{L2scrpe} holds. 
\end{proof}
\begin{lemma} \label{mtg}
	Let $M \in \mathscr{M}_{m}$ with $m \in \left[2, \infty\right)$. 
	Then $M \in C^{m\text{-}var}L_{m}$ and we have 
	\begin{equation} \label{m2var}
		\left\|\delta M\right\|_{m,m\text{-}var;\left[s, t\right]} \lesssim \left\|\delta M_{s, t}\right\|_{m}, \quad \forall \left(s, t\right) \in \Delta. 
	\end{equation}
	As a consequence, for any $q \in \left[m, \infty\right)$, $q^{\prime} \in \left[2, \infty\right)$, 
	$K \geq 1$ and $X \in C^{p\text{-}var}$, we have 
	$\left(M, 0\right) \in \mathbf{D}_{X}^{\left(q, q^{\prime}\right)\text{-}var}L_{m}$ and 
	\begin{equation*}
		\left\|M, 0\right\|_{\mathbf{D}_{X}^{\left(q, q^{\prime}\right)\text{-}var}L_{m}}^{\left(K\right)} \lesssim K\left\|M_{T}\right\|_{m}.
	\end{equation*}
\end{lemma}
\begin{proof}
	Since $M$ is continuous and $L^{m}$-integrable, the dominated convergence theorem gives 
	$M \in C\left(\left[0, T\right], L^{m}\left(\Omega\right)\right)$. 
	For every $\left(s, t\right) \in \Delta$ and every partition 
	$\pi=\left\{s=t_{0}<t_{1}<\cdots<t_{N}=t\right\} \in \mathcal{P}\left[s, t\right]$, 
	applying the Burkholder--Davis--Gundy inequality, we have 
	\begin{equation*}
		\sum_{i=0}^{N-1}\left\|\delta M_{t_{i}, t_{i+1}}\right\|_{m}^{m} \lesssim \sum_{i=0}^{N-1}\mathbb{E}\left|\delta \langle M\rangle_{t_{i}, t_{i+1}}\right|^{\frac{m}{2}} \lesssim \mathbb{E}\left|\sum_{i=0}^{N-1}\delta \langle M\rangle_{t_{i}, t_{i+1}}\right|^{\frac{m}{2}} \lesssim \left\|\delta M_{s, t}\right\|_{m}^{m},
	\end{equation*}
	where $\langle M\rangle$ is the quadratic variation process of $M$. 
	Then $M \in C^{m\text{-}var}L_{m}$ and \eqref{m2var} holds. 
	Since $\mathbb{E}_{s}R_{s, t}^{M}=\mathbb{E}_{s}\delta M_{s, t}=0$ for every $\left(s, t\right) \in \Delta$, 
	we have $\mathbb{E}_{\cdot}R^{M} \in C_{2}^{1\text{-}var}L_{m}$ and thus 
	$\left(M, 0\right) \in \mathbf{D}_{X}^{\left(q, q^{\prime}\right)\text{-}var}L_{m}$. 
\end{proof}
In the sequel of this section, we let $m \in \left[2, \infty\right)$ and $q, q^{\prime} \in \left[p, \infty\right)$ 
such that $\frac{1}{q}+\frac{1}{q^{\prime}} > \frac{1}{2}$. 
We have the following result on decomposition of stochastic controlled rough paths. 
A similar result in Hölder scale can be found in \cite{RSDE3}*{Theorem 3.5}. 
\begin{proposition} \label{deco}
	For any $\left(Y, Y^{\prime}\right) \in \mathbf{D}_{X}^{\left(q, q^{\prime}\right)\text{-}var}L_{m}$, 
	there exists a unique pair of processes $\left(Y^{M}, Y^{J}\right)$ such that 
	$Y^{M} \in \mathscr{M}_{m} \cap C^{q\text{-}var}L_{m}$ with $Y_{0}^{M}=0$, 
	$\left(Y^{J}, Y^{\prime}\right) \in \mathscr{D}_{X}^{\left(q, q^{\prime}\right)\text{-}var}L_{m}$, 
	and $Y=Y^{M}+Y^{J}$. Moreover, we have 
	\begin{equation} \label{RYJe}
		\left\|R^{Y^{J}}\right\|_{m,\frac{qq^{\prime}}{q+q^{\prime}}\text{-}var} \lesssim \left\|\delta Y^{\prime}\right\|_{m,q^{\prime}\text{-}var}\left|\delta X\right|_{p\text{-}var}+\left\|\mathbb{E}_{\cdot}R^{Y}\right\|_{m,\frac{qq^{\prime}}{q+q^{\prime}}\text{-}var}.
	\end{equation}
\end{proposition}
\begin{proof}
	For every $\left(s, u, t\right) \in \Delta_{2}$, we have 
	\begin{equation*}
		\left(\mathbb{E}_{s}-\mathbb{E}_{u}\right)\delta Y_{u, t}=\left(\mathbb{E}_{s}-\mathbb{E}_{u}\right)\left(Y_{u}^{\prime}\delta X_{u, t}+R_{u, t}^{Y}\right)=\left(\mathbb{E}_{s}-\mathbb{E}_{u}\right)\left(\delta Y_{s, u}^{\prime}\delta X_{u, t}+R_{u, t}^{Y}\right).
	\end{equation*}
	Then for every $\left(s, u, t\right) \in \Delta_{2}$, we have 
	\begin{align*}
		\left\|\left(\mathbb{E}_{s}-\mathbb{E}_{u}\right)\delta Y_{u, t}\right\|_{m} &\lesssim \left\|\delta Y_{s, u}^{\prime}\right\|_{m}\left|\delta X_{u, t}\right|+\left\|\mathbb{E}_{u}R_{u, t}^{Y}\right\|_{m}\\
		&\lesssim \left\|\delta Y^{\prime}\right\|_{m, q^{\prime}\text{-}var; \left[s, t\right]}\left|\delta X\right|_{p\text{-}var; \left[s, t\right]}+\left\|\mathbb{E}_{\cdot}R^{Y}\right\|_{m,\frac{qq^{\prime}}{q+q^{\prime}}\text{-}var; \left[s, t\right]}\\
		&\lesssim \left(\left\|\delta Y^{\prime}\right\|_{m, q^{\prime}\text{-}var; \left[s, t\right]}^{\frac{1}{z}}\left|\delta X\right|_{p\text{-}var; \left[s, t\right]}^{\frac{1}{z}}+\left\|\mathbb{E}_{\cdot}R^{Y}\right\|_{m,\frac{qq^{\prime}}{q+q^{\prime}}\text{-}var; \left[s, t\right]}^{\frac{1}{z}}\right)^{z}, 
	\end{align*}
	where $z:=\frac{1}{q}+\frac{1}{q^{\prime}} > \frac{1}{2}$. 
	By \cite{RPB2}*{Exercise 1.9 and Proposition 5.8}, the map 
	\begin{equation*}
		\left(s, t\right) \mapsto \left\|\delta Y^{\prime}\right\|_{m, q^{\prime}\text{-}var; \left[s, t\right]}^{\frac{1}{z}}\left|\delta X\right|_{p\text{-}var; \left[s, t\right]}^{\frac{1}{z}}+\left\|\mathbb{E}_{\cdot}R^{Y}\right\|_{m,\frac{qq^{\prime}}{q+q^{\prime}}\text{-}var; \left[s, t\right]}^{\frac{1}{z}}
	\end{equation*}
	defines a control. Then by \cite{SSL2}*{Theorem 3.3} with $A:=\delta Y$ (and thus $\mathcal{A}=Y-Y_{0}$), there exists a unique pair of processes $\left(\mathcal{M}, \mathcal{J}\right)$ 
	such that $\mathcal{M} \in \mathscr{M}_{m}$ with $\mathcal{M}_{0}=0$, $\mathcal{J}$ is a measurable adapted $L^{m}$-integrable process satisfying 
	\begin{equation} \label{Je}
		\left\|\delta \mathcal{J}_{s, t}-\mathbb{E}_{s} \delta Y_{s, t}\right\|_{m} \lesssim \left(\left\|\delta Y^{\prime}\right\|_{m, q^{\prime}\text{-}var; \left[s, t\right]}^{\frac{1}{z}}\left|\delta X\right|_{p\text{-}var; \left[s, t\right]}^{\frac{1}{z}}+\left\|\mathbb{E}_{\cdot}R^{Y}\right\|_{m,\frac{qq^{\prime}}{q+q^{\prime}}\text{-}var; \left[s, t\right]}^{\frac{1}{z}}\right)^{z}, 
	\end{equation}
	for every $\left(s, t\right) \in \Delta$, and $Y-Y_{0}=\mathcal{M}+\mathcal{J}$. 
	Define $Y^{M}:=\mathcal{M}$ and $Y^{J}:=Y_{0}+\mathcal{J}$ so that $Y=Y^{M}+Y^{J}$. 
	In view of \eqref{Je}, we have $Y^{J} \in C^{q\text{-}var}L_{m}$ and thus $Y^{M}=Y-Y^{J} \in C^{q\text{-}var}L_{m}$. 
	Since 
	\begin{equation*}
		R^{Y^{J}}_{s, t}=\delta Y^{J}_{s, t}-Y_{s}^{\prime}\delta X_{s, t}=\delta \mathcal{J}_{s, t}-\mathbb{E}_{s} \delta Y_{s, t}+\mathbb{E}_{s}R_{s, t}^{Y}, \quad \forall \left(s, t\right) \in \Delta,
	\end{equation*}
	we have 
	\begin{align*}
		\left\|R^{Y^{J}}_{s, t}\right\|_{m} &\leq \left\|\delta \mathcal{J}_{s, t}-\mathbb{E}_{s} \delta Y_{s, t}\right\|_{m}+\left\|\mathbb{E}_{s}R_{s, t}^{Y}\right\|_{m}\\
		&\lesssim \left(\left\|\delta Y^{\prime}\right\|_{m, q^{\prime}\text{-}var; \left[s, t\right]}^{\frac{1}{z}}\left|\delta X\right|_{p\text{-}var; \left[s, t\right]}^{\frac{1}{z}}+\left\|\mathbb{E}_{\cdot}R^{Y}\right\|_{m,\frac{qq^{\prime}}{q+q^{\prime}}\text{-}var; \left[s, t\right]}^{\frac{1}{z}}\right)^{z}, 
	\end{align*}
	for every $\left(s, t\right) \in \Delta$. 
	By \cite{RPB2}*{Proposition 5.10}, we have $R^{Y^{J}} \in C_{2}^{\frac{qq^{\prime}}{q+q^{\prime}}\text{-}var}L_{m}$ and 
	\begin{align*}
		\left\|R^{Y^{J}}\right\|_{m,\frac{qq^{\prime}}{q+q^{\prime}}\text{-}var} &\lesssim \left(\left\|\delta Y^{\prime}\right\|_{m, q^{\prime}\text{-}var}^{\frac{1}{z}}\left|\delta X\right|_{p\text{-}var}^{\frac{1}{z}}+\left\|\mathbb{E}_{\cdot}R^{Y}\right\|_{m,\frac{qq^{\prime}}{q+q^{\prime}}\text{-}var}^{\frac{1}{z}}\right)^{z}\\
		&\lesssim \left\|\delta Y^{\prime}\right\|_{m,q^{\prime}\text{-}var}\left|\delta X\right|_{p\text{-}var}+\left\|\mathbb{E}_{\cdot}R^{Y}\right\|_{m,\frac{qq^{\prime}}{q+q^{\prime}}\text{-}var}.
	\end{align*}
	Therefore, we have $\left(Y^{J}, Y^{\prime}\right) \in \mathscr{D}_{X}^{\left(q, q^{\prime}\right)\text{-}var}L_{m}$.\\
	\indent
	We now show the uniqueness of $\left(Y^{M}, Y^{J}\right)$. Assume $\left(\bar{Y^{M}}, \bar{Y^{J}}\right)$ is another pair of processes such that 
	$\bar{Y^{M}} \in \mathscr{M}_{m} \cap C^{q\text{-}var}L_{m}$ with $\bar{Y_{0}^{M}}=0$, 
	$\left(\bar{Y^{J}}, Y^{\prime}\right) \in \mathscr{D}_{X}^{\left(q, q^{\prime}\right)\text{-}var}L_{m}$, 
	and $Y=\bar{Y^{M}}+\bar{Y^{J}}$. 
	Put $\Delta Y^{M}:=Y^{M}-\bar{Y^{M}}$ and $\Delta Y^{J}:=Y^{J}-\bar{Y^{J}}$. 
	Then we have $\Delta Y^{M} \in \mathscr{M}_{m}$ with $\Delta Y_{0}^{M}=0$, 
	$\left(\Delta Y^{M}, 0\right)=\left(-\Delta Y^{J}, 0\right) \in \mathscr{D}_{X}^{\left(q, q^{\prime}\right)\text{-}var}L_{m}$ 
	and thus $\Delta Y^{M} \in C^{\frac{qq^{\prime}}{q+q^{\prime}}\text{-}var}L_{m}$. 
	For every partition $\pi=\left\{s=t_{0}<t_{1}<\cdots<t_{N}=t\right\} \in \mathcal{P}\left[0, T\right]$, we have 
	\begin{align*}
		\left\|\Delta Y_{T}^{M}\right\|_{2}^{2}&=\sum_{i=0}^{N-1}\mathbb{E}\left|\delta \Delta Y_{t_{i}, t_{i+1}}^{M}\right|^{2} \leq \left(\sum_{i=0}^{N-1}\left\|\delta \Delta Y_{t_{i}, t_{i+1}}^{M}\right\|_{2}^{\frac{qq^{\prime}}{q+q^{\prime}}}\right)\sup_{i}\left\|\delta \Delta Y_{t_{i}, t_{i+1}}^{M}\right\|_{2}^{2-\frac{qq^{\prime}}{q+q^{\prime}}}\\
		&\leq \left\|\delta \Delta Y^{M}\right\|_{m, \frac{qq^{\prime}}{q+q^{\prime}}\text{-}var}^{\frac{qq^{\prime}}{q+q^{\prime}}}\sup_{i}\left\|\delta \Delta Y_{t_{i}, t_{i+1}}^{M}\right\|_{m}^{2-\frac{qq^{\prime}}{q+q^{\prime}}} \rightarrow 0, \quad as\ \left|\pi\right| \rightarrow 0.
	\end{align*}
	Hence, $\left(Y^{M}, Y^{J}\right)=\left(\bar{Y^{M}}, \bar{Y^{J}}\right)$. 
\end{proof}
The above decomposition result induces an equivalent norm on $\mathbf{D}_{X}^{\left(q, q^{\prime}\right)\text{-}var}L_{m}$. 
\begin{proposition} \label{enorm}
	For any $\left(Y, Y^{\prime}\right) \in \mathbf{D}_{X}^{\left(q, q^{\prime}\right)\text{-}var}L_{m}$, we have 
	\begin{align}
		&\left\|Y, Y^{\prime}\right\|_{\mathbf{D}_{X}^{\left(q, q^{\prime}\right)\text{-}var}L_{m}}^{\left(K\right)}\left(1+K\left|\mathbf{X}\right|_{p\text{-}var}\right)^{-1} \notag\\
		&\quad \lesssim K\left\|Y_{T}\right\|_{m}+K\left\|\delta Y^{M}\right\|_{m,q\text{-}var}+\left\|Y^{\prime}\right\|_{C^{q^{\prime}\text{-}var}L_{m}}+K\left\|R^{Y^{J}}\right\|_{m,\frac{qq^{\prime}}{q+q^{\prime}}\text{-}var} \label{enorme}\\
		&\quad \lesssim \left\|Y, Y^{\prime}\right\|_{\mathbf{D}_{X}^{\left(q, q^{\prime}\right)\text{-}var}L_{m}}^{\left(K\right)}\left(1+K\left|\mathbf{X}\right|_{p\text{-}var}\right). \notag
	\end{align}
\end{proposition}
\begin{proof}
	Since $\left(Y^{J}, Y^{\prime}\right) \in \mathscr{D}_{X}^{\left(q, q^{\prime}\right)\text{-}var}L_{m}$, we have 
	\begin{align*}
		\left\|\delta Y_{s, t}^{J}\right\|_{m} &\leq \left\|Y_{s}^{\prime}\right\|_{m}\left|\delta X_{s, t}\right|+\left\|R_{s, t}^{Y^{J}}\right\|_{m}\\
		&\leq \sup_{r \in \left[0, T\right]}\left\|Y_{r}^{\prime}\right\|_{m}\left|\delta X\right|_{p\text{-}var; \left[s, t\right]}+\left\|R^{Y^{J}}\right\|_{m,\frac{qq^{\prime}}{q+q^{\prime}}\text{-}var; \left[s, t\right]}\\
		&\lesssim \left(\sup_{r \in \left[0, T\right]}\left\|Y_{r}^{\prime}\right\|_{m}^{q}\left|\delta X\right|_{p\text{-}var; \left[s, t\right]}^{q}+\left\|R^{Y^{J}}\right\|_{m,\frac{qq^{\prime}}{q+q^{\prime}}\text{-}var; \left[s, t\right]}^{q}\right)^{\frac{1}{q}}, \quad \forall \left(s, t\right) \in \Delta.
	\end{align*}
	By \cite{RPB2}*{Exercise 1.9 and Proposition 5.8}, the map 
	\begin{equation*}
		\left(s, t\right) \mapsto \sup_{r \in \left[0, T\right]}\left\|Y_{r}^{\prime}\right\|_{m}^{q}\left|\delta X\right|_{p\text{-}var; \left[s, t\right]}^{q}+\left\|R^{Y^{J}}\right\|_{m,\frac{qq^{\prime}}{q+q^{\prime}}\text{-}var; \left[s, t\right]}^{q}
	\end{equation*}
	defines a control. In view of \eqref{RYJe}, by \cite{RPB2}*{Proposition 5.10} we have 
	\begin{align}
		\left\|\delta Y^{J}\right\|_{m,q\text{-}var} &\lesssim \left(\sup_{r \in \left[0, T\right]}\left\|Y_{r}^{\prime}\right\|_{m}^{q}\left|\delta X\right|_{p\text{-}var}^{q}+\left\|R^{Y^{J}}\right\|_{m,\frac{qq^{\prime}}{q+q^{\prime}}\text{-}var}^{q}\right)^{\frac{1}{q}} \notag\\
		&\lesssim \sup_{r \in \left[0, T\right]}\left\|Y_{r}^{\prime}\right\|_{m}\left|\delta X\right|_{p\text{-}var}+\left\|R^{Y^{J}}\right\|_{m,\frac{qq^{\prime}}{q+q^{\prime}}\text{-}var} \label{YJe}\\
		&\lesssim \left\|Y, Y^{\prime}\right\|_{\mathbf{D}_{X}^{\left(q, q^{\prime}\right)\text{-}var}L_{m}}^{\left(K\right)}\left(\frac{1}{K}+\left|\mathbf{X}\right|_{p\text{-}var}\right). \notag
	\end{align}
	Combined with 
	\begin{equation*}
		\left\|\delta Y\right\|_{m,q\text{-}var} \leq \left\|\delta Y^{M}\right\|_{m,q\text{-}var}+\left\|\delta Y^{J}\right\|_{m,q\text{-}var}
	\end{equation*}
	and 
	\begin{equation*}
		\left\|\mathbb{E}_{\cdot}R^{Y}\right\|_{m,\frac{qq^{\prime}}{q+q^{\prime}}\text{-}var}=\left\|\mathbb{E}_{\cdot}R^{Y^{J}}\right\|_{m,\frac{qq^{\prime}}{q+q^{\prime}}\text{-}var} \leq \left\|R^{Y^{J}}\right\|_{m,\frac{qq^{\prime}}{q+q^{\prime}}\text{-}var}, 
	\end{equation*}
	we get the first inequality of \eqref{enorme}. 
	On the other hand, we can derive the last inequality of \eqref{enorme} in a similar way 
	with estimates \eqref{RYJe}, \eqref{YJe} and 
	\begin{equation*}
		\left\|\delta Y^{M}\right\|_{m,q\text{-}var} \leq \left\|\delta Y\right\|_{m,q\text{-}var}+\left\|\delta Y^{J}\right\|_{m,q\text{-}var}.
	\end{equation*}
\end{proof}
In a similar way, we have the following result. 
\begin{proposition} \label{edist}
	For $X, \bar{X} \in C^{p\text{-}var}$, 
	$\left(Y, Y^{\prime}\right) \in \mathbf{D}_{X}^{\left(q, q^{\prime}\right)\text{-}var}L_{m}$ 
	and $\left(\bar{Y}, \bar{Y}^{\prime}\right) \in \mathbf{D}_{\bar{X}}^{\left(q, q^{\prime}\right)\text{-}var}L_{m}$, 
	we have 
	\begin{align}
		&\left\|Y, Y^{\prime};\bar{Y}, \bar{Y}^{\prime}\right\|_{\mathbf{D}_{X, \bar{X}}^{\left(q, q^{\prime}\right)\text{-}var}L_{m}}^{\left(K\right)}\left(1+K\left|\bar{\mathbf{X}}\right|_{p\text{-}var}\right)^{-1} \notag\\
		&\quad \lesssim K\left\|Y^{\prime}\right\|_{C^{q^{\prime}\text{-}var}L_{m}}\rho_{p\text{-}var}\left(\mathbf{X}, \bar{\mathbf{X}}\right)+K\left\|Y_{T}-\bar{Y}_{T}\right\|_{m}+K\left\|\delta Y^{M}-\delta \bar{Y}^{M}\right\|_{m,q\text{-}var} \notag\\
		&\quad \quad+\left\|Y^{\prime}-\bar{Y}^{\prime}\right\|_{C^{q^{\prime}\text{-}var}L_{m}}+K\left\|\mathbb{E}_{\cdot}R^{Y}-\mathbb{E}_{\cdot}R^{\bar{Y}}\right\|_{m,\frac{qq^{\prime}}{q+q^{\prime}}\text{-}var} \label{ediste}\\
		&\quad \lesssim K\left\|Y^{\prime}\right\|_{C^{q^{\prime}\text{-}var}L_{m}}\rho_{p\text{-}var}\left(\mathbf{X}, \bar{\mathbf{X}}\right)+\left\|Y, Y^{\prime};\bar{Y}, \bar{Y}^{\prime}\right\|_{\mathbf{D}_{X, \bar{X}}^{\left(q, q^{\prime}\right)\text{-}var}L_{m}}^{\left(K\right)}\left(1+K\left|\bar{\mathbf{X}}\right|_{p\text{-}var}\right). \notag
	\end{align}
\end{proposition}
\begin{proof}
	Put $\Delta Y:=Y-\bar{Y}$ and similarly for other functions. 
	Analogous to \eqref{RYJe}, we have 
	\begin{align}
		\left\|\Delta R^{Y^{J}}\right\|_{m,\frac{qq^{\prime}}{q+q^{\prime}}\text{-}var} &\lesssim \left\|\delta Y^{\prime}\right\|_{m,q^{\prime}\text{-}var}\left|\delta \Delta X\right|_{p\text{-}var}+\left\|\delta \Delta Y^{\prime}\right\|_{m,q^{\prime}\text{-}var}\left|\delta \bar{X}\right|_{p\text{-}var} \notag\\
		&\quad+\left\|\mathbb{E}_{\cdot}\Delta R^{Y}\right\|_{m,\frac{qq^{\prime}}{q+q^{\prime}}\text{-}var}. \label{DRYJe}
	\end{align}
	Since
	\begin{equation*}
		\delta \Delta Y_{s, t}^{J}=Y_{s}^{\prime}\delta X_{s, t}-\bar{Y}_{s}^{\prime}\delta \bar{X}_{s, t}+\Delta R_{s, t}^{Y^{J}}=Y_{s}^{\prime}\delta \Delta X_{s, t}+\Delta Y_{s}^{\prime}\delta \bar{X}_{s, t}+\Delta R_{s, t}^{Y^{J}}, \quad \forall \left(s, t\right) \in \Delta,
	\end{equation*}
	we have 
	\begin{align*}
		\left\|\delta \Delta Y_{s, t}^{J}\right\|_{m} &\leq \sup_{r \in \left[0, T\right]}\left\|Y_{r}^{\prime}\right\|_{m}\left|\delta \Delta X\right|_{p\text{-}var; \left[s, t\right]}+\sup_{r \in \left[0, T\right]}\left\|\Delta Y_{r}^{\prime}\right\|_{m}\left|\delta \bar{X}\right|_{p\text{-}var; \left[s, t\right]}\\
		&\quad+\left\|\Delta R^{Y^{J}}\right\|_{m,\frac{qq^{\prime}}{q+q^{\prime}}\text{-}var; \left[s, t\right]}, \quad \forall \left(s, t\right) \in \Delta.
	\end{align*}
	In view of \eqref{DRYJe}, we have 
	\begin{align}
		&\left\|\delta \Delta Y^{J}\right\|_{m,q\text{-}var} \notag\\
		&\quad \lesssim \sup_{r \in \left[0, T\right]}\left\|Y_{r}^{\prime}\right\|_{m}\left|\delta \Delta X\right|_{p\text{-}var}+\sup_{r \in \left[0, T\right]}\left\|\Delta Y_{r}^{\prime}\right\|_{m}\left|\delta \bar{X}\right|_{p\text{-}var}+\left\|\Delta R^{Y^{J}}\right\|_{m,\frac{qq^{\prime}}{q+q^{\prime}}\text{-}var} \label{DYJe}\\
		&\quad \lesssim \left\|Y^{\prime}\right\|_{C^{q^{\prime}\text{-}var}L_{m}}\rho_{p\text{-}var}\left(\mathbf{X}, \bar{\mathbf{X}}\right)+\left\|Y, Y^{\prime};\bar{Y}, \bar{Y}^{\prime}\right\|_{\mathbf{D}_{X, \bar{X}}^{\left(q, q^{\prime}\right)\text{-}var}L_{m}}^{\left(K\right)}\left(\frac{1}{K}+\left|\bar{\mathbf{X}}\right|_{p\text{-}var}\right). \notag
	\end{align}
	Combined with 
	\begin{equation*}
		\left\|\delta \Delta Y\right\|_{m,q\text{-}var} \leq \left\|\delta \Delta Y^{M}\right\|_{m,q\text{-}var}+\left\|\delta \Delta Y^{J}\right\|_{m,q\text{-}var}
	\end{equation*}
	and 
	\begin{equation*}
		\left\|\mathbb{E}_{\cdot}\Delta R^{Y}\right\|_{m,\frac{qq^{\prime}}{q+q^{\prime}}\text{-}var}=\left\|\mathbb{E}_{\cdot}\Delta R^{Y^{J}}\right\|_{m,\frac{qq^{\prime}}{q+q^{\prime}}\text{-}var} \leq \left\|\Delta R^{Y^{J}}\right\|_{m,\frac{qq^{\prime}}{q+q^{\prime}}\text{-}var}, 
	\end{equation*}
	we get the first inequality of \eqref{ediste}. 
	On the other hand, we can derive the last inequality of \eqref{ediste} in a similar way 
	with estimates \eqref{DRYJe}, \eqref{DYJe} and 
	\begin{equation*}
		\left\|\delta \Delta Y^{M}\right\|_{m,q\text{-}var} \leq \left\|\delta \Delta Y\right\|_{m,q\text{-}var}+\left\|\delta \Delta Y^{J}\right\|_{m,q\text{-}var}.
	\end{equation*}
\end{proof}
\subsection{$p$-rough stochastic integrals} \label{S4.2}
To establish the $p$-rough stochastic integrals, we need the following two lemmas. 
Lemma \ref{sl} is a specific version of the (deterministic) sewing lemma, 
which is first introduced by Gubinelli \cite{SL1}*{Proposition 1} and highlighted in \cite{SL2}. 
Lemma \ref{ssl} is an extension of the stochastic sewing lemma of Lê \cite{SSL1}*{Theorem 2.1}. 
Both of them can be proved in an analogous manner to the proof of \cite{SSL2}*{Theorem 3.1} 
and \cite{RSDE3}*{Theorem 2.8}. 
\begin{lemma} \label{sl}
	Let $A: \Omega \times \Delta \rightarrow V$ be a measurable adapted $L^{m}$-integrable 
	two-parameter process such that for every $s \in \left[0, T\right]$, 
	the map $t \rightarrow A_{s, t}$ is a.s. continuous on $\left[s, T\right]$. 
	Assume that there exists a constant $z > 1$ and a control $w$ such that 
	\begin{equation*}
		\left\|\sup_{v \in \left[u, t\right]}\left|\delta A_{s, u, v}\right|\right\|_{m} \leq w\left(s, t\right)^{z}, \quad \forall \left(s, u, t\right) \in \Delta_{2}.
	\end{equation*}
	Then there exists a unique continuous adapted $L^{m}$-integrable process 
	$\mathcal{A}: \Omega \times \left[0, T\right] \rightarrow V$ 
	with $\mathcal{A}_{0}=0$ such that 
	\begin{equation*}
		\left\|\delta\mathcal{A}_{s, t}-A_{s, t}\right\|_{m} \lesssim w\left(s, t\right)^{z}, \quad \forall \left(s, t\right) \in \Delta.
	\end{equation*}
	Moreover, we have 
	\begin{equation} \label{rsc}
		\lim_{\pi \in \mathcal{P}\left[0, T\right], |\mathcal{\pi}| \rightarrow 0}\left\|\sup_{t \in \left[0 ,T\right]}\left|\mathcal{A}_{t}-\sum_{\left[u, v\right] \in \mathcal{\pi}, u \leq t}A_{u, v \wedge t}\right|\right\|_{m}=0.
	\end{equation}
\end{lemma}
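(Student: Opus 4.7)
The plan is to transpose the classical sewing argument of Gubinelli to the Banach space $L^{m}(\Omega; V)$, with $\|\cdot\|_{m}$ replacing the pointwise norm. The multi-control hypothesis is handled by the standard generalization of the sewing lemma (cf.\ \cite[Theorem 2.5]{RPJ}), whose proof adapts verbatim to the $L^{m}$ setting; measurability and adaptedness of the limit are inherited from the corresponding properties of Riemann-type approximants.

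Concretely, for each partition $\pi = \{0 = t_{0} < t_{1} < \cdots < t_{N} = T\}$ and $t \in [0, T]$, set
\begin{equation*}
\mathcal{A}^{\pi}_{t} := \sum_{k:\, t_{k} \leq t} A_{t_{k},\, t_{k+1} \wedge t}.
\end{equation*}
Each summand $A_{t_{k},\, t_{k+1} \wedge t}$ is $\mathcal{F}_{t_{k+1} \wedge t}$-measurable and $L^{m}$-integrable, so $\mathcal{A}^{\pi}$ is a measurable adapted $L^{m}$-integrable process. Write $I(\pi, s, t) := \sum_{[u,v] \in \pi} A_{u,v}$ for the Riemann sum on a partition of $[s,t]$. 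The heart of the proof is the pigeonhole-removal estimate: for $s = u_{0} < u_{1} < \cdots < u_{n} = t$ with $n \geq 2$, superadditivity of each $w_{i}$ applied by splitting into even- and odd-indexed pairs yields $\sum_{j=1}^{n-1} w_{i}(u_{j-1}, u_{j+1}) \leq 2 w_{i}(s,t)$, and one may select an interior index $j_{\ast}$ producing an $\frac{1}{n^{1+\varepsilon}}$-type contraction of $\sum_{i} w_{i}(u_{j_{\ast}-1}, u_{j_{\ast}+1})^{1+\varepsilon_{i}}$ with $\varepsilon := \min_{i} \varepsilon_{i} > 0$, as in \cite[Theorem 2.5]{RPJ}. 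Removing $u_{j_{\ast}}$ alters the Riemann sum by exactly $-\delta A_{u_{j_{\ast}-1}, u_{j_{\ast}}, u_{j_{\ast}+1}}$, whose $L^{m}$-norm is controlled by the hypothesis. Iterating down to the trivial partition $\{s, t\}$ and summing a series of type $\sum_{n} n^{-(1+\varepsilon)}$ then gives the partition-uniform bound
\begin{equation*}
\|I(\pi, s, t) - A_{s,t}\|_{m} \leq C_{N, \{\varepsilon_{i}\}} \sum_{i=1}^{N} w_{i}(s,t)^{1+\varepsilon_{i}}.
\end{equation*}

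With this in hand, two partitions $\pi_{1}, \pi_{2}$ can be compared via their common refinement on each interval between consecutive points of $\pi_{1} \cup \pi_{2}$; continuity of the $w_{i}$'s then shows that $\{\mathcal{A}^{\pi}\}$ is Cauchy in $L^{m}$ uniformly in $t \in [0, T]$ as $|\pi| \to 0$. The limit $\mathcal{A}$ inherits measurability and adaptedness (after extracting an a.s.-convergent subsequence), satisfies $\mathcal{A}_{0} = 0$, and yields simultaneously the asserted bound on $\|\delta \mathcal{A}_{s,t} - A_{s,t}\|_m$ (by specializing $\pi$ to $\{s,t\}$) and the Riemann-sum convergence \eqref{rsc}. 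For uniqueness, if $\bar{\mathcal{A}}$ is another candidate, then $D := \mathcal{A} - \bar{\mathcal{A}}$ satisfies $\|D_{t} - D_{s}\|_{m} \leq 2 C_{N, \{\varepsilon_{i}\}} \sum_{i} w_{i}(s,t)^{1+\varepsilon_{i}}$; telescoping along any partition $\pi$ of $[s,t]$ and estimating $w_{i}(u,v)^{1+\varepsilon_{i}} \leq (\max_{[u,v] \in \pi} w_{i}(u,v))^{\varepsilon_{i}} \cdot w_{i}(u,v)$, together with continuity of $w_{i}$, forces $D \equiv 0$.

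The main technical obstacle is the multi-control pigeonhole step: a naive averaging over interior indices produces only an $O(1/n)$ gain, which is insufficient for summable iteration, so the correct $O(1/n^{1+\varepsilon})$ contraction must be secured simultaneously for all $N$ pairs $(w_{i}, \varepsilon_{i})$---this is the content of \cite[Theorem 2.5]{RPJ}. The remaining steps---the Riemann-sum construction, the Cauchy property, and the passage to the $L^{m}$-limit while preserving measurability and adaptedness---are routine transpositions from the deterministic single-control case.
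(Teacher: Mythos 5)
Your argument is correct and is essentially the approach the paper relies on: the paper gives no proof of Lemma \ref{sl}, merely citing Gubinelli's sewing lemma and its control/$p$-variation generalization in \cite[Theorem 2.5]{RPJ}, and your write-up is exactly that argument transposed to the Banach space $L^{m}\left(\Omega; V\right)$ together with the routine verification that the Riemann-sum approximants, and hence their limit, are measurable, adapted and $L^{m}$-integrable. The only detail worth recording is that the simultaneous $O\left(n^{-\left(1+\varepsilon\right)}\right)$ contraction you invoke is most cleanly secured either by pigeonholing the normalized control $\sum_{i} w_{i}\left(\cdot, \cdot\right)/w_{i}\left(s, t\right)$, or by reducing at the outset to the single control $w:=\sum_{i} w_{i}^{\left(1+\varepsilon_{i}\right)/\left(1+\varepsilon\right)}$ with $\varepsilon:=\min_{i}\varepsilon_{i}$, for which $\sum_{i} w_{i}\left(s, t\right)^{1+\varepsilon_{i}} \leq w\left(s, t\right)^{1+\varepsilon} \leq N^{\varepsilon}\sum_{i} w_{i}\left(s, t\right)^{1+\varepsilon_{i}}$; both routes yield the constant $C_{N, \left\{\varepsilon_{i}\right\}}$ as stated.
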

\begin{lemma} \label{ssl}
	Let $A: \Omega \times \Delta \rightarrow V$ be a measurable adapted $L^{m}$-integrable 
	two-parameter process such that for every $s \in \left[0, T\right]$, 
	the map $t \rightarrow A_{s, t}$ is a.s. continuous on $\left[s, T\right]$. 
	Assume that there exists a constant $z > \frac{1}{2}$ and a control $w$ such that 
	\begin{equation*}
		\left\|\sup_{v \in \left[u, t\right]}\left|\delta A_{s, u, v}\right|\right\|_{m} \leq w\left(s, t\right)^{z}, \quad \mathbb{E}_{s}\delta A_{s, u, t}=0, \quad \forall \left(s, u, t\right) \in \Delta_{2}.
	\end{equation*}
	Then there exists a unique continuous adapted $L^{m}$-integrable process 
	$\mathcal{A}: \Omega \times \left[0, T\right] \rightarrow V$ 
	with $\mathcal{A}_{0}=0$ such that 
	\begin{equation*}
		\left\|\delta\mathcal{A}_{s, t}-A_{s, t}\right\|_{m} \lesssim w\left(s, t\right)^{z}, \quad \mathbb{E}_{s}\left[\delta\mathcal{A}_{s, t}-A_{s, t}\right]=0, \quad \forall \left(s, t\right) \in \Delta. 
	\end{equation*}
	Moreover, \eqref{rsc} also holds. 
\end{lemma}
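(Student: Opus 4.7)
The plan is to follow the strategy of the stochastic sewing lemma of \cite{SSL1} and its $p$-variation-scale extension \cite[Theorem 3.1]{SSL2}, replacing Hölder estimates with the superadditivity of the controls $w_{i}$. I first define, for every partition $\pi \in \mathcal{P}([s,t])$, the Riemann-type sum $A^{\pi}_{s,t} := \sum_{[u,v] \in \pi} A_{u,v}$, and aim to show that the net $\{A^{\pi}_{s,t}\}$ is Cauchy in $L^{m}$ as $|\pi|\to 0$. Setting $\mathcal{A}_{t}$ equal to this limit on $[0,t]$ will produce a measurable adapted $L^{m}$-integrable process with $\mathcal{A}_{0}=0$, and the required convergence \eqref{rsc} will be built into the definition.

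The heart of the argument is an estimate on the difference $A^{\pi'}_{s,t} - A^{\pi}_{s,t}$ for a refinement $\pi' \supset \pi$. By telescoping, this difference equals $-\sum_{[u,v]\in\pi} \delta A_{u,r,v}$ when $\pi'$ is obtained from $\pi$ by inserting midpoints $r$. Because of the hypothesis $\mathbb{E}_{u}\delta A_{u,r,v}=0$, these terms form a martingale-difference array along $\pi$, so the Burkholder–Davis–Gundy inequality applies and yields
\begin{equation*}
\bigl\| A^{\pi'}_{s,t} - A^{\pi}_{s,t} \bigr\|_{m} \leq C_{m}\Bigl(\sum_{[u,v]\in\pi}\|\delta A_{u,r,v}\|_{m}^{2}\Bigr)^{1/2} \leq C_{m,N} \sum_{i=1}^{N}\Bigl(\sum_{[u,v]\in\pi} w_{i}(u,v)^{1+2\varepsilon_{i}}\Bigr)^{1/2}.
\end{equation*}
Using the bound $\sum w_{i}(u,v)^{1+2\varepsilon_{i}} \leq (\max_{[u,v]\in\pi} w_{i}(u,v))^{2\varepsilon_{i}} \sum w_{i}(u,v) \leq |\pi|_{w_{i}}^{2\varepsilon_{i}} w_{i}(s,t)$, where $|\pi|_{w_{i}}:=\max_{[u,v]\in\pi} w_{i}(u,v) \to 0$ by continuity of $w_{i}$, iteration over successive dyadic refinements gives a geometrically summable bound and hence the Cauchy property. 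The sewing estimate $\|\delta \mathcal{A}_{s,t}-A_{s,t}\|_{m} \leq C\sum_{i} w_{i}(s,t)^{1/2+\varepsilon_{i}}$ then follows by applying this argument starting from the trivial partition $\pi=\{[s,t]\}$ and absorbing $|\pi|_{w_{i}}^{\varepsilon_{i}}$ into $w_{i}(s,t)^{\varepsilon_{i}}$. The identity $\mathbb{E}_{s}[\delta \mathcal{A}_{s,t}-A_{s,t}]=0$ passes to the $L^{m}$-limit once one verifies $\mathbb{E}_{s} A^{\pi}_{s,t}=A_{s,t}$ for every $\pi$ by repeated use of the tower property and $\mathbb{E}_{u}\delta A=0$. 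Uniqueness is settled by applying the same estimate to $B:=\mathcal{A}-\tilde{\mathcal{A}}$: from $\|\delta B_{s,t}\|_{m}\leq 2C\sum_{i} w_{i}(s,t)^{1/2+\varepsilon_{i}}$ and $\mathbb{E}_{s}\delta B_{s,t}=0$, dyadic refinement of $[s,t]$ and BDG force $\|\delta B_{s,t}\|_{m}\to 0$.

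The main obstacle will be extracting a genuine martingale-difference structure at every scale so that BDG, rather than the triangle inequality, can be invoked; this is exactly what converts the exponent $1+2\varepsilon_{i}$ coming from the $L^{m}$-variance into the exponent $\frac{1}{2}+\varepsilon_{i}$ consistent with the hypothesis on $\delta A$. A further technical point is that the controls $w_{i}$ are not assumed to scale polynomially in $|t-s|$, so the mesh-decay $|\pi|_{w_{i}}\to 0$ must be argued from continuity of $w_{i}$ alone, and it is the superadditivity $\sum_{[u,v]\in\pi} w_{i}(u,v)\leq w_{i}(s,t)$ that lets the sum of the $w_{i}(u,v)^{1+2\varepsilon_{i}}$ collapse to $w_{i}(s,t)^{1+2\varepsilon_{i}}$ times a vanishing mesh factor.
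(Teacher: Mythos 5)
Your core mechanism is the right one: the hypothesis $\mathbb{E}_{s}\delta A_{s,u,t}=0$ makes the corrections coming from refining each interval of a partition into a martingale-difference array, so BDG (for $m\ge 2$) converts the naive triangle-inequality bound into a square-root gain, and your uniqueness argument and the identity $\mathbb{E}_{s}[\delta\mathcal{A}_{s,t}-A_{s,t}]=0$ are fine, since there only $\max_{[u,v]\in\pi}w_{i}(u,v)\to 0$ is needed. The genuine gap is the step ``iteration over successive dyadic refinements gives a geometrically summable bound.'' Your one-step refinement estimate is $\|A^{\pi'}_{s,t}-A^{\pi}_{s,t}\|_{m}\le C_{m,N}\sum_{i}\bigl(\max_{[u,v]\in\pi}w_{i}(u,v)\bigr)^{\varepsilon_{i}}w_{i}(s,t)^{1/2}$, so summing over the levels $\pi_{0}\subset\pi_{1}\subset\cdots$ requires $\sum_{n}\bigl(\max_{[u,v]\in\pi_{n}}w_{i}(u,v)\bigr)^{\varepsilon_{i}}\le C\,w_{i}(s,t)^{\varepsilon_{i}}$. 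If $\pi_{n}$ is obtained by inserting midpoints in time, continuity of $w_{i}$ only gives that this maximum tends to $0$, with no rate: for $w_{i}(s,t)=\psi(t)-\psi(s)$ with $\psi$ increasing and of logarithmic modulus of continuity, the level-$n$ maximum decays like $1/n$ and $\sum_{n}n^{-\varepsilon_{i}}$ diverges for $\varepsilon_{i}\le 1$. Consequently neither the $L^{m}$-convergence of your scheme nor, crucially, the bound $\|\delta\mathcal{A}_{s,t}-A_{s,t}\|_{m}\le C_{N,\{\varepsilon_{i}\}}\sum_{i}w_{i}(s,t)^{\frac12+\varepsilon_{i}}$ with a constant independent of the controls follows; ``absorbing $|\pi|_{w_{i}}^{\varepsilon_{i}}$ into $w_{i}(s,t)^{\varepsilon_{i}}$'' produces at every level a term comparable to the target bound, hence a divergent series.

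The missing idea — and the one used in the proof the paper points to, namely \cite[Theorem 3.1]{SSL2} — is to refine with respect to the control rather than with respect to time. Set $\varepsilon:=\min_{i}\varepsilon_{i}$ and $\tilde w:=\sum_{i}w_{i}^{(1/2+\varepsilon_{i})/(1/2+\varepsilon)}$; this is again a control (powers $\ge 1$ of controls are controls), and $\sum_{i}w_{i}(s,t)^{1/2+\varepsilon_{i}}$ is comparable to $\tilde w(s,t)^{1/2+\varepsilon}$ with constants depending only on $N$ and $\{\varepsilon_{i}\}$, so you may assume $N=1$. Then bisect each interval at a point $r$ with $\tilde w(u,r)=\tfrac12\tilde w(u,v)$ (intermediate value theorem plus superadditivity gives $\tilde w(r,v)\le\tfrac12\tilde w(u,v)$), so that $\max_{[u,v]\in\pi_{n}}\tilde w(u,v)\le 2^{-n}\tilde w(s,t)$ and your level-$n$ error is $\lesssim 2^{-n\varepsilon}\tilde w(s,t)^{1/2+\varepsilon}$, a genuinely geometric series; this yields both existence of the limit and the stated estimate with $C_{N,\{\varepsilon_{i}\}}$. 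You then still need the bound uniformly over all partitions $\sigma$ of $[u,v]$, $\sup_{\sigma}\|A^{\sigma}_{u,v}-A_{u,v}\|_{m}\lesssim\sum_{i}w_{i}(u,v)^{1/2+\varepsilon_{i}}$, fed once more into your BDG comparison across the intervals of $\pi$, to compare two arbitrary partitions through a common refinement and obtain \eqref{rsc} uniformly in $t$; your sketch conflates this uniform bound with the dyadic construction, and as written the construction itself is not justified.
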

We now give the following result on $p$-rough stochastic integrals. 
A similar result in Hölder scale can be found in \cite{RSDE3}*{Theorem 3.7}. 
\begin{theorem} \label{rsi}
	Let $\mathbf{X}=\left(X, \mathbb{X}\right) \in \mathscr{C}^{p\text{-}var}\left([0, T], \mathbb{R}^{e}\right)$ 
	and $\left(Y, Y^{\prime}\right) \in \mathbf{D}_{X}^{\left(q, q^{\prime}\right)\text{-}var}L_{m}\left(\left[0, T\right], \Omega; \mathcal{L}\left(\mathbb{R}^{e}, V\right)\right)$ 
	with $\frac{1}{p}+\frac{1}{q}+\frac{1}{q^{\prime}}>1$. 
	Then there exists a continuous adapted $L^{m}$-integrable process 
	$\int_{0}^{\cdot} Y d \mathbf{X}: \Omega \times \left[0, T\right] \rightarrow V$ 
	with the vanishing initial value such that 
	\begin{equation} \label{rsid}
		\lim_{\pi \in \mathcal{P}\left[0, T\right], |\mathcal{\pi}| \rightarrow 0}\left\|\sup_{t \in \left[0, T\right]}\left|\int_{0}^{t} Y d \mathbf{X}-\sum_{\left[u, v\right] \in \mathcal{\pi}, u \leq t}\left(Y_{u}\delta X_{u, v \wedge t}+Y_{u}^{\prime}\mathbb{X}_{u, v \wedge t}\right)\right|\right\|_{m}=0,
	\end{equation}
	and we have 
	\begin{align}
		&\left\|\int_{s}^{t} Y d \mathbf{X}-Y_{s}\delta X_{s, t}-Y_{s}^{\prime}\mathbb{X}_{s, t}\right\|_{m} \notag\\
    	&\quad \lesssim \left(\left\|\delta Y^{M}\right\|_{m,q\text{-}var;\left[s, t\right]}+\left\|R^{Y^{J}}\right\|_{m,\frac{qq^{\prime}}{q+q^{\prime}}\text{-}var;\left[s, t\right]}\right)\left|\delta X\right|_{p\text{-}var;\left[s, t\right]} \label{rsie1}\\
		&\quad \quad+\left\|\delta Y^{\prime}\right\|_{m,q^{\prime}\text{-}var;\left[s, t\right]}\left|\mathbb{X}\right|_{\frac{p}{2}\text{-}var;\left[s, t\right]}, \quad \forall \left(s, t\right) \in \Delta. \notag
	\end{align}
	We call $\int_{0}^{\cdot} Y d \mathbf{X}$ 
    the $p$-rough stochastic integral of $\left(Y, Y^{\prime}\right)$ against $\mathbf{X}$. 
	Moreover, $\left(\int_{0}^{\cdot} Y d \mathbf{X}, Y\right) \in \mathscr{D}_{X}^{\left(p, q\right)\text{-}var}L_{m}\left(\left[0, T\right], \Omega; V\right)$, 
	and for any $K \geq 1$ we have 
	\begin{align*}
		&\left\|\int_{0}^{\cdot} Y d \mathbf{X}, Y\right\|_{\mathbf{D}_{X}^{\left(p, q\right)\text{-}var}L_{m}}^{\left(K\right)}\\
		&\quad \lesssim \left\|Y, Y^{\prime}\right\|_{\mathbf{D}_{X}^{\left(q, q^{\prime}\right)\text{-}var}L_{m}}^{\left(K\right)}\left(\frac{1}{K}+K\left|\mathbf{X}\right|_{p\text{-}var}\right)\left(1+K\left|\mathbf{X}\right|_{p\text{-}var}\right).
	\end{align*}
	As a consequence, the $p$-rough stochastic integral map 
	\begin{equation} \label{rsim}
		\left(Y, Y^{\prime}\right) \mapsto \left(\int_{0}^{\cdot} Y d \mathbf{X}, Y\right)
	\end{equation}
	is a bounded linear map from 
	$\mathbf{D}_{X}^{\left(q, q^{\prime}\right)\text{-}var}L_{m}$ 
	to $\mathscr{D}_{X}^{\left(p, q\right)\text{-}var}L_{m}$. 
\end{theorem}
\begin{proof}
	Define 
	\begin{equation} \label{AMAJ}
		A_{s, t}^{M}:=Y_{s}^{M}\delta X_{s, t}, \quad A_{s, t}^{J}:=Y_{s}^{J}\delta X_{s, t}+Y_{s}^{\prime}\mathbb{X}_{s, t}, \quad \forall \left(s, t\right)\in \Delta.
	\end{equation}
	Clearly, $A^{M}$ and $A^{J}$ are measurable adapted $L^{m}$-integrable 
	two-parameter processes. 
	For every $\left(s, u, t\right) \in \Delta_{2}$, we have 
	$\delta A_{s, u, t}^{M}=-\delta Y_{s, u}^{M}\delta X_{u ,t}$. 
	Then for every $\left(s, u, t\right) \in \Delta_{2}$, we have 
	$\mathbb{E}_{s}\delta A_{s, u, t}^{M}=0$ and 
	\begin{equation*}
		\left\|\sup_{v \in \left[u, t\right]}\left|\delta A_{s, u, v}^{M}\right|\right\|_{m}\leq \left\|\delta Y_{s, u}^{M}\right\|_{m}\sup_{v \in \left[u, t\right]}\left|\delta X_{u, v}\right| \leq \left\|\delta Y^{M}\right\|_{m,q\text{-}var;\left[s, t\right]}\left|\delta X\right|_{p\text{-}var;\left[s, t\right]}.
	\end{equation*}
	By \cite{RPB2}*{Exercise 1.9 and Proposition 5.8}, the map 
	\begin{equation*}
		\left(s, t\right) \mapsto \left\|\delta Y^{M}\right\|_{m,q\text{-}var;\left[s, t\right]}^{\frac{1}{z_{1}}}\left|\delta X\right|_{p\text{-}var;\left[s, t\right]}^{\frac{1}{z_{1}}}
	\end{equation*}
	defines a control, where $z_{1}:=\frac{1}{p}+\frac{1}{q} > \frac{1}{2}$. 
	Then by Lemma \ref{ssl}, 
	there exists a unique continuous adapted $L^{m}$-integrable process 
	$\int_{0}^{\cdot} Y^{M} d \mathbf{X}: \Omega \times \left[0, T\right] \rightarrow V$ 
	with the vanishing initial value such that 
	\begin{equation*}
		\left\|\int_{s}^{t} Y^{M} d \mathbf{X}-A_{s, t}^{M}\right\|_{m} \lesssim \left\|\delta Y^{M}\right\|_{m,q\text{-}var;\left[s, t\right]}\left|\delta X\right|_{p\text{-}var;\left[s, t\right]}, \quad \forall \left(s, t\right) \in \Delta,
	\end{equation*}
	and we have 
	\begin{equation*}
		\lim_{\pi \in \mathcal{P}\left[0, T\right], |\mathcal{\pi}| \rightarrow 0}\left\|\sup_{t \in \left[0, T\right]}\left|\int_{0}^{t} Y^{M} d \mathbf{X}-\sum_{\left[u, v\right] \in \mathcal{\pi}, u \leq t}A_{u, v \wedge t}^{M}\right|\right\|_{m}=0.
	\end{equation*}
	In view of Chen's relation \eqref{Chen}, we have 
	\begin{equation*}
		\delta A_{s, u, t}^{J}=-R_{s, u}^{Y^{J}} \delta X_{u, t}-\delta Y_{s, u}^{\prime} \mathbb{X}_{u, t}, \quad \forall \left(s, u, t\right) \in \Delta_{2}.
	\end{equation*}
	Then for every $\left(s, u, t\right) \in \Delta_{2}$, we have 
	\begin{align*}
		&\left\|\sup_{v \in \left[u, t\right]}\left|\delta A_{s, u, v}^{J}\right|\right\|_{m}\\
		&\quad \leq \left\|R_{s, u}^{Y^{J}}\right\|_{m}\sup_{v \in \left[u, t\right]}\left|\delta X_{u, v}\right|+\left\|\delta Y_{s, u}^{\prime}\right\|_{m}\sup_{v \in \left[u, t\right]}\left|\mathbb{X}_{u, v}\right|\\
		&\quad \leq \left\|R^{Y^{J}}\right\|_{m,\frac{qq^{\prime}}{q+q^{\prime}}\text{-}var;\left[s, t\right]}\left|\delta X\right|_{p\text{-}var;\left[s, t\right]}+\left\|\delta Y^{\prime}\right\|_{m,q^{\prime}\text{-}var;\left[s, t\right]}\left|\mathbb{X}\right|_{\frac{p}{2}\text{-}var;\left[s, t\right]}\\
		&\quad \leq \left(\left\|R^{Y^{J}}\right\|_{m,\frac{qq^{\prime}}{q+q^{\prime}}\text{-}var;\left[s, t\right]}^{\frac{1}{z_{2}}}\left|\delta X\right|_{p\text{-}var;\left[s, t\right]}^{\frac{1}{z_{2}}}+\left\|\delta Y^{\prime}\right\|_{m,q^{\prime}\text{-}var;\left[s, t\right]}^{\frac{1}{z_{2}}}\left|\mathbb{X}\right|_{\frac{p}{2}\text{-}var;\left[s, t\right]}^{\frac{1}{z_{2}}}\right)^{z_{2}},
	\end{align*}
	where $z_{2}:=\frac{1}{p}+\frac{1}{q}+\frac{1}{q^{\prime}} > 1$. By \cite{RPB2}*{Exercise 1.9 and Proposition 5.8}, the map 
	\begin{equation*}
		\left(s, t\right) \mapsto \left\|R^{Y^{J}}\right\|_{m,\frac{qq^{\prime}}{q+q^{\prime}}\text{-}var;\left[s, t\right]}^{\frac{1}{z_{2}}}\left|\delta X\right|_{p\text{-}var;\left[s, t\right]}^{\frac{1}{z_{2}}}+\left\|\delta Y^{\prime}\right\|_{m,q^{\prime}\text{-}var;\left[s, t\right]}^{\frac{1}{z_{2}}}\left|\mathbb{X}\right|_{\frac{p}{2}\text{-}var;\left[s, t\right]}^{\frac{1}{z_{2}}}
	\end{equation*}
	defines a control. Then by Lemma \ref{sl}, 
	there exists a unique continuous adapted $L^{m}$-integrable process 
	$\int_{0}^{\cdot} Y^{J} d \mathbf{X}: \Omega \times \left[0, T\right] \rightarrow V$ 
	with the vanishing initial value such that 
	\begin{align*}
		&\left\|\int_{s}^{t} Y^{J} d \mathbf{X}-A_{s, t}^{J}\right\|_{m}\\
		&\quad \lesssim \left(\left\|R^{Y^{J}}\right\|_{m,\frac{qq^{\prime}}{q+q^{\prime}}\text{-}var;\left[s, t\right]}^{\frac{1}{z_{2}}}\left|\delta X\right|_{p\text{-}var;\left[s, t\right]}^{\frac{1}{z_{2}}}+\left\|\delta Y^{\prime}\right\|_{m,q^{\prime}\text{-}var;\left[s, t\right]}^{\frac{1}{z_{2}}}\left|\mathbb{X}\right|_{\frac{p}{2}\text{-}var;\left[s, t\right]}^{\frac{1}{z_{2}}}\right)^{z_{2}}\\
    	&\quad \lesssim \left\|R^{Y^{J}}\right\|_{m,\frac{qq^{\prime}}{q+q^{\prime}}\text{-}var;\left[s, t\right]}\left|\delta X\right|_{p\text{-}var;\left[s, t\right]}+\left\|\delta Y^{\prime}\right\|_{m,q^{\prime}\text{-}var;\left[s, t\right]}\left|\mathbb{X}\right|_{\frac{p}{2}\text{-}var;\left[s, t\right]}, 
	\end{align*}
	for every $\left(s, t\right) \in \Delta$, and we have 
	\begin{equation*}
		\lim_{\pi \in \mathcal{P}\left[0, T\right], |\mathcal{\pi}| \rightarrow 0}\left\|\sup_{t \in \left[0, T\right]}\left|\int_{0}^{t} Y^{J} d \mathbf{X}-\sum_{\left[u, v\right] \in \mathcal{\pi}, u \leq t}A_{u, v \wedge t}^{J}\right|\right\|_{m}=0.
	\end{equation*}
	Therefore, 
	$\int_{0}^{\cdot} Y d \mathbf{X}:=\int_{0}^{\cdot} Y^{M} d \mathbf{X}+\int_{0}^{\cdot} Y^{J} d \mathbf{X}$ 
	is a continuous adapted $L^{m}$-integrable process with the vanishing initial value 
	satisfying \eqref{rsid}, and the estimate \eqref{rsie1} holds.\\
	\indent
	Note that 
	\begin{equation*}
		R_{s, t}^{\int_{0}^{\cdot} Y d \mathbf{X}}=\int_{s}^{t} Y d \mathbf{X}-Y_{s} \delta X_{s, t}, \quad \forall \left(s, t\right) \in \Delta. 
	\end{equation*}
	In view of \eqref{rsie1} and 
	\begin{equation*}
		\left\|Y_{s}^{\prime}\mathbb{X}_{s, t}\right\|_{m} \leq \left\|Y_{s}^{\prime}\right\|_{m}\left|\mathbb{X}_{s, t}\right| \leq \sup_{r \in \left[0, T\right]}\left\|Y_{r}^{\prime}\right\|_{m}\left|\mathbb{X}\right|_{\frac{p}{2}\text{-}var;\left[s, t\right]}, \quad \forall \left(s, t\right) \in \Delta, 
	\end{equation*}
	we have 
	\begin{align*}
		\left\|R_{s, t}^{\int_{0}^{\cdot} Y d \mathbf{X}}\right\|_{m} &\leq \left\|\int_{s}^{t} Y d \mathbf{X}-Y_{s}\delta X_{s, t}-Y_{s}^{\prime}\mathbb{X}_{s, t}\right\|_{m}+\left\|Y_{s}^{\prime}\mathbb{X}_{s, t}\right\|_{m}\\
		&\lesssim \left(\left\|\delta Y^{M}\right\|_{m,q\text{-}var;\left[s, t\right]}+\left\|R^{Y^{J}}\right\|_{m,\frac{qq^{\prime}}{q+q^{\prime}}\text{-}var;\left[s, t\right]}\right)\left|\delta X\right|_{p\text{-}var;\left[s, t\right]}\\
		&\quad+\left(\sup_{r \in \left[0, T\right]}\left\|Y_{r}^{\prime}\right\|_{m}+\left\|\delta Y^{\prime}\right\|_{m,q^{\prime}\text{-}var}\right)\left|\mathbb{X}\right|_{\frac{p}{2}\text{-}var;\left[s, t\right]}, \quad \forall \left(s, t\right) \in \Delta.
	\end{align*}
	Then by Proposition \ref{enorm}, we have 
	$R^{\int_{0}^{\cdot} Y d \mathbf{X}} \in C_{2}^{\frac{pq}{p+q}\text{-}var}L_{m}$ and 
	\begin{align*}
		\left\|R^{\int_{0}^{\cdot} Y d \mathbf{X}}\right\|_{m,\frac{pq}{p+q}\text{-}var} &\lesssim \left(\left\|\delta Y^{M}\right\|_{m,q\text{-}var}+\left\|R^{Y^{J}}\right\|_{m,\frac{qq^{\prime}}{q+q^{\prime}}\text{-}var}\right)\left|\delta X\right|_{p\text{-}var}\\
		&\quad+\left(\sup_{r \in \left[0, T\right]}\left\|Y_{r}^{\prime}\right\|_{m}+\left\|\delta Y^{\prime}\right\|_{m,q^{\prime}\text{-}var}\right)\left|\mathbb{X}\right|_{\frac{p}{2}\text{-}var}\\
		&\lesssim \left\|Y, Y^{\prime}\right\|_{\mathbf{D}_{X}^{\left(q, q^{\prime}\right)\text{-}var}L_{m}}^{\left(K\right)}\left|\mathbf{X}\right|_{p\text{-}var}\left(1+\left|\mathbf{X}\right|_{p\text{-}var}\right).
	\end{align*}
	Analogous to \eqref{YJe}, we have 
	\begin{align*}
		\left\|\delta \int_{0}^{\cdot} Y d \mathbf{X}\right\|_{m,p\text{-}var} &\lesssim \sup_{r \in \left[0, T\right]}\left\|Y_{r}\right\|_{m}\left|\delta X\right|_{p\text{-}var}+\left\|R^{\int_{0}^{\cdot} Y d \mathbf{X}}\right\|_{m,\frac{pq}{p+q}\text{-}var}\\
		&\lesssim \left\|Y\right\|_{C^{q\text{-}var}L_{m}}\left|\mathbf{X}\right|_{p\text{-}var}+\left\|R^{\int_{0}^{\cdot} Y d \mathbf{X}}\right\|_{m,\frac{pq}{p+q}\text{-}var}.
	\end{align*}
	Therefore, we have 
	$\left(\int_{0}^{\cdot} Y d \mathbf{X}, Y\right) \in \mathscr{D}_{X}^{\left(p, q\right)\text{-}var}L_{m}$ 
	and 
	\begin{align*}
		&\left\|\int_{0}^{\cdot} Y d \mathbf{X}, Y\right\|_{\mathbf{D}_{X}^{\left(p, q\right)\text{-}var}L_{m}}^{\left(K\right)}\\
		&\quad \lesssim K\left\|\delta \int_{0}^{\cdot} Y d \mathbf{X}\right\|_{m,p\text{-}var}+\left\|Y\right\|_{C^{q\text{-}var}L_{m}}+K\left\|R^{\int_{0}^{\cdot} Y d \mathbf{X}}\right\|_{m,\frac{pq}{p+q}\text{-}var}\\
		&\quad \lesssim \left\|Y, Y^{\prime}\right\|_{\mathbf{D}_{X}^{\left(q, q^{\prime}\right)\text{-}var}L_{m}}^{\left(K\right)}\left(\frac{1}{K}+K\left|\mathbf{X}\right|_{p\text{-}var}\right)\left(1+K\left|\mathbf{X}\right|_{p\text{-}var}\right).
	\end{align*}
\end{proof}
\begin{remark}
	In the above proof, we apply Lemmas \ref{ssl} and \ref{sl} to processes 
	$A^{M}$ and $A^{J}$ respectively to obtain 
	$\int_{0}^{\cdot} Y^{M} d \mathbf{X}$ and $\int_{0}^{\cdot} Y^{J} d \mathbf{X}$,  
	and then define 
	$\int_{0}^{\cdot} Y d \mathbf{X}:=\int_{0}^{\cdot} Y^{M} d \mathbf{X}+\int_{0}^{\cdot} Y^{J} d \mathbf{X}$. 
	In fact, we can directly apply \cite{SSL2}*{Theorem 3.1} to the process 
	$A:=A^{M}+A^{Y}$ to obtain $\int_{0}^{\cdot} Y d \mathbf{X}$. 
	However, $\left\|\delta A_{s, u, t}\right\|_{m}$ and 
	$\left\|\mathbb{E}_{s}\delta A_{s, u, t}\right\|_{m}$ are 
	controlled by two different controls, 
	i.e. there exist positive constants $\varepsilon_{1}, \varepsilon_{2}$ 
	and controls $w_{1} \neq w_{2}$ such that 
	\begin{equation*}
		\left\|\delta A_{s, u, t}\right\|_{m} \leq w_{1}\left(s, t\right)^{\frac{1}{2}+\varepsilon_{1}}, \quad \left\|\mathbb{E}_{s}\delta A_{s, u, t}\right\|_{m} \leq w_{2}\left(s, t\right)^{1+\varepsilon_{2}}, \quad \forall \left(s, u ,t\right) \in \Delta_{2}.
	\end{equation*}
	Then we need to replace $w$ with $w_{1}+w_{2}$ in \cite{SSL2}*{Theorem 3.1}. 
	This method is difficult to get the boundedness of $p$-rough stochastic integral map \eqref{rsim}. 
\end{remark}
Then we state the stability of $p$-rough stochastic integrals. 
\begin{theorem} \label{rsistab}
	Let $\mathbf{X}=\left(X, \mathbb{X}\right), \bar{\mathbf{X}}=\left(\bar{X}, \bar{\mathbb{X}}\right) \in \mathscr{C}^{p\text{-}var}$, 
	$\left(Y, Y^{\prime}\right) \in \mathbf{D}_{X}^{\left(q, q^{\prime}\right)\text{-}var}L_{m}$ 
	and $\left(\bar{Y}, \bar{Y}^{\prime}\right) \in \mathbf{D}_{\bar{X}}^{\left(q, q^{\prime}\right)\text{-}var}L_{m}$ 
	with $\frac{1}{p}+\frac{1}{q}+\frac{1}{q^{\prime}}>1$, 
	such that 
	\begin{equation*}
		\left|\mathbf{X}\right|_{p\text{-}var} \vee \left|\bar{\mathbf{X}}\right|_{p\text{-}var} \leq \varepsilon, \quad \left\|\left(Y, Y^{\prime}\right)\right\|_{\mathbf{D}_{X}^{\left(q, q^{\prime}\right)\text{-}var}L_{m}}^{\left(1\right)} \vee \left\|\left(\bar{Y}, \bar{Y}^{\prime}\right)\right\|_{\mathbf{D}_{\bar{X}}^{\left(q, q^{\prime}\right)\text{-}var}L_{m}}^{\left(1\right)} \leq M, 
	\end{equation*}
	for some positive real numbers $\varepsilon$ and $M$. 
	Then for any $K \geq 1$, we have 
	\begin{align*}
		&\left\|\int_{0}^{\cdot} Y d \mathbf{X}, Y; \int_{0}^{\cdot} \bar{Y} d \bar{\mathbf{X}}, \bar{Y}\right\|_{\mathbf{D}_{X, \bar{X}}^{\left(p, q\right)\text{-}var}L_{m}}^{\left(K\right)}\\
		&\quad \lesssim K\left(1+\varepsilon\right)M\rho_{p\text{-}var}\left(\mathbf{X}, \bar{\mathbf{X}}\right)+\left(\frac{1}{K}+K\varepsilon\right)\left(1+K\varepsilon\right)\left\|Y, Y^{\prime};\bar{Y}, \bar{Y}^{\prime}\right\|_{\mathbf{D}_{X, \bar{X}}^{\left(q, q^{\prime}\right)\text{-}var}L_{m}}^{\left(K\right)}.
	\end{align*}
\end{theorem}
\begin{proof}
	Like defining $A^{M}$ and $A^{J}$ by \eqref{AMAJ}, define 
	\begin{equation*}
		\bar{A}_{s, t}^{M}:=\bar{Y}_{s}^{M}\delta \bar{X}_{s, t}, \quad \bar{A}_{s, t}^{J}:=\bar{Y}_{s}^{J}\delta \bar{X}_{s, t}+\bar{Y}_{s}^{\prime}\bar{\mathbb{X}}_{s, t}, \quad \forall \left(s, t\right)\in \Delta.
	\end{equation*}
	Put $\Delta Y:=Y-\bar{Y}$ and similarly for other functions. 
	For every $\left(s, u, t\right) \in \Delta_{2}$, we have 
	\begin{equation*}
		\delta \Delta A_{s, u ,t}^{M}=-\delta Y_{s, u}^{M}\delta X_{u ,t}+\delta \bar{Y}_{s, u}^{M}\delta \bar{X}_{u ,t}=-\delta Y_{s, u}^{M}\delta \Delta X_{u ,t}-\delta \Delta Y_{s, u}^{M}\delta \bar{X}_{u ,t}. 
	\end{equation*}
	Then for every $\left(s, u, t\right) \in \Delta_{2}$, we have $\mathbb{E}_{s}\delta \Delta A_{s, u ,t}^{M}=0$ and 
	\begin{align*}
		\left\|\sup_{v \in \left[u, t\right]}\left|\delta \Delta A_{s, u, v}^{M}\right|\right\|_{m} &\leq \left\|\delta Y^{M}\right\|_{m,q\text{-}var;\left[s, t\right]}\left|\delta \Delta X\right|_{p\text{-}var;\left[s, t\right]}\\
		&\quad+\left\|\delta \Delta Y^{M}\right\|_{m,q\text{-}var;\left[s, t\right]}\left|\delta \bar{X}\right|_{p\text{-}var;\left[s, t\right]}.
	\end{align*}
	By Lemma \ref{ssl}, we have 
	\begin{align*}
		\left\|\Delta \int_{s}^{t} Y^{M} d \mathbf{X}-\Delta A_{s, t}^{M}\right\|_{m} &\lesssim \left\|\delta Y^{M}\right\|_{m,q\text{-}var;\left[s, t\right]}\left|\delta \Delta X\right|_{p\text{-}var;\left[s, t\right]}\\
		&\quad+\left\|\delta \Delta Y^{M}\right\|_{m,q\text{-}var;\left[s, t\right]}\left|\delta \bar{X}\right|_{p\text{-}var;\left[s, t\right]}, \quad \forall \left(s, t\right) \in \Delta. 
	\end{align*}
	In view of Chen's relation \eqref{Chen}, we have 
	\begin{align*}
		\delta \Delta A_{s, u, t}^{J}&=-R_{s, u}^{Y^{J}} \delta X_{u, t}-\delta Y_{s, u}^{\prime} \mathbb{X}_{u, t}+R_{s, u}^{\bar{Y}^{J}} \delta \bar{X}_{u, t}+\delta \bar{Y}_{s, u}^{\prime} \bar{\mathbb{X}}_{u, t}\\
		&=-R_{s, u}^{Y^{J}} \delta \Delta X_{u, t}-\Delta R_{s, u}^{Y^{J}} \delta \bar{X}_{u, t}-\delta Y_{s, u}^{\prime} \Delta \mathbb{X}_{u, t}-\delta \Delta Y_{s, u}^{\prime} \bar{\mathbb{X}}_{u, t}, 
	\end{align*}
	for every $\left(s, u, t\right) \in \Delta_{2}$. 
	Then for every $\left(s, u, t\right) \in \Delta_{2}$, we have 
	\begin{align*}
		&\left\|\sup_{v \in \left[u, t\right]}\left|\delta \Delta A_{s, u, v}^{J}\right|\right\|_{m}\\
    	&\quad \leq \left\|R^{Y^{J}}\right\|_{m,\frac{qq^{\prime}}{q+q^{\prime}}\text{-}var;\left[s, t\right]}\left|\delta \Delta X\right|_{p\text{-}var;\left[s, t\right]}+\left\|\Delta R^{Y^{J}}\right\|_{m,\frac{qq^{\prime}}{q+q^{\prime}}\text{-}var;\left[s, t\right]}\left|\delta \bar{X}\right|_{p\text{-}var;\left[s, t\right]}\\
		&\quad \quad+\left\|\delta Y^{\prime}\right\|_{m,q^{\prime}\text{-}var;\left[s, t\right]}\left|\Delta \mathbb{X}\right|_{\frac{p}{2}\text{-}var;\left[s, t\right]}+\left\|\delta \Delta Y^{\prime}\right\|_{m,q^{\prime}\text{-}var;\left[s, t\right]}\left|\bar{\mathbb{X}}\right|_{\frac{p}{2}\text{-}var;\left[s, t\right]}.
	\end{align*}
	By Lemma \ref{sl}, we have 
	\begin{align*}
		&\left\|\Delta \int_{s}^{t} Y^{J} d \mathbf{X}-\Delta A_{s, t}^{J}\right\|_{m}\\
		&\quad \lesssim \left\|R^{Y^{J}}\right\|_{m,\frac{qq^{\prime}}{q+q^{\prime}}\text{-}var;\left[s, t\right]}\left|\delta \Delta X\right|_{p\text{-}var;\left[s, t\right]}+\left\|\Delta R^{Y^{J}}\right\|_{m,\frac{qq^{\prime}}{q+q^{\prime}}\text{-}var;\left[s, t\right]}\left|\delta \bar{X}\right|_{p\text{-}var;\left[s, t\right]}\\
    	&\quad \quad+\left\|\delta Y^{\prime}\right\|_{m,q^{\prime}\text{-}var;\left[s, t\right]}\left|\Delta \mathbb{X}\right|_{\frac{p}{2}\text{-}var;\left[s, t\right]}+\left\|\delta \Delta Y^{\prime}\right\|_{m,q^{\prime}\text{-}var;\left[s, t\right]}\left|\bar{\mathbb{X}}\right|_{\frac{p}{2}\text{-}var;\left[s, t\right]},
	\end{align*}
	for every $\left(s, t\right) \in \Delta$. 
	Moreover, for every $\left(s, t\right) \in \Delta$ we have 
	\begin{equation*}
		\left\|Y_{s}^{\prime}\mathbb{X}_{s, t}-\bar{Y}_{s}^{\prime}\bar{\mathbb{X}}_{s, t}\right\|_{m} \leq \sup_{r \in \left[0, T\right]}\left\|Y_{r}^{\prime}\right\|_{m}\left|\Delta \mathbb{X}\right|_{\frac{p}{2}\text{-}var;\left[s, t\right]}+\sup_{r \in \left[0, T\right]}\left\|\Delta Y_{r}^{\prime}\right\|_{m}\left|\bar{\mathbb{X}}\right|_{\frac{p}{2}\text{-}var;\left[s, t\right]}.
	\end{equation*}
	Then we have 
	\begin{align*}
		&\left\|\Delta R_{s, t}^{\int_{0}^{\cdot} Y d \mathbf{X}}\right\|_{m}\\
		&\quad \leq \left\|\Delta \int_{s}^{t} Y^{M} d \mathbf{X}-\Delta A_{s, t}^{M}\right\|_{m}+\left\|\Delta \int_{s}^{t} Y^{J} d \mathbf{X}-\Delta A_{s, t}^{J}\right\|_{m}+\left\|Y_{s}^{\prime}\mathbb{X}_{s, t}-\bar{Y}_{s}^{\prime}\bar{\mathbb{X}}_{s, t}\right\|_{m}\\
		&\quad \lesssim \left(\left\|\delta Y^{M}\right\|_{m,q\text{-}var;\left[s, t\right]}+\left\|R^{Y^{J}}\right\|_{m,\frac{qq^{\prime}}{q+q^{\prime}}\text{-}var;\left[s, t\right]}\right)\left|\delta \Delta X\right|_{p\text{-}var;\left[s, t\right]}\\
		&\quad \quad+\left(\sup_{r \in \left[0, T\right]}\left\|Y_{r}^{\prime}\right\|_{m}+\left\|\delta Y^{\prime}\right\|_{m,q^{\prime}\text{-}var}\right)\left|\Delta \mathbb{X}\right|_{\frac{p}{2}\text{-}var;\left[s, t\right]}\\
		&\quad \quad+\left(\left\|\delta \Delta Y^{M}\right\|_{m,q\text{-}var;\left[s, t\right]}+\left\|\Delta R^{Y^{J}}\right\|_{m,\frac{qq^{\prime}}{q+q^{\prime}}\text{-}var;\left[s, t\right]}\right)\left|\delta \bar{X}\right|_{p\text{-}var;\left[s, t\right]}\\
		&\quad \quad+\left(\sup_{r \in \left[0, T\right]}\left\|\Delta Y_{r}^{\prime}\right\|_{m}+\left\|\delta \Delta Y^{\prime}\right\|_{m,q^{\prime}\text{-}var}\right)\left|\bar{\mathbb{X}}\right|_{\frac{p}{2}\text{-}var;\left[s, t\right]}, \quad \forall \left(s, t\right) \in \Delta.
	\end{align*}
	By Propositions \ref{enorm} and \ref{edist}, we have 
	\begin{align*}
		\left\|\Delta R^{\int_{0}^{\cdot} Y d \mathbf{X}}\right\|_{m,\frac{pq}{p+q}\text{-}var} &\lesssim \left(\left\|\delta Y^{M}\right\|_{m,q\text{-}var}+\left\|R^{Y^{J}}\right\|_{m,\frac{qq^{\prime}}{q+q^{\prime}}\text{-}var}\right)\left|\delta \Delta X\right|_{p\text{-}var}\\
		&\quad+\left(\sup_{r \in \left[0, T\right]}\left\|Y_{r}^{\prime}\right\|_{m}+\left\|\delta Y^{\prime}\right\|_{m,q^{\prime}\text{-}var}\right)\left|\Delta \mathbb{X}\right|_{\frac{p}{2}\text{-}var}\\
		&\quad+\left(\left\|\delta \Delta Y^{M}\right\|_{m,q\text{-}var}+\left\|\Delta R^{Y^{J}}\right\|_{m,\frac{qq^{\prime}}{q+q^{\prime}}\text{-}var}\right)\left|\delta \bar{X}\right|_{p\text{-}var}\\
		&\quad+\left(\sup_{r \in \left[0, T\right]}\left\|\Delta Y_{r}^{\prime}\right\|_{m}+\left\|\delta \Delta Y^{\prime}\right\|_{m,q^{\prime}\text{-}var}\right)\left|\bar{\mathbb{X}}\right|_{\frac{p}{2}\text{-}var}\\
		&\lesssim \left(1+\varepsilon\right)\left(M\rho_{p\text{-}var}\left(\mathbf{X}, \bar{\mathbf{X}}\right)+\varepsilon\left\|Y, Y^{\prime};\bar{Y}, \bar{Y}^{\prime}\right\|_{\mathbf{D}_{X, \bar{X}}^{\left(q, q^{\prime}\right)\text{-}var}L_{m}}^{\left(K\right)}\right).
	\end{align*}
	Analogous to \eqref{DYJe}, we have 
	\begin{align*}
		&\left\|\delta \Delta \int_{0}^{\cdot} Y d \mathbf{X}\right\|_{m,p\text{-}var}\\
		&\quad \lesssim \sup_{r \in \left[0, T\right]}\left\|Y_{r}\right\|_{m}\left|\delta \Delta X\right|_{p\text{-}var}+\sup_{r \in \left[0, T\right]}\left\|\Delta Y_{r}\right\|_{m}\left|\delta \bar{X}\right|_{p\text{-}var}+\left\|\Delta R^{\int_{0}^{\cdot} Y d \mathbf{X}}\right\|_{m,\frac{pq}{p+q}\text{-}var}\\
		&\quad \lesssim M\rho_{p\text{-}var}\left(\mathbf{X}, \bar{\mathbf{X}}\right)+\varepsilon\left\|\Delta Y\right\|_{C^{q\text{-}var}L_{m}}+\left\|\Delta R^{\int_{0}^{\cdot} Y d \mathbf{X}}\right\|_{m,\frac{pq}{p+q}\text{-}var}.
	\end{align*}
	Therefore, we have 
	\begin{align*}
		&\left\|\int_{0}^{\cdot} Y d \mathbf{X}, Y; \int_{0}^{\cdot} \bar{Y} d \bar{\mathbf{X}}, \bar{Y}\right\|_{\mathbf{D}_{X, \bar{X}}^{\left(p, q\right)\text{-}var}L_{m}}^{\left(K\right)}\\
		&\quad \lesssim K\left\|\delta \Delta \int_{0}^{\cdot} Y d \mathbf{X}\right\|_{m,p\text{-}var}+\left\|\Delta Y\right\|_{C^{q\text{-}var}L_{m}}+K\left\|\Delta R^{\int_{0}^{\cdot} Y d \mathbf{X}}\right\|_{m,\frac{pq}{p+q}\text{-}var}\\
		&\quad \lesssim K\left(1+\varepsilon\right)M\rho_{p\text{-}var}\left(\mathbf{X}, \bar{\mathbf{X}}\right)+\left(\frac{1}{K}+K\varepsilon\right)\left(1+K\varepsilon\right)\left\|Y, Y^{\prime};\bar{Y}, \bar{Y}^{\prime}\right\|_{\mathbf{D}_{X, \bar{X}}^{\left(q, q^{\prime}\right)\text{-}var}L_{m}}^{\left(K\right)}.
	\end{align*}
\end{proof}
\subsection{Composition of stochastic controlled rough paths} \label{S4.3}
Next, we show that the composition of two stochastic controlled rough paths 
is again a stochastic controlled rough path but of lower integrability.
\begin{proposition} \label{scrplc}
	Let $m_{1}, m_{2} \in \left[1, \infty\right]$, 
	$\left(Y, Y^{\prime}\right) \in \mathbf{D}_{X}^{\left(q, q^{\prime}\right)\text{-}var}L_{m_{1}}\left(\left[0, T\right], \Omega; V\right)$ and 
	$\left(G, G^{\prime}\right) \in \mathbf{D}_{X}^{\left(q, q^{\prime}\right)\text{-}var}L_{m_{2}}\left(\left[0, T\right], \Omega; \mathcal{L}\left(V, \bar{V}\right)\right)$ 
	with $q \leq q^{\prime}$. 
	Then $\left(GY, GY^{\prime}+G^{\prime}Y\right) \in \mathbf{D}_{X}^{\left(q, q^{\prime}\right)\text{-}var}L_{m_{3}}\left(\left[0, T\right], \Omega; \bar{V}\right)$ 
	with $m_{3}:=\frac{m_{1}m_{2}}{m_{1}+m_{2}}$, and for any $K \geq 1$ we have 
	\begin{equation} \label{scrplce}
		\left\|GY, GY^{\prime}+G^{\prime}Y\right\|_{\mathbf{D}_{X}^{\left(q, q^{\prime}\right)\text{-}var}L_{m_{3}}}^{\left(K\right)} \lesssim \left\|G, G^{\prime}\right\|_{\mathbf{D}_{X}^{\left(q, q^{\prime}\right)\text{-}var}L_{m_{2}}}^{\left(1\right)}\left\|Y, Y^{\prime}\right\|_{\mathbf{D}_{X}^{\left(q, q^{\prime}\right)\text{-}var}L_{m_{1}}}^{\left(K\right)}.
	\end{equation}
\end{proposition}
\begin{proof}
	Applying Hölder's inequality, we have 
	\begin{equation*}
		\left\|G_{T}Y_{T}\right\|_{m_{3}} \leq \left\|G_{T}\right\|_{m_{2}}\left\|Y_{T}\right\|_{m_{1}}
	\end{equation*}
	and 
	\begin{align*}
		\left\|\delta \left(GY\right)_{s, t}\right\|_{m_{3}} &\leq \left\|G_{s}\right\|_{m_{2}}\left\|\delta Y_{s, t}\right\|_{m_{1}}+\left\|\delta G_{s, t}\right\|_{m_{2}}\left\|Y_{t}\right\|_{m_{1}}\\
		&\leq \sup_{r \in \left[0, T\right]}\left\|G_{r}^{\prime}\right\|_{m_{2}}\left\|\delta Y\right\|_{m_{1},q\text{-}var;\left[s, t\right]}+\left\|\delta G\right\|_{m_{2},q\text{-}var;\left[s, t\right]}\sup_{r \in \left[0, T\right]}\left\|Y_{r}^{\prime}\right\|_{m_{1}},
	\end{align*}
	for every $\left(s, t\right) \in \Delta$. 
	Then we have $GY \in C^{q\text{-}var}L_{m_{3}}$ and 
	\begin{align*}
		\left\|GY\right\|_{C^{q\text{-}var}L_{m_{3}}} &\lesssim \left\|G_{T}\right\|_{m_{2}}\left\|Y_{T}\right\|_{m_{1}}+\sup_{r \in \left[0, T\right]}\left\|G_{r}^{\prime}\right\|_{m_{2}}\left\|\delta Y\right\|_{m_{1},q\text{-}var}\\
		&\quad+\left\|\delta G\right\|_{m_{2},q\text{-}var}\sup_{r \in \left[0, T\right]}\left\|Y_{r}^{\prime}\right\|_{m_{1}}\\
		&\lesssim \left\|G\right\|_{C^{q\text{-}var}L_{m_{2}}}\left\|Y\right\|_{C^{q\text{-}var}L_{m_{1}}}.
	\end{align*}
	In a similar way, we have $GY^{\prime}+G^{\prime}Y \in C^{q^{\prime}\text{-}var}L_{m_{3}}$ and \
	\begin{align*}
		\left\|GY^{\prime}+G^{\prime}Y\right\|_{C^{q^{\prime}\text{-}var}L_{m_{3}}} &\lesssim \left\|G\right\|_{C^{q\text{-}var}L_{m_{2}}}\left\|Y^{\prime}\right\|_{C^{q^{\prime}\text{-}var}L_{m_{1}}}\\
		&\quad+\left\|G^{\prime}\right\|_{C^{q^{\prime}\text{-}var}L_{m_{2}}}\left\|Y\right\|_{C^{q\text{-}var}L_{m_{1}}}.
	\end{align*}
	For every $\left(s, t\right) \in \Delta$, since 
	\begin{equation*}
		R_{s, t}^{GY}=G_{t}Y_{t}-G_{s}Y_{s}-\left(G_{s}Y_{s}^{\prime}+G_{s}^{\prime}Y_{s}\right)\delta X_{s, t}=\delta G_{s, t}\delta Y_{s, t}+G_{s}R_{s, t}^{Y}+R_{s, t}^{G}Y_{s}, 
	\end{equation*}
	we have 
	\begin{align*}
		\left\|\mathbb{E}_{s}R_{s, t}^{GY}\right\|_{m_{3}} &\leq \left\|\delta G\right\|_{m_{2},q\text{-}var;\left[s, t\right]}\left\|\delta Y\right\|_{m_{1},q\text{-}var;\left[s, t\right]}\\
		&\quad+\left\|\mathbb{E}_{\cdot}R^{G}\right\|_{m_{2},\frac{qq^{\prime}}{q+q^{\prime}}\text{-}var;\left[s, t\right]}\sup_{r \in \left[0, T\right]}\left\|Y_{r}\right\|_{m_{1}}\\
		&\quad+\sup_{r \in \left[0, T\right]}\left\|G_{r}\right\|_{m_{2}}\left\|\mathbb{E}_{\cdot}R^{Y}\right\|_{m_{1},\frac{qq^{\prime}}{q+q^{\prime}}\text{-}var;\left[s, t\right]},
	\end{align*}
	which gives $\mathbb{E}_{\cdot}R^{GY} \in C_{2}^{\frac{qq^{\prime}}{q+q^{\prime}}\text{-}var}L_{m_{3}}$ and 
	\begin{align*}
		\left\|\mathbb{E}_{\cdot}R^{GY}\right\|_{m_{3},\frac{qq^{\prime}}{q+q^{\prime}}\text{-}var} &\lesssim \left\|\delta G\right\|_{m_{2},q\text{-}var}\left\|\delta Y\right\|_{m_{1},q\text{-}var}\\
		&\quad+\left\|\mathbb{E}_{\cdot}R^{G}\right\|_{m_{2},\frac{qq^{\prime}}{q+q^{\prime}}\text{-}var}\sup_{r \in \left[0, T\right]}\left\|Y_{r}\right\|_{m_{1}}\\
		&\quad+\sup_{r \in \left[0, T\right]}\left\|G_{r}\right\|_{m_{2}}\left\|\mathbb{E}_{\cdot}R^{Y}\right\|_{m_{1},\frac{qq^{\prime}}{q+q^{\prime}}\text{-}var}.
	\end{align*}
	Therefore, $\left(GY, GY^{\prime}+G^{\prime}Y\right) \in \mathbf{D}_{X}^{\left(q, q^{\prime}\right)\text{-}var}L_{m_{3}}$ 
	and the estimate \eqref{scrplce} holds. 
\end{proof}
We finish this section with the following  stability of composition, 
whose  proof is analogous to that of estimate \eqref{scrplce}. 
\begin{proposition} \label{lcstab}
	Let $X, \bar{X} \in C^{p\text{-}var}$, 
	$\left(Y, Y^{\prime}\right) \in \mathbf{D}_{X}^{\left(q, q^{\prime}\right)\text{-}var}L_{m_{1}}$, 
	$\left(\bar{Y}, \bar{Y}^{\prime}\right) \in \mathbf{D}_{\bar{X}}^{\left(q, q^{\prime}\right)\text{-}var}L_{m_{1}}$, 
	$\left(G, G^{\prime}\right) \in \mathbf{D}_{X}^{\left(q, q^{\prime}\right)\text{-}var}L_{m_{2}}$ and 
	$\left(\bar{G}, \bar{G}^{\prime}\right) \in \mathbf{D}_{\bar{X}}^{\left(q, q^{\prime}\right)\text{-}var}L_{m_{2}}$ 
	with $q \leq q^{\prime}$ and $m_{1}, m_{2} \in \left[1, \infty\right]$. 
	Define $m_{3}:=\frac{m_{1}m_{2}}{m_{1}+m_{2}}$. 
	Then for any $K \geq 1$, we have 
	\begin{align*}
		&\left\|GY, GY^{\prime}+G^{\prime}Y;\bar{G}\bar{Y}, \bar{G}\bar{Y}^{\prime}+\bar{G}^{\prime}\bar{Y}\right\|_{\mathbf{D}_{X, \bar{X}}^{\left(q, q^{\prime}\right)\text{-}var}L_{m_{3}}}^{\left(K\right)}\\
		&\quad \lesssim \left\|G, G^{\prime}\right\|_{\mathbf{D}_{X}^{\left(q, q^{\prime}\right)\text{-}var}L_{m_{2}}}^{\left(1\right)}\left\|Y, Y^{\prime};\bar{Y}, \bar{Y}^{\prime}\right\|_{\mathbf{D}_{X, \bar{X}}^{\left(q, q^{\prime}\right)\text{-}var}L_{m_{1}}}^{\left(K\right)}\\
		&\quad \quad+\left\|G, G^{\prime};\bar{G}, \bar{G}^{\prime}\right\|_{\mathbf{D}_{X, \bar{X}}^{\left(q, q^{\prime}\right)\text{-}var}L_{m_{2}}}^{\left(1\right)}\left\|Y, Y^{\prime}\right\|_{\mathbf{D}_{X}^{\left(q, q^{\prime}\right)\text{-}var}L_{m_{1}}}^{\left(K\right)}.
	\end{align*}
\end{proposition}
\begin{remark}
	It is crucial to use the norm \eqref{norm} instead of the equivalent norm defined in \eqref{enorme} 
	to investigate the composition of stochastic controlled rough paths. 
	Indeed, the definition of the equivalent norm depends on the decomposition in Proposition \ref{deco}, 
	but it is difficult to obtain the explicit expression of the decomposition of $\left(GY, GY^{\prime}+G^{\prime}Y\right)$ 
	from the decompositions of $\left(Y, Y^{\prime}\right) \in \mathbf{D}_{X}^{\left(q, q^{\prime}\right)\text{-}var}L_{m_{1}}$ 
	and $\left(G, G^{\prime}\right) \in \mathbf{D}_{X}^{\left(q, q^{\prime}\right)\text{-}var}L_{m_{2}}$, 
	unless $\left(G, G^{\prime}\right) \in \mathscr{D}_{X}^{\left(q, q^{\prime}\right)\text{-}var}L_{m_{2}}$.\\
	\indent
	In the original version of the paper, we  used the equivalent norm induced by the decomposition, 
	and assumed that the coefficient $\left(G, G^{\prime}\right)$ of BSDEs with a linear rough drift \eqref{inrbsde3} 
	belongs to $\mathscr{D}_{X}^{\left(p, p\right)\text{-}var}L_{\infty}$. 
	We thank the anonymous referee for driving  us  to extend 
	the coefficient $\left(G, G^{\prime}\right)$ from $\mathscr{D}_{X}^{\left(p, p\right)\text{-}var}L_{\infty}$ 
	to $\mathbf{D}_{X}^{\left(p, p\right)\text{-}var}L_{\infty}$. 
\end{remark}

\section{BSDEs with a linear rough drift} \label{S5}
Consider the following BSDE with a linear rough drift  
\begin{equation} \label{rbsde}
	Y_{t}=\xi+\int_{t}^{T} f_{r}\left(Y_{r}, Z_{r}\right) d r+\int_{t}^{T} \left(G_{r}Y_{r}+H_{r}\right) d \mathbf{X}_{r}-\int_{t}^{T} Z_{r} d W_{r}, \quad t \in \left[0, T\right].
\end{equation}
Here, $\xi$ is an $\mathbb{R}^{k}$-valued random variable, 
$\mathbf{X}=\left(X, \mathbb{X}\right) \in \mathscr{C}^{p\text{-}var}\left([0, T], \mathbb{R}^{e}\right)$ 
and $\left(Y, Z\right)$ is the pair of unknown processes. 
$f: \Omega \times \left[0, T\right] \times \mathbb{R}^{k} \times \mathbb{R}^{k \times d} \rightarrow \mathbb{R}^{k}$ 
is progressively measurable. 
$G: \Omega \times \left[0, T\right] \rightarrow \mathcal{L}\left(\mathbb{R}^{k}, \mathbb{R}^{k \times e}\right)$, 
$G^{\prime}: \Omega \times \left[0, T\right] \rightarrow \mathcal{L}\left(\mathbb{R}^{k}, \mathcal{L}\left(\mathbb{R}^{e}, \mathbb{R}^{k \times e}\right)\right)$ 
(which is implied in \eqref{rbsde}), 
$H: \Omega \times \left[0, T\right] \rightarrow \mathbb{R}^{k \times e}$ and 
$H^{\prime}: \Omega \times \left[0, T\right] \rightarrow \mathcal{L}\left(\mathbb{R}^{e}, \mathbb{R}^{k \times e}\right)$ 
(which is also implied in \eqref{rbsde}) are measurable adapted processes. 
The following definition of solutions is inspired 
by \cite{RSDE3}*{Definition 4.2 and Proposition 4.3}. 
\begin{definition} \label{solud} 
	We call $\left(Y, Z\right)$ a solution to \eqref{rbsde} if the following are satisfied 
	\begin{enumerate}[(i)]
		\item $Y \in \mathbb{L}^{2}\left(\left[0, T\right], \Omega; \mathbb{R}^{k}\right)$ and 
		$Z \in \mathbb{L}^{2}\left(\left[0, T\right], \Omega; \mathbb{R}^{k \times d}\right)$;
		\item $\int_{0}^{T}\left|f_{r}\left(Y_{r}, Z_{r}\right)\right| d r$ is finite a.s.;
		\item $\left(GY+H, G\left(GY+H\right)+G^{\prime}Y+H^{\prime}\right) \in \mathbf{D}_{X}^{\left(q, q^{\prime}\right)\text{-}var}L_{2}\left(\left[0, T\right], \Omega; \mathbb{R}^{k \times e}\right)$ 
		with $q, q^{\prime} \in \left[p, \infty\right)$ such that 
		$\frac{1}{p}+\frac{1}{q}+\frac{1}{q^{\prime}}>1$;
		\item writing $\int_{0}^{\cdot} \left(GY+H\right) d \mathbf{X}$ the $p$-rough stochastic integral of 
		$(GY+H, G(GY+H)+G^{\prime}Y+H^{\prime})$ against $\mathbf{X}$, 
		we have 
		\begin{equation*} 
			Y_{t}=\xi+\int_{t}^{T} f_{r}\left(Y_{r}, Z_{r}\right) d r+\int_{t}^{T} \left(GY+H\right) d \mathbf{X}-\int_{t}^{T} Z_{r} d W_{r}, \quad \forall t \in \left[0, T\right].
		\end{equation*}
	\end{enumerate}
\end{definition}
We introduce the following assumption. 
\begin{assumption} \label{assu}
	~
	\begin{enumerate}[(i)]
		\item $\xi \in L^{2}\left(\Omega, \mathcal{F}_{T}; \mathbb{R}^{k}\right)$ and 
		$f\left(0, 0\right) \in \mathbb{L}^{2}\left(\left[0, T\right], \Omega; \mathbb{R}^{k}\right)$;
		\item $f$ is uniformly Lipschitz continuous in $y$ and $z$, 
		i.e. for every $t \in \left[0, T\right]$, $y_{1}, y_{2} \in \mathbb{R}^{k}$ and $z_{1}, z_{2} \in \mathbb{R}^{k \times d}$, we have 
		\begin{equation*}
			\left|f_{t}\left(y_{1}, z_{1}\right)-f_{t}\left(y_{2}, z_{2}\right)\right| \lesssim \left|y_{1}-y_{2}\right|+\left|z_{1}-z_{2}\right|;
		\end{equation*}
		\item $\mathbf{X}=\left(X, \mathbb{X}\right) \in \mathscr{C}^{p\text{-}var}\left([0, T], \mathbb{R}^{e}\right)$;
		\item $\left(G, G^{\prime}\right) \in \mathbf{D}_{X}^{\left(p, p\right)\text{-}var}L_{\infty}\left(\left[0, T\right], \Omega; \mathcal{L}\left(\mathbb{R}^{k}, \mathbb{R}^{k \times e}\right)\right)$;
		\item $\left(H, H^{\prime}\right) \in \mathbf{D}_{X}^{\left(p, p\right)\text{-}var}L_{2}\left(\left[0, T\right], \Omega; \mathbb{R}^{k \times e}\right)$.
	\end{enumerate}
\end{assumption}
\subsection{Existence and Uniqueness} \label{S5.1}
We first give the following result on solutions to rough BSDE \eqref{rbsde}. 
\begin{proposition} \label{ape} 
	Let Assumption \ref{assu} hold and 
	$\left(Y, Z\right)$ be a solution to rough BSDE \eqref{rbsde}. 
	Then $\left(Y, GY+H\right) \in \mathbf{D}_{X}^{\left(p, p\right)\text{-}var}L_{2}$. 
\end{proposition}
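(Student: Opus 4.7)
The conclusion amounts to verifying the three clauses of Definition~\ref{scrp} for $(Y,GY+H)$ at the exponent pair $(p,p)$, working directly from the integral form of \eqref{rbsde}:
\begin{equation*}
\delta Y_{s,t}=-\int_s^t f_r(Y_r,Z_r)\,dr-\int_s^t(GY+H)\,d\mathbf{X}+\int_s^t Z_r\,dW_r.
\end{equation*}

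For the first clause $Y\in C^{p\text{-}var}L_2$, the plan is to treat the three pieces separately. Lemma~\ref{L2scrp} places the Lebesgue drift in $C^{1\text{-}var}L_2$; Lemma~\ref{mtg} places the square-integrable martingale $\int Z\,dW$ in $C^{2\text{-}var}L_2$; and Theorem~\ref{rsi}, applied with the hypothesis $(GY+H,(GY+H)')\in\mathbf{D}_X^{(q,q')\text{-}var}L_2$ from Definition~\ref{solud}(iii), places the rough stochastic integral in $C^{p\text{-}var}L_2$. Since $p\ge 2$, all three embed in $C^{p\text{-}var}L_2$. For the second clause $GY+H\in C^{p\text{-}var}L_2$, I note $H\in C^{p\text{-}var}L_2$ directly from Assumption~\ref{assu}(v), while $(G,G')\in\mathscr{D}_X^{(p,p)\text{-}var}L_\infty$ combined with $\delta G=G'\delta X+R^G$ yields $G\in C^{p\text{-}var}L_\infty$; the product identity $\delta(GY)_{s,t}=G_s\delta Y_{s,t}+\delta G_{s,t}Y_t$ then delivers $GY\in C^{p\text{-}var}L_2$ by subadditivity of $p$-variation norms.

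The main task is the third clause $\mathbb{E}_\cdot R^Y\in C_2^{p/2\text{-}var}L_2$, where $R^Y_{s,t}=\delta Y_{s,t}-(GY+H)_s\delta X_{s,t}$. Conditioning on $\mathcal{F}_s$ kills the Brownian integral, and inserting $\pm(GY+H)'_s\mathbb{X}_{s,t}$ (which is $\mathcal{F}_s$-measurable) decomposes $\mathbb{E}_sR^Y_{s,t}$ into (i)~$-\mathbb{E}_s\int_s^t f_r\,dr$, (ii)~$-(GY+H)'_s\mathbb{X}_{s,t}$, and (iii)~the conditional sewing remainder $-\mathbb{E}_s[\int_s^t(GY+H)\,d\mathbf{X}-(GY+H)_s\delta X_{s,t}-(GY+H)'_s\mathbb{X}_{s,t}]$. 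Piece~(i) has $1$-variation by a conditional variant of Lemma~\ref{L2scrp}; piece~(ii) has $L^2$-norm bounded by $\sup_r\|(GY+H)'_r\|_2\,|\mathbb{X}_{s,t}|$, hence $p/2$-variation directly. For piece~(iii) I would invoke the decomposition $GY+H=(GY+H)^M+(GY+H)^J$ used inside the proof of Theorem~\ref{rsi}: after $\mathbb{E}_s$ the $M$-contribution vanishes by the Stochastic Sewing Lemma~\ref{ssl}, while the $J$-contribution is controlled by the deterministic sewing estimate of the form $C\|R^{(GY+H)^J}\|_{2,\frac{qq'}{q+q'}\text{-}var;[s,t]}|\delta X|_{p\text{-}var;[s,t]}+C\|\delta(GY+H)'\|_{2,q'\text{-}var;[s,t]}|\mathbb{X}|_{\frac{p}{2}\text{-}var;[s,t]}$.

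The principal obstacle is converting these mixed-variation products into $p/2$-variation in $L_2$. Each such product is bounded by $w(s,t)^\theta$ for a suitable control $w$ and exponent $\theta\in\{1/p+1/q+1/q',\,2/p+1/q'\}$. The hypothesis $1/p+1/q+1/q'>1$ combined with $p\ge 2$ forces $(p/2)\theta>1$ in both cases, and superadditivity of controls then yields finite $p/2$-variation. Combined with $L^2$-continuity of every ingredient (which is already embedded in the analysis of the three BSDE pieces above), this assembles $\mathbb{E}_\cdot R^Y\in C_2^{p/2\text{-}var}L_2$, completing the verification.
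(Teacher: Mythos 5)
Your proof is correct, but it takes a genuinely more hands-on route than the paper. The paper argues by a two-pass bootstrap entirely through quoted results: using Definition \ref{solud}(iii), Lemmas \ref{L2scrp} and \ref{mtg} and one application of Theorem \ref{rsi}, it first concludes $\left(Y, GY+H\right) \in \mathbf{D}_{X}^{\left(p, q\right)\text{-}var}L_{2}$; it then invokes Lemma \ref{scrplc} to get $\left(GY+H, G\left(GY+H\right)+G^{\prime}Y+H^{\prime}\right) \in \mathbf{D}_{X}^{\left(p, q\right)\text{-}var}L_{2}$ and applies Theorem \ref{rsi} a second time (legitimate since $\frac{2}{p}+\frac{1}{q}>1$ follows from $q^{\prime}\geq p$), whose output indices $\left(p,p\right)$ yield the claim. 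You instead verify the three clauses of Definition \ref{scrp} directly: clause (i) as in the paper's first pass; clause (ii) by the elementary product estimate, which is in substance the relevant piece of Lemma \ref{scrplc}; and clause (iii) by conditioning, splitting $GY+H$ into its martingale and $\mathscr{D}$ parts, using that the $M$-contribution of the sewing remainder vanishes under $\mathbb{E}_{s}$ by Lemma \ref{ssl}, and bounding the $J$-contribution by the intermediate estimate inside the proof of Theorem \ref{rsi}; your exponent count is right, since both products of variation seminorms have total exponent exceeding $1$ (for the second one this again uses $q\geq p$ in Definition \ref{solud}(iii)), hence are powers of controls and a fortiori of finite $\frac{p}{2}$-variation. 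What your route buys is transparency: it isolates the one place where conditioning is indispensable, namely that the term $\left\|\delta \left(GY+H\right)^{M}\right\|_{2,q\text{-}var;\left[s,t\right]}\left|\delta X\right|_{p\text{-}var;\left[s,t\right]}$ in the unconditioned bound \eqref{rsie1} would not give finite $\frac{p}{2}$-variation, whereas it disappears after $\mathbb{E}_{s}$; the cost is that you reopen the proof of Theorem \ref{rsi} rather than citing its statement, and carry the control bookkeeping by hand, which the paper's second application of Theorem \ref{rsi} packages for free. One bookkeeping remark: with the convention $R^{Y}_{s,t}=\delta Y_{s,t}-Y^{\prime}_{s}\delta X_{s,t}$ that you quote, the backward equation makes $-\left(GY+H\right)$ the natural Gubinelli derivative of $Y$, and your displayed decomposition of $\mathbb{E}_{s}R^{Y}_{s,t}$ is the one corresponding to $\delta Y_{s,t}+\left(GY+H\right)_{s}\delta X_{s,t}$; this sign convention is exactly the one the paper itself suppresses in stating the proposition, so it does not affect the substance of either argument.
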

\begin{proof}
	Since 
	\begin{align}
		\left\|f\left(Y, Z\right)\right\|_{\mathbb{L}^{2}} &\lesssim \left(\mathbb{E}\int_{0}^{T} \left(\left|f_{r}\left(0, 0\right)\right|^{2}+\left|Y_{r}\right|^{2}+\left|Z_{r}\right|^{2}\right) d r\right)^{\frac{1}{2}} \notag\\
		&\lesssim \left\|f\left(0, 0\right)\right\|_{\mathbb{L}^{2}}+\left\|Y\right\|_{\mathbb{L}^{2}}+\left\|Z\right\|_{\mathbb{L}^{2}}, \label{finL2}
	\end{align}
	by Lemma \ref{L2scrp}, we have $\left(\int_{0}^{\cdot} f_{r}\left(Y_{r}, Z_{r}\right) d r, 0\right) \in \mathscr{D}_{X}^{\left(p, p\right)\text{-}var}L_{2}$. 
	Since $Z \in \mathbb{L}^{2}$, by Lemma \ref{mtg} we have 
	$\left(\int_{0}^{\cdot} Z_{r} d W_{r}, 0\right) \in \mathbf{D}_{X}^{\left(p, p\right)\text{-}var}L_{2}$. 
	By Theorem \ref{rsi}, $\left(\int_{0}^{\cdot} \left(GY+H\right) d \mathbf{X}, GY+H\right) \in \mathscr{D}_{X}^{\left(p, q\right)\text{-}var}L_{2}$. 
	Hence, $\left(Y, GY+H\right) \in \mathbf{D}_{X}^{\left(p, q\right)\text{-}var}L_{2}$. 
	By Proposition \ref{scrplc}, we have $\left(GY, G\left(GY+H\right)+G^{\prime}Y\right) \in \mathbf{D}_{X}^{\left(p, q\right)\text{-}var}L_{2}$. 
	Then by Theorem \ref{rsi}, we have 
	$\left(\int_{0}^{\cdot} \left(GY+H\right) d \mathbf{X}, GY+H\right) \in \mathscr{D}_{X}^{\left(p, p\right)\text{-}var}L_{2}$. 
	Therefore, $\left(Y, GY+H\right) \in \mathbf{D}_{X}^{\left(p, p\right)\text{-}var}L_{2}$. 
\end{proof}
We now give our main theorem. 
\begin{theorem} \label{solueu}
	Under Assumption \ref{assu}, the rough BSDE \eqref{rbsde} has a unique solution 
	$\left(Y, Z\right)$, and we have 
	\begin{align}
		&\left\|Y, GY+H\right\|_{\mathbf{D}_{X}^{\left(p, p\right)\text{-}var}L_{2}}^{\left(1\right)}+\left\|Z\right\|_{\mathbb{L}^{2}} \notag\\
		&\quad \lesssim_{T, \left|\mathbf{X}\right|_{p\text{-}var}, \left\|G, G^{\prime}\right\|_{\mathbf{D}_{X}^{\left(p, p\right)\text{-}var}L_{\infty}}^{\left(1\right)}} \left\|\xi\right\|_{2}+\left\|f\left(0, 0\right)\right\|_{\mathbb{L}^{2}}+\left\|H, H^{\prime}\right\|_{\mathbf{D}_{X}^{\left(p, p\right)\text{-}var}L_{2}}^{\left(1\right)}. \label{solue}
	\end{align}
\end{theorem}
\begin{proof}
	Let $K \geq 1$ and $\varepsilon \in \left(0, \frac{1}{K}\right]$ be constants 
	which are waiting to be determined later. 
	We first show the existence and uniqueness for 
	$T \vee \left|\mathbf{X}\right|_{p\text{-}var} \leq \varepsilon$. 
	Define 
	\begin{equation*}
		\mathcal{S}:=\mathbf{D}_{X}^{\left(p, p\right)\text{-}var}L_{2} \times \mathbb{L}^{2}, \quad \left\|Y, Y^{\prime}, Z\right\|^{\left(K\right)}:=\left\|Y, Y^{\prime}\right\|_{\mathbf{D}_{X}^{\left(p, p\right)\text{-}var}L_{2}}^{\left(K\right)}+K\left\|Z\right\|_{\mathbb{L}^{2}}.
	\end{equation*}
	Then the normed vector space $\left(\mathcal{S}, \left\|\cdot\right\|^{\left(K\right)}\right)$ 
	is a Banach space.\\
	\indent
	For $\left(Y, Y^{\prime}, Z\right) \in \mathcal{S}$, by Proposition \ref{scrplc} we have 
	$\left(GY, GY^{\prime}+G^{\prime}Y\right) \in \mathbf{D}_{X}^{\left(p, p\right)\text{-}var}L_{2}$. 
	By Theorem \ref{rsi}, we can define the $p$-rough stochastic integral $\int_{0}^{\cdot} \left(GY+H\right) d \mathbf{X}$ 
	of $\left(GY+H, GY^{\prime}+G^{\prime}Y+H^{\prime}\right)$ against $\mathbf{X}$, 
	and we have $\left(\int_{0}^{\cdot} \left(GY+H\right) d \mathbf{X}, GY+H\right) \in \mathscr{D}_{X}^{\left(p, p\right)\text{-}var}L_{2}$. 
	Since $Y \in C^{p\text{-}var}L_{2} \subset \mathbb{L}^{2}$, 
	in view of \eqref{finL2} we have $f\left(Y, Z\right) \in \mathbb{L}^{2}$. 
	Then we have $\xi+\int_{0}^{T} f_{r}\left(Y_{r}, Z_{r}\right) d r+\int_{0}^{T} \left(GY+H\right) d \mathbf{X} \in L^{2}\left(\Omega, \mathcal{F}_{T}; \mathbb{R}^{k}\right)$, 
	and thus $M: \Omega \times \left[0, T\right] \rightarrow \mathbb{R}^{k}$ 
	defined by 
	\begin{equation} \label{Mdenf}
		M_{t}:=\mathbb{E}_{t}\left[\xi+\int_{0}^{T} f_{r}\left(Y_{r}, Z_{r}\right) d r+\int_{0}^{T} \left(GY+H\right) d \mathbf{X}\right], \quad \forall t \in \left[0, T\right]
	\end{equation}
	is a square-integrable martingale. 
	Applying the martingale representation theorem, there exists unique 
	$\Phi^{Z} \in \mathbb{L}^{2}$ such that 
	\begin{equation*}
		M_{t}=M_{0}+\int_{0}^{t} \Phi_{r}^{Z} d W_{r}, \quad \forall t \in \left[0, T\right]. 
	\end{equation*}
	By Lemma \ref{mtg}, we have $\left(\int_{0}^{\cdot} \Phi_{r}^{Z} d W_{r}, 0\right) \in \mathbf{D}_{X}^{\left(p, p\right)\text{-}var}L_{2}$.
	Define 
	\begin{equation*}
		\Phi^{Y}:=\xi+\int_{\cdot}^{T} f_{r}\left(Y_{r}, Z_{r}\right) d r+\int_{\cdot}^{T} \left(GY+H\right) d \mathbf{X}-\int_{\cdot}^{T} \Phi_{r}^{Z} d W_{r}, \quad \Phi^{Y^{\prime}}:=GY+H.
	\end{equation*}
	Then we have 
	\begin{align*}
		\Phi_{t}^{Y}&=\xi+\int_{t}^{T} f_{r}\left(Y_{r}, Z_{r}\right) d r+\int_{t}^{T} \left(GY+H\right) d \mathbf{X}-\left(M_{T}-M_{0}-\int_{0}^{t} \Phi_{r}^{Z} d W_{r}\right)\\
		&=M_{0}-\int_{0}^{t} f_{r}\left(Y_{r}, Z_{r}\right) d r-\int_{0}^{t} \left(GY+H\right) d \mathbf{X}+\int_{0}^{t} \Phi_{r}^{Z} d W_{r}, \quad \forall t \in \left[0, T\right],
	\end{align*}
	and thus $\left(\Phi^{Y}, \Phi^{Y^{\prime}}\right) \in \mathbf{D}_{X}^{\left(p, p\right)\text{-}var}L_{2}$, 
	Hence, we can establish the map 
	\begin{equation} \label{Phidenf}
		\Phi: \mathcal{S} \rightarrow \mathcal{S}, \quad \left(Y, Y^{\prime}, Z\right) \mapsto \left(\Phi^{Y}, \Phi^{Y^{\prime}}, \Phi^{Z}\right). 
	\end{equation}
	\indent
	We now show that $\Phi$ is a contraction map. For any other 
	$\left(\bar{Y}, \bar{Y}^{\prime}, \bar{Z}\right) \in \mathcal{S}$, 
	we define $\bar{M}$ like defining $M$ by \eqref{Mdenf}. 
	Put $\Delta Y:=Y-\bar{Y}$ and similarly for other functions. 
	Since  
	\begin{equation} \label{cone1}
		\left\|\Delta f\left(Y, Z\right)\right\|_{\mathbb{L}^{2}} \lesssim \left(\mathbb{E}\int_{0}^{T} \left|\Delta Y_{r}\right|^{2}+\left|\Delta Z_{r}\right|^{2}d r\right)^{\frac{1}{2}} \lesssim \sup_{r \in \left[0, T\right]}\left\|\Delta Y_{r}\right\|_{2}+\left\|\Delta Z\right\|_{\mathbb{L}^{2}},
	\end{equation}
	by Lemma \ref{L2scrp}, we have 
	\begin{equation} \label{cone2}
		\left\|\int_{0}^{\cdot} \Delta f_{r}\left(Y_{r}, Z_{r}\right) d r, 0\right\|_{\mathbf{D}_{X}^{\left(p, p\right)\text{-}var}L_{2}}^{\left(K\right)} \lesssim KT^{\frac{1}{2}}\left(\sup_{r \in \left[0, T\right]}\left\|\Delta Y_{r}\right\|_{2}+\left\|\Delta Z\right\|_{\mathbb{L}^{2}}\right). 
	\end{equation}
	By Proposition \ref{scrplc}, we have 
	\begin{equation*}
		\left\|G\Delta Y, G\Delta Y^{\prime}+G^{\prime}\Delta Y\right\|_{\mathbf{D}_{X}^{\left(p, p\right)\text{-}var}L_{2}}^{\left(K\right)} \lesssim \left\|G, G^{\prime}\right\|_{\mathbf{D}_{X}^{\left(p, p\right)\text{-}var}L_{\infty}}^{\left(1\right)}\left\|\Delta Y, \Delta Y^{\prime}\right\|_{\mathbf{D}_{X}^{\left(p, p\right)\text{-}var}L_{2}}^{\left(K\right)}.
	\end{equation*}
	Then by Theorem \ref{rsi}, we have 
	\begin{align}
		&\left\|\int_{0}^{\cdot} G\Delta Y d \mathbf{X}, G\Delta Y\right\|_{\mathbf{D}_{X}^{\left(p, p\right)\text{-}var}L_{2}}^{\left(K\right)} \notag\\
		&\quad \lesssim \left\|G\Delta Y, G\Delta Y^{\prime}+G^{\prime}\Delta Y\right\|_{\mathbf{D}_{X}^{\left(p, p\right)\text{-}var}L_{2}}^{\left(K\right)}\left(\frac{1}{K}+K\left|\mathbf{X}\right|_{p\text{-}var}\right)\left(1+K\left|\mathbf{X}\right|_{p\text{-}var}\right) \label{cone3}\\
		&\quad \lesssim \left\|G, G^{\prime}\right\|_{\mathbf{D}_{X}^{\left(p, p\right)\text{-}var}L_{\infty}}^{\left(1\right)}\left\|\Delta Y, \Delta Y^{\prime}\right\|_{\mathbf{D}_{X}^{\left(p, p\right)\text{-}var}L_{2}}^{\left(K\right)}\left(\frac{1}{K}+K\left|\mathbf{X}\right|_{p\text{-}var}\right), \notag
	\end{align}
	and thus 
	\begin{align}
		&\left\|\int_{0}^{T} G\Delta Y d \mathbf{X}\right\|_{2} \notag\\
		&\quad \leq \frac{1}{K}\left\|\int_{0}^{\cdot} G\Delta Y d \mathbf{X}, G\Delta Y\right\|_{\mathbf{D}_{X}^{\left(p, p\right)\text{-}var}L_{2}}^{\left(K\right)} \label{cone4}\\
		&\quad \lesssim \frac{1}{K}\left\|G, G^{\prime}\right\|_{\mathbf{D}_{X}^{\left(p, p\right)\text{-}var}L_{\infty}}^{\left(1\right)}\left\|\Delta Y, \Delta Y^{\prime}\right\|_{\mathbf{D}_{X}^{\left(p, p\right)\text{-}var}L_{2}}^{\left(K\right)}\left(\frac{1}{K}+K\left|\mathbf{X}\right|_{p\text{-}var}\right). \notag
	\end{align}
	Since 
	\begin{equation*}
		\Delta M_{t}=\mathbb{E}_{t}\left[\int_{0}^{T} \Delta f_{r}\left(Y_{r}, Z_{r}\right) d r+\int_{0}^{T} \left(G\Delta Y\right) d \mathbf{X}\right]=\Delta M_{0}+\int_{0}^{t} \Delta \Phi_{r}^{Z} d W_{r},
	\end{equation*}
	for every $t \in \left[0, T\right]$, we have 
	\begin{equation*}
		\left\|\Delta \Phi^{Z}\right\|_{\mathbb{L}^{2}} \leq 2\left\|\Delta M_{T}\right\|_{2} \lesssim T^{\frac{1}{2}}\left\|\Delta f(Y, Z)\right\|_{\mathbb{L}^{2}}+\left\|\int_{0}^{T} G\Delta Y d \mathbf{X}\right\|_{2}. 
	\end{equation*}
	Combining \eqref{cone1}, \eqref{cone4} and the last inequality, we have 
	\begin{equation} \label{cone5}
		\left\|\Delta \Phi^{Z}\right\|_{\mathbb{L}^{2}} \lesssim \frac{1}{K}\left(\frac{1}{K}+K\varepsilon\right)\left(1+\left\|G, G^{\prime}\right\|_{\mathbf{D}_{X}^{\left(p, p\right)\text{-}var}L_{\infty}}^{\left(1\right)}\right)\left\|\Delta Y, \Delta Y^{\prime}, \Delta Z\right\|^{\left(K\right)}.
	\end{equation}
	By Lemma \ref{mtg}, we have 
	\begin{align}
		&\left\|\int_{0}^{\cdot} \Delta \Phi_{r}^{Z} d W_{r}, 0\right\|_{\mathbf{D}_{X}^{\left(p, p\right)\text{-}var}L_{2}}^{\left(K\right)} \notag\\
		&\quad \lesssim K\left\|\Delta \Phi^{Z}\right\|_{\mathbb{L}^{2}} \lesssim \left(\frac{1}{K}+K\varepsilon\right)\left(1+\left\|G, G^{\prime}\right\|_{\mathbf{D}_{X}^{\left(p, p\right)\text{-}var}L_{\infty}}^{\left(1\right)}\right)\left\|\Delta Y, \Delta Y^{\prime}, \Delta Z\right\|^{\left(K\right)}. \label{cone6}
	\end{align}
	Note that 
	\begin{equation*}
		\Delta \Phi^{Y}=\int_{\cdot}^{T} \Delta f_{r}\left(Y_{r}, Z_{r}\right) d r+\int_{\cdot}^{T} G\Delta Y d \mathbf{X}-\int_{\cdot}^{T} \Delta \Phi_{r}^{Z} d W_{r} , \quad \Delta \Phi^{Y^{\prime}}=G\Delta Y.
	\end{equation*}
	Combining \eqref{cone2}, \eqref{cone3} and \eqref{cone6}, we have 
	\begin{align*}
		&\left\|\Delta \Phi^{Y}, \Delta \Phi^{Y^{\prime}}\right\|_{\mathbf{D}_{X}^{\left(p, p\right)\text{-}var}L_{2}}^{\left(K\right)}\\
		&\quad \lesssim \left(\frac{1}{K}+K\varepsilon\right)\left(1+\left\|G, G^{\prime}\right\|_{\mathbf{D}_{X}^{\left(p, p\right)\text{-}var}L_{\infty}}^{\left(1\right)}\right)\left\|\Delta Y, \Delta Y^{\prime}, \Delta Z\right\|^{\left(K\right)}.
	\end{align*}
	Then we have 
	\begin{align*}
		&\left\|\Delta \Phi^{Y}, \Delta \Phi^{Y^{\prime}}, \Delta \Phi^{Z}\right\|^{\left(K\right)}\\
		&\quad \leq C\left(\frac{1}{K}+K\varepsilon\right)\left(1+\left\|G, G^{\prime}\right\|_{\mathbf{D}_{X}^{\left(p, p\right)\text{-}var}L_{\infty}}^{\left(1\right)}\right)\left\|\Delta Y, \Delta Y^{\prime}, \Delta Z\right\|^{\left(K\right)}.
	\end{align*}
	Taking 
	\begin{equation*}
		K:=4\left(1+C\right)\left(1+\left\|G, G^{\prime}\right\|_{\mathbf{D}_{X}^{\left(p, p\right)\text{-}var}L_{\infty}}^{\left(1\right)}\right), \quad \varepsilon:=\frac{1}{K^{2}}, 
	\end{equation*}
	for the above $C$, we have 
	\begin{equation*}
		\left\|\Delta \Phi^{Y}, \Delta \Phi^{Y^{\prime}}, \Delta \Phi^{Z}\right\|^{\left(K\right)} \leq \frac{1}{2}\left\|\Delta Y, \Delta Y^{\prime}, \Delta Z\right\|^{\left(K\right)},
	\end{equation*}
	and thus $\Phi$ is a contraction map.\\
	\indent
	Applying the Banach fixed-point theorem, 
	$\Phi$ has a unique fixed point $\left(Y, Y^{\prime}, Z\right)$ in $\mathcal{S}$, 
	and thus $\left(Y, Z\right)$ is a solution to \eqref{rbsde}. 
	To show the uniqueness, assume $\left(\bar{Y}, \bar{Z}\right)$ is another solution. 
	By Proposition \ref{ape}, 
	$\left(\bar{Y}, G\bar{Y}+H\right) \in \mathbf{D}_{X}^{\left(p, p\right)\text{-}var}L_{2}$ 
	and thus $\left(\bar{Y}, G\bar{Y}+H, \bar{Z}\right)$ is also 
	a fixed point of $\Phi$ in $\mathcal{S}$. 
	Therefore, $\left(\bar{Y}, \bar{Z}\right)=\left(Y, Z\right)$.\\
	\indent
	We now show the estimate \eqref{solue} for $T \vee \left|\mathbf{X}\right|_{p\text{-}var} \leq \varepsilon$. 
	Taking $\left(\bar{Y}, \bar{Y}^{\prime}, \bar{Z}\right)=\left(0, 0, 0\right)$, we have 
	\begin{equation*}
		\bar{M}_{t}=\mathbb{E}_{t}\left[\xi+\int_{0}^{T} f_{r}\left(0, 0\right) d r+\int_{0}^{T} H d \mathbf{X}\right]=\bar{M}_{0}+\int_{0}^{t} \Phi_{r}^{\bar{Z}} d W_{r}, \quad \forall t \in \left[0, T\right],
	\end{equation*}
	\begin{equation*}
		\Phi^{\bar{Y}}=\xi+\int_{\cdot}^{T} f_{r}\left(0, 0\right) d r+\int_{\cdot}^{T} H d \mathbf{X}-\int_{\cdot}^{T} \Phi_{r}^{\bar{Z}} d W_{r},
	\end{equation*}
	and $\Phi^{\bar{Y}^{\prime}}=H$. 
	Analogous to the proof of contraction of the map $\Phi$, we have 
	\begin{equation*}
		\left\|\Phi^{\bar{Y}}, \Phi^{\bar{Y}^{\prime}}, \Phi^{\bar{Z}}\right\|^{\left(K\right)} \lesssim_{\left\|G, G^{\prime}\right\|_{\mathbf{D}_{X}^{\left(p, p\right)\text{-}var}L_{\infty}}^{\left(1\right)}} \left\|\xi\right\|_{2}+\left\|f\left(0, 0\right)\right\|_{\mathbb{L}^{2}}+\left\|H, H^{\prime}\right\|_{\mathbf{D}_{X}^{\left(p, p\right)\text{-}var}L_{2}}^{\left(1\right)}.
	\end{equation*}
	Then we have 
	\begin{align*}
		\left\|Y, GY+H, Z\right\|^{\left(K\right)} &\leq \left\|Y-\Phi^{\bar{Y}}, GY+H-\Phi^{\bar{Y}^{\prime}}, Z-\Phi^{\bar{Z}}\right\|^{\left(K\right)}+\left\|\Phi^{\bar{Y}}, \Phi^{\bar{Y}^{\prime}}, \Phi^{\bar{Z}}\right\|^{\left(K\right)}\\
		&\leq \frac{1}{2}\left\|Y, GY+H, Z\right\|^{\left(K\right)}+\left\|\Phi^{\bar{Y}}, \Phi^{\bar{Y}^{\prime}}, \Phi^{\bar{Z}}\right\|^{\left(K\right)},
	\end{align*}
	and thus 
	\begin{align}
		\left\|Y, GY+H, Z\right\|^{\left(1\right)} &\leq \left\|Y, GY+H, Z\right\|^{\left(K\right)} \lesssim \left\|\Phi^{\bar{Y}}, \Phi^{\bar{Y}^{\prime}}, \Phi^{\bar{Z}}\right\|^{\left(K\right)} \notag\\
		&\lesssim_{\left\|G, G^{\prime}\right\|_{\mathbf{D}_{X}^{\left(p, p\right)\text{-}var}L_{\infty}}^{\left(1\right)}} \left\|\xi\right\|_{2}+\left\|f\left(0, 0\right)\right\|_{\mathbb{L}^{2}}+\left\|H, H^{\prime}\right\|_{\mathbf{D}_{X}^{\left(p, p\right)\text{-}var}L_{2}}^{\left(1\right)}. \label{solue1}
	\end{align}
	\indent
	At last, we show the existence and uniqueness and estimate \eqref{solue} for arbitrary $T$ and $\mathbf{X}$. 
	Define $w: \Delta \rightarrow \left[0, \infty\right)$ by 
	\begin{equation*}
		w\left(s, t\right):= 2^{p-1}\left(\left|\delta X\right|_{p\text{-}var; \left[s, t\right]}^{p}+\left|\mathbb{X}\right|_{\frac{p}{2}\text{-}var; \left[s, t\right]}^{p}\right), \quad \forall \left(s, t\right) \in \Delta.
	\end{equation*}
	By \cite{RPB2}*{Exercise 1.9 and Proposition 5.8}, $w$ is a control and we have 
	\begin{equation*}
		\left|\mathbf{X}\right|_{p\text{-}var; \left[s, t\right]}^{p} \leq w\left(s, t\right) \leq 2^{p-1}\left|\mathbf{X}\right|_{p\text{-}var; \left[s, t\right]}^{p}, \quad \forall \left(s, t\right) \in \Delta.
	\end{equation*}
	Define $s_{0}:=0$ and 
	\begin{equation*}
		s_{i}:=\sup\left\{t \in \left[s_{i-1}, T\right]: w\left(s_{i-1}, t\right) \leq \varepsilon^{p}\right\}, \quad \forall i=1, 2, \cdots.
	\end{equation*}
	Then there exists a positive integer $N$ such that 
	$\left|\mathbf{X}\right|_{p\text{-}var;\left[s_{i}, s_{i+1}\right]} \leq w\left(s_{i}, s_{i+1}\right)^{\frac{1}{p}}=\varepsilon$, 
	for $i=0, \cdots, N-2$, and 
	$\left|\mathbf{X}\right|_{p\text{-}var;\left[s_{N-1}, T\right]} \leq w\left(s_{N-1}, T\right)^{\frac{1}{p}} \leq \varepsilon$. 
	By the superadditivity of $w$, we have 
	\begin{equation*}
		\left(N-1\right)\varepsilon^{p} \leq \sum_{i=0}^{N-2}w\left(s_{i}, s_{i+1}\right) \leq w\left(0, T\right) \leq 2^{p-1}\left|\mathbf{X}\right|_{p\text{-}var}^{p}, 
	\end{equation*}
	and thus $N \leq C_{\left|\mathbf{X}\right|_{p\text{-}var}, \varepsilon}$. 
	Take $\bar{N}:=\lfloor\frac{T}{\varepsilon}\rfloor+1$. 
	Define partitions $\pi:=\left\{0<s_{1}<\cdots<s_{N-1}<T\right\}$, 
	$\bar{\pi}:=\left\{0<\varepsilon<\cdots<\left(\bar{N}-1\right)\varepsilon<T\right\}$ 
	and $\tilde{\pi}=\left\{0=t_{0}<t_{1}<\cdots<t_{\tilde{N}}=T\right\}:=\pi \cup \bar{\pi}$. 
	Then we have $\left|t_{i+1}-t_{i}\right| \vee \left|\mathbf{X}\right|_{p\text{-}var; \left[t_{i}, t_{i+1}\right]} \leq \varepsilon$ 
	for $i=0, \cdots, \tilde{N}-1$, and 
	\begin{equation*}
		\tilde{N} \leq N+\bar{N} \leq C_{T, \left|\mathbf{X}\right|_{p\text{-}var}, \varepsilon} \leq C_{T, \left|\mathbf{X}\right|_{p\text{-}var}, \left\|G, G^{\prime}\right\|_{\mathbf{D}_{X}^{\left(p, p\right)\text{-}var}L_{\infty}}^{\left(1\right)}}.
	\end{equation*}
	Define $Y_{t_{\tilde{N}}}^{\tilde{N}}:=\xi$, and define $\left(Y^{i}, Z^{i}\right)$
	recursively on $\left[t_{i},t_{i+1}\right]$ for $i=0, \cdots, \tilde{N}-1$ 
	by the solution to rough BSDE 
	\begin{align*}
		Y_{t}^{i}&=Y_{t_{i+1}}^{i+1}+\int_{t}^{t_{i+1}} f_{r}\left(Y_{r}^{i}, Z_{r}^{i}\right) d r+\int_{t}^{t_{i+1}} \left(G_{r}Y_{r}^{i}+H_{r}\right) d \mathbf{X}_{r}\\
		&\quad-\int_{t}^{t_{i+1}} Z_{r}^{i} d W_{r}, \quad t \in \left[t_{i}, t_{i+1}\right].
	\end{align*}
	Then define $Y_{t}:=Y_{t}^{\tilde{N}-1}$, $Z_{t}:=Z_{t}^{\tilde{N}-1}$ 
	for every $t \in \left[t_{\tilde{N}-1},t_{\tilde{N}}\right]$, 
	and define $Y_{t}:=Y_{t}^{i}$, $Z_{t}:=Z_{t}^{i}$ 
	for every $t \in \left[t_{i},t_{i+1}\right)$ and $i=0, \cdots, \tilde{N}-2$. 
	By Proposition \ref{ape}, 
	$\left(Y, Z\right)$ is the unique solution to \eqref{rbsde}. 
	Analogous to \eqref{solue1}, we have 
	\begin{align*}
		&\left\|Y^{i}, GY^{i}+H, Z^{i}\right\|^{\left(1\right)}\\
		&\quad \lesssim_{\left\|G, G^{\prime}\right\|_{\mathbf{D}_{X}^{\left(p, p\right)\text{-}var}L_{\infty}}^{\left(1\right)}} \left\|Y_{t_{i+1}}^{i+1}\right\|_{2}+\left\|f\left(0, 0\right)\right\|_{\mathbb{L}^{2}}+\left\|H, H^{\prime}\right\|_{\mathbf{D}_{X}^{\left(p, p\right)\text{-}var}L_{2}}^{\left(1\right)}. 
	\end{align*}
	By induction, we get the estimate \eqref{solue}. 
\end{proof}
\begin{remark}
	Theorem \ref{solueu} is new even in the one-dimensional case, 
	for our terminal value is not necessarily essentially bounded as in Diehl and Friz \cite{RBSDE}. 
	Moreover, our rough integrator $\mathbf{X}$ can be non-geometric. 
\end{remark}
\begin{remark}
	The composition of a stochastic controlled rough path $\left(Y, Y^{\prime}\right) \in \mathbf{D}_{X}^{\left(p, p\right)\text{-}var}L_{m}$ 
	with a nonlinear (even deterministic and time-invariant) vector field $g$, as a stochastic controlled rough path,
	might not stay like the linear rough drift within the same space $\mathbf{D}_{X}^{\left(p, p\right)\text{-}var}L_{m}$. 
	In fact,  both $\left(Y, Y^{\prime}\right) \in \mathbf{D}_{X}^{\left(p, p\right)\text{-}var}L_{m}$ 
	and $g \in Lip^{3}\left(\mathbb{R}^{k}, \mathbb{R}^{k \times e}\right)$ only ensure 
	$\left(g\left(Y\right), Dg\left(Y\right)Y^{\prime}\right) \in \mathbf{D}_{X}^{\left(p, p\right)\text{-}var}L_{\frac{m}{2}}$. As a consequence, our stochastic controlled rough path method fails to apply in a straightforward way 
	to BSDEs with a nonlinear rough drift 
	\begin{equation*}
		Y_{t}=\xi+\int_{t}^{T} f_{r}\left(Y_{r}, Z_{r}\right) d r+\int_{t}^{T} g_{r}\left(Y_{r}\right) d \mathbf{X}_{r}-\int_{t}^{T} Z_{r} d W_{r}, \quad t \in \left[0, T\right].
	\end{equation*}
	\indent
	In contrast, rough SDEs in Friz et al. \cite{RSDE3} 
	\begin{equation*} 
		S_{t}=S_{0}+\int_{0}^{t} b_{r}\left(S_{r}\right) d r+\int_{0}^{t} F_{r}\left(S_{r}\right) d \mathbf{X}_{r}+\int_{0}^{t} \sigma_{r}\left(S_{r}\right) d W_{r}, \quad t \in \left[0, T\right]
	\end{equation*}
	are discussed with the space of $\left(m, \infty\right)$-integrable stochastic controlled rough paths 
	$\mathbf{D}_{X}^{\alpha, \alpha}L_{m, \infty}$ (see \cite{RSDE3}*{Definition 3.2}).  
	Both $\left(Y, Y^{\prime}\right) \in \mathbf{D}_{X}^{\alpha, \alpha}L_{m, \infty}$ 
	and $F \in Lip^{3}\left(\mathbb{R}^{k}, \mathbb{R}^{k \times e}\right)$ 
	ensure $\left(F\left(Y\right), DF\left(Y\right)Y^{\prime}\right) \in \mathbf{D}_{X}^{\alpha, \alpha}L_{m, \infty}$ 
	(see \cite{RSDE3}*{Lemma 3.13}). 
	However, their method still fails to apply in a straightforward way to our rough BSDEs. 
	Indeed, $\left(Y, Y^{\prime}\right) \in \mathbf{D}_{X}^{\alpha, \alpha}L_{m, \infty}$ requires  
	\begin{equation} \label{mii}
		\left\|\mathbb{E}_{s}\left|\delta Y_{s, t}\right|^{m}\right\|_{\infty}^{\frac{1}{m}} \lesssim \left|t-s\right|^{\alpha}, \quad \forall \left(s, t\right) \in \Delta,
	\end{equation}
	which, in our $p$-variation scale, corresponds to the following condition 
	\begin{equation*}
		\sup_{\pi \in \mathcal{P}\left[0, T\right]}\left(\sum_{\left[u, v\right] \in \pi}\left\|\mathbb{E}_{u}\left|\delta Y_{u, v}\right|^{m}\right\|_{\infty}^{\frac{p}{m}}\right)^{\frac{1}{p}} < \infty,
	\end{equation*}
	and fails to be true  for an $L^{m}$-integrable (or even BMO) martingale $Y$. 
	Hence, it seems hopeless that $\left(M, 0\right)$ defined by \eqref{Mdenf} is 
	a stochastic controlled rough path of $\left(m, \infty\right)$-integrability. 
	Friz et al. \cite{RSDE3} assume that $\sigma$ is essentially bounded so that 
	$Y:=\int_{0}^{\cdot} \sigma_{r}\left(S_{r}\right) d W_{r}$ satisfies the condition \eqref{mii}. 
\end{remark}
\begin{remark}
	When $\left(G, G^{\prime}\right) \in \mathscr{D}_{X}^{\left(p, p\right)\text{-}var}L_{\infty}$, 
	we might expect to solve the rough BSDE \eqref{rbsde} in spirit of duality, since the rough drift is linear. 
	More precisely, consider the following RDE with values in 
	$L^{\infty}\left(\Omega; \mathbb{R}^{k \times k}\right)$ 
	\begin{equation} \label{rde}
		S_{t}=I_{k}+\int_{0}^{t} S_{r}G_{r} d \mathbf{X}_{r} +\int_{0}^{t} S_{r}G_{r}^{2} d \left[\mathbf{X}\right]_{r}, \quad t \in \left[0, T\right].
	\end{equation}
	Here, the unknown process $S: \Omega \times \left[0, T\right] \rightarrow \mathbb{R}^{k \times k}$ 
	is viewed as a path from $\left[0, T\right]$ to 
	$L^{\infty}\left(\Omega; \mathbb{R}^{k \times k}\right)$, 
	$I_{k}$ is the identity matrix in $\mathbb{R}^{k \times k}$, 
	$\left[\mathbf{X}\right]$ is the rough bracket of $\mathbf{X}$ defined by 
	\begin{equation*}
		\left[\mathbf{X}\right]: \left[0, T\right] \rightarrow \mathbb{R}^{e} \otimes \mathbb{R}^{e}, \quad \left[\mathbf{X}\right]_{t}:=\delta X_{0, t} \otimes \delta X_{0, t}-2\operatorname{Sym}\left(\mathbb{X}_{0, t}\right), \quad \forall t \in \left[0, T\right],
	\end{equation*}
	and $\int_{0}^{\cdot} SG^{2} d \left[\mathbf{X}\right]$ is a Young integral. 
	Analogous to the proof of \cite{RPJ}*{Theorems 3.2 and 3.8}, 
	we can show that the RDE \eqref{rde} has a unique solution $S$ and 
	$\left(S, SG\right) \in \mathscr{D}_{X}^{\left(p, p\right)\text{-}var}L_{\infty}$.\\
	\indent
	Heuristically, if $\left(Y, Z\right)$ is the solution to \eqref{rbsde}, 
	from the rough Itô formula \cite{RSDE3}*{Theorem 4.13}, we might have 
	\begin{align*}
		S_{t}Y_{t}&=S_{T}\xi+\int_{t}^{T} S_{r}f_{r}\left(Y_{r}, Z_{r}\right) d r+\int_{t}^{T} S_{r}H_{r} d \mathbf{X}_{r}\\
		&\quad+\int_{t}^{T} S_{r}G_{r}H_{r} d \left[\mathbf{X}\right]_{r}-\int_{t}^{T} S_{r}Z_{r} d W_{r}, \quad t \in \left[0, T\right]. 
	\end{align*}
	Then $\left(\tilde{Y}, \tilde{Z}\right) \in C^{p\text{-}var}L_{2} \times \mathbb{L}^{2}$ 
	defined by 
	\begin{equation*}
		\tilde{Y}_{t}:=S_{t}Y_{t}-\int_{t}^{T} SH d \mathbf{X}-\int_{t}^{T} SGH d \left[\mathbf{X}\right], \quad \tilde{Z}_{t}:=S_{t}Z_{t}, \quad \forall t \in \left[0, T\right]
	\end{equation*}
	might solve the BSDE 
	\begin{equation} \label{bsde}
		\tilde{Y}_{t}=S_{T}\xi+\int_{t}^{T} \tilde{f}_{r}\left(\tilde{Y}_{r}, \tilde{Z}_{r}\right) d r-\int_{t}^{T} \tilde{Z}_{r} d W_{r}, \quad t \in \left[0, T\right],
	\end{equation}
	where 
	\begin{equation*}
		\tilde{f}_{t}\left(\tilde{y}, \tilde{z}\right):=S_{t}f_{t}\left(S_{t}^{-1}\left(\tilde{y}+\int_{t}^{T} SH d \mathbf{X}+\int_{t}^{T} SGH d \left[\mathbf{X}\right]\right), S_{t}^{-1}\tilde{z}\right).
	\end{equation*}
	Conversely, if the BSDE \eqref{bsde} has a solution 
	$\left(\tilde{Y}, \tilde{Z}\right)$, then $\left(Y, Z\right)$ defined by 
	\begin{equation*}
		Y_{t}:=S_{t}^{-1}\left(\tilde{Y}_{t}+\int_{t}^{T} SH d \mathbf{X}+\int_{t}^{T} SGH d \left[\mathbf{X}\right]\right), \quad Z_{t}:=S_{t}^{-1}\tilde{Z}_{t}, \quad \forall t \in \left[0, T\right].
	\end{equation*}
	might solve the rough BSDE \eqref{rbsde}. 
	Since the conditions of the rough Itô formula \cite{RSDE3}*{Theorem 4.13} are not ensured within our context, 
	the above derivation stays only formal and is still far away from a rigorous proof. 
\end{remark}
\subsection{Continuity of the solution map} \label{S5.2}
We finish this section with continuity of the solution map. 
\begin{theorem} \label{solumc}
	Let $\left(\xi, f, G, G^{\prime}, H, H^{\prime}, \mathbf{X}\right)$ and $\left(\xi^{n}, f^{n}, G^{n}, G^{n, \prime}, H^{n}, H^{n, \prime}, \mathbf{X}^{n}\right), n=1, 2, \cdots$ satisfy Assumption \ref{assu}, 
	$\left(Y, Z\right)$ and $\left(Y^{n}, Z^{n}\right)$ be the solution to 
	the corresponding rough BSDE \eqref{rbsde}. 
	Assume 
	\begin{align}
		&\left\|\xi^{n}-\xi\right\|_{2}+\left\|f^{n}\left(0, 0\right)-f\left(0, 0\right)\right\|_{\mathbb{L}^{2}}+\left\|G^{n}, G^{n, \prime}; G, G^{\prime}\right\|_{\mathbf{D}_{X^{n}, X}^{\left(p, p\right)\text{-}var}L_{\infty}}^{\left(1\right)} \notag\\
		&\quad+\left\|H^{n}, H^{n, \prime}; H, H^{\prime}\right\|_{\mathbf{D}_{X^{n}, X}^{\left(p, p\right)\text{-}var}L_{2}}^{\left(1\right)}+\rho_{p\text{-}var}\left(\mathbf{X}^{n}, \mathbf{X}\right) \rightarrow 0, \quad as\ n \rightarrow \infty \label{convcond1}
	\end{align}
	and 
	\begin{equation} \label{convcond2}
		f^{n}\left(y, z\right) \rightarrow f\left(y, z\right)\ in\ measure\ d\mathbb{P} \times dt, \quad as\ n \rightarrow \infty, \quad \forall \left(y, z\right) \in \mathbb{R}^{k} \times \mathbb{R}^{k \times d}. 
	\end{equation}
	Then we have 
	\begin{equation} \label{conv}
		\lim_{n \rightarrow \infty}\left(\left\|Y^{n}, G^{n}Y^{n}+H^{n}; Y, GY+H\right\|_{\mathbf{D}_{X^{n}, X}^{\left(p, p\right)\text{-}var}L_{2}}^{\left(1\right)}+\left\|Z^{n}-Z\right\|_{\mathbb{L}^{2}}\right)=0. 
	\end{equation}
\end{theorem}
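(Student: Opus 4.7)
The plan is to adapt the contraction mapping argument from the proof of Theorem \ref{solueu}. By \eqref{convcond1} and Proposition \ref{solubd}, the data and solutions are uniformly bounded in $k$; combined with $\rho_{p\text{-}var}(\mathbf{X}^k, \mathbf{X}) \to 0$, I can choose a partition $0 = t_0 < \cdots < t_N = T$ on which $t_{i+1} - t_i$, $|\mathbf{X}^k|_{p\text{-}var;[t_i, t_{i+1}]}$, and $|\mathbf{X}|_{p\text{-}var;[t_i, t_{i+1}]}$ are all at most the threshold $\varepsilon$ appearing in the proof of Theorem \ref{solueu}, for every $k$ sufficiently large. A backward induction along this partition reduces the proof to the small-interval case $T, |\mathbf{X}^k|_{p\text{-}var}, |\mathbf{X}|_{p\text{-}var} \le \varepsilon$.

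On a small interval, I fix $K \ge 1$ as in \eqref{Kedenf} uniformly in $k$, using the uniform bound on $\|(G^k, (G^k)^{\prime})\|_{\mathscr{D}_{X^k}^{(p,p)\text{-}var} L_\infty}$. Writing $S^k := (Y^k, G^kY^k+H^k, Z^k)$ and $S := (Y, GY+H, Z)$, both are the respective fixed points of $\Phi^k : \mathbf{S}^k \to \mathbf{S}^k$ and $\Phi : \mathbf{S} \to \mathbf{S}$ from \eqref{Phidenf}. The quantity to control is
\begin{equation*}
  d^{(K)}(S^k, S) := \left\|Y^k, G^kY^k+H^k; Y, GY+H\right\|_{\mathbf{D}_{X^k, X}^{(p,p)\text{-}var} L_2}^{(K)} + K\left\|Z^k - Z\right\|_2,
\end{equation*}
and the central task is to prove a \emph{two-sided contraction}
\begin{equation*}
  d^{(K)}\bigl(\Phi^k(S_1), \Phi(S_2)\bigr) \le \tfrac{1}{2}\, d^{(K)}(S_1, S_2) + \eta_k,
\end{equation*}
valid for all $S_1 \in \mathbf{S}^k$ and $S_2 \in \mathbf{S}$ with $\|S_1\|_k^{(K)}, \|S_2\|^{(K)} \le C_M$, where the data-perturbation term $\eta_k$ depends only on the quantities in \eqref{convcond1} and \eqref{convcond2} and satisfies $\eta_k \to 0$. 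Applied at the fixed points, this immediately gives $d^{(K)}(S^k, S) \le 2\eta_k \to 0$, which is \eqref{conv}.

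The verification of the two-sided contraction decomposes $\Phi^k(S_1) - \Phi(S_2)$ along the three constituents of $\Phi$: the terminal value, the Lebesgue drift $\int_0^\cdot f_r(\cdot)\,dr$, the rough stochastic integral $\int_0^\cdot (GY+H)\,d\mathbf{X}$, and a martingale-representation term for the $Z$-component. The terminal piece contributes $\|\xi^k - \xi\|_2 \to 0$. For the Lebesgue drift, Lemma \ref{L2scrp} reduces the estimate to controlling $\|f^k_r(Y_1, Z_1) - f_r(Y_2, Z_2)\|_2$; splitting $f^k(Y_1, Z_1) - f(Y_2, Z_2) = [f^k(Y_1, Z_1) - f^k(Y_2, Z_2)] + [f^k(Y_2, Z_2) - f(Y_2, Z_2)]$, the first bracket is controlled by a multiple of $d^{(K)}(S_1, S_2)/K$ via uniform Lipschitz continuity, while the $L^2$-vanishing of the second bracket follows from $\|f^k(0,0) - f(0,0)\|_2 \to 0$, the uniform Lipschitz continuity, the convergence in measure \eqref{convcond2}, and dominated convergence. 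For the rough stochastic integral, I first apply Lemma \ref{lcstab} to compare $(G^kY_1 + H^k, \,(G^kY_1+H^k)^{\prime})$ with $(GY_2 + H,\,(GY_2+H)^{\prime})$ in the norm $\|\cdot;\cdot\|_{\mathbf{D}_{X^k, X}^{(p,p)\text{-}var} L_2}^{(K)}$, using the data convergences \eqref{convcond1}; Theorem \ref{rsistab} then converts this into the desired bound on the difference of the $p$-rough stochastic integrals. The $Z$-component is recovered via the martingale representation theorem from these pieces, with the $p$-variation norm of $\int_0^\cdot (Z^k - Z)\,dW_r$ controlled by Lemma \ref{mtg}. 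The main obstacle is bookkeeping the numerous cross-terms, particularly those involving comparisons across different reference rough paths $X^k$ and $X$; this is exactly the role of the comparison norms $\|\cdot;\cdot\|^{(K)}$ and the stability results developed in Sections 4 and 5, and the choice of $K$ in \eqref{Kedenf} ensures that the $\varepsilon$-small contributions from the rough integration absorb into the contraction factor $\tfrac{1}{2}$.
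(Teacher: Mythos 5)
Your proposal is correct and follows essentially the same route as the paper: after the reduction to $T,\left|\mathbf{X}^{k}\right|_{p\text{-}var}\leq\varepsilon$ and the uniform bounds from Proposition \ref{solubd}, the paper likewise derives the quasi-contraction inequality for the difference of the two fixed points from Lemma \ref{L2scrp}, Lemma \ref{lcstab}, Theorem \ref{rsistab} and the martingale representation, absorbs the self-referencing term through the choice of $K$ and $\varepsilon$ as in \eqref{Kedenf}, and sends the remaining $f$-perturbation to zero via \eqref{convcond2} and dominated convergence. The only caveat is that your intermediate claim that $\eta_{k}$ can be taken uniformly over all bounded $S_{2}$ is stronger than what the argument justifies (the vanishing of $\left\|\left(f^{k}-f\right)\left(Y_{2},Z_{2}\right)-\left(f^{k}-f\right)\left(0,0\right)\right\|_{2}$ rests on dominated convergence for a fixed pair of processes), but since you only invoke the estimate at $S_{2}=\left(Y,GY+H,Z\right)$ this is harmless.
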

\begin{proof}
	Write $\xi^{0}:=\xi$, $f^{0}:=f$, $\Delta_{n}Y:=Y^{n}-Y$, and similarly for other functions. 
	Let $K \geq 1$ and $\varepsilon \in \left(0, \frac{1}{K}\right]$ be constants 
	which are waiting to be determined later. Define 
	\begin{align*}
		c_{n}&:=\left\|\Delta_{n}\xi\right\|_{2}+\left\|\Delta_{n}f\left(0, 0\right)\right\|_{\mathbb{L}^{2}}+\left\|G^{n}, G^{n, \prime}; G, G^{\prime}\right\|_{\mathbf{D}_{X^{n}, X}^{\left(p, p\right)\text{-}var}L_{\infty}}^{\left(K\right)}\\
		&\quad+\left\|H^{n}, H^{n, \prime}; H, H^{\prime}\right\|_{\mathbf{D}_{X^{n}, X}^{\left(p, p\right)\text{-}var}L_{2}}^{\left(K\right)}+\rho_{p\text{-}var}\left(\mathbf{X}^{n}, \mathbf{X}\right).
	\end{align*}
	In view of \eqref{convcond1}, there exists a positive constant $M$ such that 
	\begin{equation*}
		\left\|\xi^{n}\right\|_{2} \vee \left\|f^{n}\left(0, 0\right)\right\|_{\mathbb{L}^{2}} \vee \left\|G^{n}, G^{n, \prime}\right\|_{\mathbf{D}_{X^{n}}^{\left(p, p\right)\text{-}var}L_{\infty}}^{\left(1\right)} \vee \left\|H^{n}, H^{n, \prime}\right\|_{\mathbf{D}_{X^{n}}^{\left(p, p\right)\text{-}var}L_{2}}^{\left(1\right)} \vee \left|\mathbf{X}^{n}\right|_{p\text{-}var} \leq M,
	\end{equation*}
	for $n=0, 1, \cdots$. Then there exists a partition $\pi=\left\{0=t_{0}<t_{1}<\cdots<t_{N}=T\right\}$ such that 
	$\left|t_{i+1}-t_{i}\right| \vee \left|\mathbf{X}^{n}\right|_{p\text{-}var; \left[t_{i}, t_{i+1}\right]} \leq \varepsilon$, 
	for $i=0, \cdots, N-1$ and $n=0, 1, \cdots$. Hence, we can assume without loss of generality that 
	$T \vee \sup_{n}\left|\mathbf{X}^{n}\right|_{p\text{-}var} \leq \varepsilon$. 
	By Proposition \ref{solueu} and its proof, we have 
	\begin{equation*}
		\left\|Y^{n}, G^{n}Y^{n}+H^{n}\right\|_{\mathbf{D}_{X^{n}}^{\left(p, p\right)\text{-}var}L_{2}}^{\left(1\right)}+\left\|Z^{n}\right\|_{\mathbb{L}^{2}} \leq C_{M}, \quad  \forall n=0, 1, \cdots.
	\end{equation*}
	For every fixed $n$, since 
	\begin{align*}
		&\Delta_{n}\left[f\left(Y, Z\right)\right]\\
		&\quad=\left[f^{n}\left(Y^{n}, Z^{n}\right)-f^{n}\left(Y, Z\right)\right]+\left[f^{n}\left(Y, Z\right)-f^{n}\left(0, 0\right)-f\left(Y, Z\right)+f\left(0, 0\right)\right]\\
		&\quad \quad+\left[f^{n}\left(0, 0\right)-f\left(0, 0\right)\right]\\
		&\quad=\left[f^{n}\left(Y^{n}, Z^{n}\right)-f^{n}\left(Y, Z\right)\right]+\left[\left(\Delta_{n}f\right)\left(Y, Z\right)-\Delta_{n}f\left(0, 0\right)\right]+\Delta_{n}f\left(0, 0\right),
	\end{align*}
	we have 
	\begin{align}
		&\left\|\Delta_{n}\left[f\left(Y, Z\right)\right]\right\|_{\mathbb{L}^{2}} \notag\\
		&\leq \left\|f^{n}\left(Y^{n}, Z^{n}\right)-f^{n}\left(Y, Z\right)\right\|_{\mathbb{L}^{2}}+\left\|\left(\Delta_{n}f\right)\left(Y, Z\right)-\Delta_{n}f\left(0, 0\right)\right\|_{\mathbb{L}^{2}}+\left\|\Delta_{n}f\left(0, 0\right)\right\|_{\mathbb{L}^{2}} \label{contie1}\\
		&\lesssim \sup_{r \in \left[0, T\right]}\left\|\Delta_{n}Y_{r}\right\|_{2}+\left\|\Delta_{n}Z\right\|_{\mathbb{L}^{2}}+\left\|\left(\Delta_{n}f\right)\left(Y, Z\right)-\Delta_{n}f\left(0, 0\right)\right\|_{\mathbb{L}^{2}}+c_{n}. \notag
	\end{align}
	Then by Lemma \ref{L2scrp}, we have 
	\begin{align}
		&\left\|\int_{0}^{\cdot} f_{r}^{n}\left(Y_{r}^{n}, Z_{r}^{n}\right) d r, 0; \int_{0}^{\cdot} f_{r}\left(Y_{r}, Z_{r}\right) d r, 0\right\|_{\mathbf{D}_{X^{n}, X}^{\left(p, p\right)\text{-}var}L_{2}}^{\left(K\right)} \notag\\
		&\quad \lesssim K\varepsilon^{\frac{1}{2}}\left(\sup_{r \in \left[0, T\right]}\left\|\Delta_{n} Y_{r}\right\|_{2}+\left\|\Delta_{n} Z\right\|_{\mathbb{L}^{2}}\right) \label{contie2}\\
		&\quad \quad+C_{K, \varepsilon}\left(\left\|\left(\Delta_{n}f\right)\left(Y, Z\right)-\Delta_{n}f\left(0, 0\right)\right\|_{\mathbb{L}^{2}}+c_{n}\right). \notag
	\end{align}
	By Proposition \ref{lcstab}, we have 
	\begin{align*}
		&\left\|G^{n}Y^{n}, G^{n}\left(G^{n}Y^{n}+H^{n}\right)+G^{n, \prime}Y^{n}; GY, G\left(GY+H\right)+G^{\prime}Y\right\|_{\mathbf{D}_{X^{n}, X}^{\left(p, p\right)\text{-}var}L_{2}}^{\left(K\right)}\\
		&\quad \lesssim_{M} \left\|Y^{n}, G^{n}Y^{n}+H^{n};Y, GY+H\right\|_{\mathbf{D}_{X^{n}, X}^{\left(p, p\right)\text{-}var}L_{2}}^{\left(K\right)}+c_{n}. 
	\end{align*}
	Then by Theorem \ref{rsistab}, we have 
	\begin{align}
		&\left\|\int_{0}^{\cdot} G^{n}Y^{n}+H^{n} d \mathbf{X}^{n}, Y^{n}; \int_{0}^{\cdot} GY+H d \mathbf{X}, Y\right\|_{\mathbf{D}_{X^{n}, X}^{\left(p, p\right)\text{-}var}L_{2}}^{\left(K\right)} \notag\\
		&\quad \lesssim_{M} \left(\frac{1}{K}+K\varepsilon\right)\left\|G^{n}Y^{n}+H^{n}, G^{n}\left(G^{n}Y^{n}+H^{n}\right)+G^{n, \prime}Y^{n}+H^{n, \prime};\right. \notag\\ 
		&\quad \quad \left.GY+H, G\left(GY+H\right)+G^{\prime}Y+H^{\prime}\right\|_{\mathbf{D}_{X^{n}, X}^{\left(p, p\right)\text{-}var}L_{2}}^{\left(K\right)}+C_{K, \varepsilon}c_{n} \label{contie3}\\
		&\quad \lesssim_{M} \left(\frac{1}{K}+K\varepsilon\right)\left\|Y^{n}, G^{n}Y^{n}+H^{n};Y, GY+H\right\|_{\mathbf{D}_{X^{n}, X}^{\left(p, p\right)\text{-}var}L_{2}}^{\left(K\right)}+C_{K, \varepsilon}c_{n}, \notag
	\end{align}
	and thus 
	\begin{align}
		&\left\|\Delta_{n}\int_{0}^{T} GY+H d \mathbf{X}\right\|_{2} \notag\\
		&\quad \leq \frac{1}{K}\left\|\int_{0}^{\cdot} G^{n}Y^{n}+H^{n} d \mathbf{X}^{n}, Y^{n}; \int_{0}^{\cdot} GY+H d \mathbf{X}, Y\right\|_{\mathbf{D}_{X^{n}, X}^{\left(p, p\right)\text{-}var}L_{2}}^{\left(K\right)} \label{contie4}\\
		&\quad \lesssim_{M} \frac{1}{K}\left(\frac{1}{K}+K\varepsilon\right)\left\|Y^{n}, G^{n}Y^{n}+H^{n};Y, GY+H\right\|_{\mathbf{D}_{X^{n}, X}^{\left(p, p\right)\text{-}var}L_{2}}^{\left(K\right)}+C_{K, \varepsilon}c_{n}, \notag
	\end{align}
	Define $M^{n}: \Omega \times \left[0, T\right]$ for $n=0, 1, \cdots$ by 
	\begin{equation*} 
		M_{t}^{n}:=\mathbb{E}_{t}\left[\xi^{n}+\int_{0}^{T} f_{r}^{n}\left(Y_{r}^{n}, Z_{r}^{n}\right) d r+\int_{0}^{T} \left(G^{n}Y^{n}+H^{n}\right) d \mathbf{X}^{n}\right], \quad \forall t \in \left[0, T\right].
	\end{equation*}
	Then 
	\begin{equation*}
		M_{t}^{n}=M_{0}^{n}+\int_{0}^{t} Z_{r}^{n} d W_{r}, \quad \forall t \in \left[0, T\right], \quad \forall n=0, 1, \cdots,
	\end{equation*}
	and thus 
	\begin{align*}
		\left\|\Delta_{n}Z\right\|_{\mathbb{L}^{2}} \leq 2\left\|\Delta_{n} M_{T}\right\|_{2} \lesssim \varepsilon^{\frac{1}{2}}\left\|\Delta_{n}\left[f(Y, Z)\right]\right\|_{\mathbb{L}^{2}}+\left\|\Delta_{n}\int_{0}^{T} GY+H d \mathbf{X}\right\|_{2}.
	\end{align*}
	Combining \eqref{contie1}, \eqref{contie4} and the last inequality, we have 
	\begin{align}
		&\left\|\Delta_{n}Z\right\|_{\mathbb{L}^{2}} \notag\\
		&\quad \lesssim_{M} \frac{1}{K}\left(\frac{1}{K}+K\varepsilon\right)\left(\left\|Y^{n}, G^{n}Y^{n}+H^{n}; Y, GY+H\right\|_{\mathbf{D}_{X^{n}, X}^{\left(p, p\right)\text{-}var}L_{2}}^{\left(K\right)}+K\left\|\Delta_{n}Z\right\|_{\mathbb{L}^{2}}\right) \label{contie5}\\
		&\quad \quad+C_{K, \varepsilon}\left(\left\|\left(\Delta_{n}f\right)\left(Y, Z\right)-\Delta_{n}f\left(0, 0\right)\right\|_{\mathbb{L}^{2}}+c_{n}\right). \notag
	\end{align}
	By Lemma \ref{mtg}, we have 
	\begin{align}
		&\left\|\int_{0}^{\cdot} Z_{r}^{n} d W_{r},0; \int_{0}^{\cdot} Z_{r} d W_{r}, 0\right\|_{\mathbf{D}_{X^{n}, X}^{\left(p, p\right)\text{-}var}L_{2}}^{\left(K\right)} \notag\\
		&\quad \lesssim_{M} \left(\frac{1}{K}+K\varepsilon\right)\left(\left\|Y^{n}, G^{n}Y^{n}+H^{n}; Y, GY+H\right\|_{\mathbf{D}_{X^{n}, X}^{\left(p, p\right)\text{-}var}L_{2}}^{\left(K\right)}+K\left\|\Delta_{n}Z\right\|_{\mathbb{L}^{2}}\right) \label{contie6}\\
		&\quad \quad+C_{K, \varepsilon}\left(\left\|\left(\Delta_{n}f\right)\left(Y, Z\right)-\Delta_{n}f\left(0, 0\right)\right\|_{\mathbb{L}^{2}}+c_{n}\right). \notag
	\end{align}
	Note that 
	\begin{equation*}
		\Delta_{n}Y=\int_{\cdot}^{T} \Delta_{n}\left[f_{r}\left(Y_{r}, Z_{r}\right)\right] d r+\Delta_{n}\int_{\cdot}^{T} G\Delta Y d \mathbf{X}-\int_{\cdot}^{T} \Delta_{n} Z_{r} d W_{r}, \quad \forall n=0, 1, \cdots.
	\end{equation*}
	Combining \eqref{contie2}, \eqref{contie3} and \eqref{contie6}, we have 
	\begin{align*}
		&\left\|Y^{n}, G^{n}Y^{n}+H^{n}; Y, GY+H\right\|_{\mathbf{D}_{X^{n}, X}^{\left(p, p\right)\text{-}var}L_{2}}^{\left(K\right)}\\ 
		&\quad \lesssim_{M} \left(\frac{1}{K}+K\varepsilon\right)\left(\left\|Y^{n}, G^{n}Y^{n}+H^{n}; Y, GY+H\right\|_{\mathbf{D}_{X^{n}, X}^{\left(p, p\right)\text{-}var}L_{2}}^{\left(K\right)}+K\left\|\Delta_{n}Z\right\|_{\mathbb{L}^{2}}\right)\\
		&\quad \quad+C_{K, \varepsilon}\left(\left\|\left(\Delta_{n}f\right)\left(Y, Z\right)-\Delta_{n}f\left(0, 0\right)\right\|_{\mathbb{L}^{2}}+c_{n}\right).
	\end{align*}
	Then 
	\begin{align*}
		&\left\|Y^{n}, G^{n}Y^{n}+H^{n}; Y, GY+H\right\|_{\mathbf{D}_{X^{n}, X}^{\left(p, p\right)\text{-}var}L_{2}}^{\left(K\right)}+K\left\|\Delta_{n}Z\right\|_{\mathbb{L}^{2}}\\ 
		&\quad \leq C_{M}\left(\frac{1}{K}+K\varepsilon\right)\left(\left\|Y^{n}, G^{n}Y^{n}+H^{n}; Y, GY+H\right\|_{\mathbf{D}_{X^{n}, X}^{\left(p, p\right)\text{-}var}L_{2}}^{\left(K\right)}+K\left\|\Delta_{n}Z\right\|_{\mathbb{L}^{2}}\right)\\
		&\quad \quad+C_{M, K, \varepsilon}\left(\left\|\left(\Delta_{n}f\right)\left(Y, Z\right)-\Delta_{n}f\left(0, 0\right)\right\|_{\mathbb{L}^{2}}+c_{n}\right).
	\end{align*}
	Taking $K:=4\left(1+C_{M}\right)$ and $\varepsilon:=\frac{1}{K^{2}}$ for the above $C_{M}$, 
	we have $C_{M}\left(\frac{1}{K}+K\varepsilon^{\frac{1}{2}}\right) \leq \frac{1}{2}$ and thus 
	\begin{align*}
		&\left\|Y^{n}, G^{n}Y^{n}+H^{n}; Y, GY+H\right\|_{\mathbf{D}_{X^{n}, X}^{\left(p, p\right)\text{-}var}L_{2}}^{\left(K\right)}+K\left\|\Delta_{n}Z\right\|_{\mathbb{L}^{2}}\\
		&\quad \lesssim_{M} \left\|\left(\Delta_{n}f\right)\left(Y, Z\right)-\Delta_{n}f\left(0, 0\right)\right\|_{\mathbb{L}^{2}}+c_{n}. 
	\end{align*}
	In view of \eqref{convcond2}, we have 
	\begin{equation*}
		\left(\Delta_{n}f\right)\left(Y, Z\right)-\Delta_{n}f\left(0, 0\right) \rightarrow 0\ in\ measure\ d\mathbb{P} \times dt, \quad as\ n \rightarrow \infty.
	\end{equation*}
	Since 
	\begin{equation*}
		\left\|\left(\Delta_{n}f\right)\left(Y, Z\right)-\Delta_{n}f\left(0, 0\right)\right\|_{\mathbb{L}^{2}} \lesssim \sup_{r \in \left[0, T\right]}\left\|Y_{r}\right\|_{2}+\left\|Z\right\|_{\mathbb{L}^{2}} \lesssim C_{M}, \quad n=0, 1, \cdots, 
	\end{equation*}
	the dominated convergence theorem gives 
	\begin{equation*}
		\lim_{n \rightarrow \infty}\left\|\left(\Delta_{n}f\right)\left(Y, Z\right)-\Delta_{n}f\left(0, 0\right)\right\|_{\mathbb{L}^{2}}=0. 
	\end{equation*}
	Therefore, we get \eqref{conv}.
\end{proof}

\section{Application to systems of rough PDEs} \label{S6}
We now specialize to a Markovian model. 
For any $\left(s, x\right) \in \left[0, T\right] \times \mathbb{R}^{d}$ 
and $\mathbf{X}=\left(X, \mathbb{X}\right) \in \mathscr{C}_{g}^{0, p\text{-}var}\left([0, T], \mathbb{R}^{e}\right)$, 
consider the following SDE and rough BSDE 
\begin{align}
	S_{t}^{s, x, \mathbf{X}}&=x+\int_{s}^{t} b_{r}\left(S_{r}^{s, x, \mathbf{X}}, \mathbf{X}\right) d r+\int_{s}^{t} \sigma_{r}\left(S_{r}^{s, x, \mathbf{X}}, \mathbf{X}\right) d W_{r}, \quad t \in \left[s, T\right]; \label{fbsde1}\\
	Y_{t}^{s, x, \mathbf{X}}&=l\left(S_{T}^{s, x, \mathbf{X}}, \mathbf{X}\right)+\int_{t}^{T} f_{r}\left(S_{r}^{s, x, \mathbf{X}}, Y_{r}^{s, x, \mathbf{X}}, Z_{r}^{s, x, \mathbf{X}}, \mathbf{X}\right) d r \notag\\
	&\quad+\int_{t}^{T} \left(G_{r}\left(\mathbf{X}\right)Y_{r}^{s, x, \mathbf{X}}+h_{r}\left(S_{r}^{s, x, \mathbf{X}}, \mathbf{X}\right)+H_{r}\left(\mathbf{X}\right)\right) d \mathbf{X}_{r} \label{fbsde2}\\
	&\quad-\int_{t}^{T} Z_{r}^{s, x, \mathbf{X}} d W_{r}, \quad t \in \left[s, T\right]. \notag
\end{align}
Here, $S$ is $d$-dimensional and $Y$ is $k$-dimensional. 
Coefficients 
$b: \left[0, T\right] \times \mathbb{R}^{d} \times \mathscr{C}_{g}^{0, p\text{-}var} \rightarrow \mathbb{R}^{d}$, 
$\sigma: \left[0, T\right] \times \mathbb{R}^{d} \times \mathscr{C}_{g}^{0, p\text{-}var} \rightarrow \mathbb{R}^{d \times d}$, 
$l: \mathbb{R}^{d} \times \mathscr{C}_{g}^{0, p\text{-}var} \rightarrow \mathbb{R}^{k}$, 
$f: \left[0, T\right] \times \mathbb{R}^{d} \times \mathbb{R}^{k} \times \mathbb{R}^{{k} \times d} \times \mathscr{C}_{g}^{0, p\text{-}var} \rightarrow \mathbb{R}^{k}$, 
$G: \left[0, T\right] \times \mathscr{C}_{g}^{0, p\text{-}var} \rightarrow \mathcal{L}\left(\mathbb{R}^{k}, \mathbb{R}^{k \times e}\right)$, 
$G^{\prime}: \left[0, T\right] \times \mathscr{C}_{g}^{0, p\text{-}var} \rightarrow \mathcal{L}\left(\mathbb{R}^{k}, \mathcal{L}\left(\mathbb{R}^{e}, \mathbb{R}^{k \times e}\right)\right)$ 
(which is implied in \eqref{fbsde2}), 
$h: \left[0, T\right] \times \mathbb{R}^{d} \times \mathscr{C}_{g}^{0, p\text{-}var} \rightarrow \mathbb{R}^{k \times e}$, 
$H: \left[0, T\right] \times \mathscr{C}_{g}^{0, p\text{-}var} \rightarrow \mathbb{R}^{k \times e}$ 
and $H^{\prime}: \left[0, T\right] \times \mathscr{C}_{g}^{0, p\text{-}var} \rightarrow \mathcal{L}\left(\mathbb{R}^{e}, \mathbb{R}^{k \times e}\right)$ 
(which is also implied in \eqref{fbsde2}) are deterministic and depend on $\mathbf{X}$. 
Equations \eqref{fbsde1} and \eqref{fbsde2} are connected to the following system of rough PDEs 
\begin{equation} \label{rpde}
	\left\{
	\begin{aligned}
		&du\left(t, x, \mathbf{X}\right)+\frac{1}{2}D^{2}u\left(t, x, \mathbf{X}\right)\sigma_{t}^{2}\left(x, \mathbf{X}\right)dt+Du\left(t, x, \mathbf{X}\right)b_{t}\left(x, \mathbf{X}\right)dt\\
		&\quad+f_{t}\left(x, u\left(t, x, \mathbf{X}\right), Du\left(t, x, \mathbf{X}\right)\sigma_{t}\left(x, \mathbf{X}\right), \mathbf{X}\right)dt\\
		&\quad+\left[G_{t}\left(\mathbf{X}\right)u\left(t, x, \mathbf{X}\right)+h_{t}\left(x, \mathbf{X}\right)+H_{t}\left(\mathbf{X}\right)\right]d\mathbf{X}_{t}=0,\\
		&\quad \left(t, x, \mathbf{X}\right) \in \left[0, T\right) \times \mathbb{R}^{d} \times \mathscr{C}_{g}^{0, p\text{-}var};\\
		&u\left(T, x, \mathbf{X}\right)=l\left(x, \mathbf{X}\right), \quad \left(x, \mathbf{X}\right) \in \mathbb{R}^{d} \times \mathscr{C}_{g}^{0, p\text{-}var},
	\end{aligned}
	\right.
\end{equation}
where 
\begin{equation*}
	D^{2}u\left(t, x, \mathbf{X}\right)\sigma_{t}^{2}\left(x, \mathbf{X}\right):=\left(\begin{array}{c}
		\operatorname{Tr}\left[D^{2}u_{1}\left(t, x, \mathbf{X}\right)\left(\sigma\sigma^{\top}\right)_{t}\left(x, \mathbf{X}\right)\right] \\
		\vdots \\
		\operatorname{Tr}\left[D^{2}u_{k}\left(t, x, \mathbf{X}\right)\left(\sigma\sigma^{\top}\right)_{t}\left(x, \mathbf{X}\right)\right]
		\end{array}\right).
\end{equation*}
Denote by $BUC\left(V, \bar{V}\right)$ the space of bounded uniformly continuous 
functions from $V$ to $\bar{V}$ with the uniform topology. 
The space of (deterministic) controlled rough paths (controlled by $X$) of finite 
$\left(p, p\right)$-variation with values in $V$ is denoted by 
$\mathscr{D}_{X}^{\left(p, p\right)\text{-}var}\left(\left[0, T\right], V\right)$. 
It can be regarded as a subspace of 
$\mathscr{D}_{X}^{\left(p, p\right)\text{-}var}L_{\infty}\left(\left[0, T\right], \Omega; V\right)$. 
For $K \geq 1$, we define the norm $\left|\cdot\right|_{\mathscr{D}_{X}^{\left(p, p\right)\text{-}var}}^{\left(K\right)}$ 
and ``distance'' $\left|\cdot; \cdot\right|_{\mathscr{D}_{X, \bar{X}}^{\left(p, p\right)\text{-}var}}^{\left(K\right)}$ 
like defining $\left\|\cdot\right\|_{\mathbf{D}_{X}^{\left(p, p\right)\text{-}var}L_{\infty}}^{\left(K\right)}$ 
and $\left\|\cdot;\cdot\right\|_{\mathbf{D}_{X, \bar{X}}^{\left(p, p\right)\text{-}var}L_{\infty}}^{\left(K\right)}$ 
by \eqref{norm} and \eqref{dist}, but removing the dependency on $\omega$. 
Denote by $C_{b}^{1, 2}\left(\left[0, T\right] \times \mathbb{R}^{d}, V\right)$ 
the space of functions $F: \left[0, T\right] \times \mathbb{R}^{d} \rightarrow V$ 
which are once and twice continuously differentiable with respect to $t$ and $x$, 
respectively, such that 
\begin{equation*}
	\left|F\right|_{C_{b}^{1, 2}}:=\left|F\right|_{\infty}+\left|\partial_{t}F\right|_{\infty}+\left|DF\right|_{\infty}+\left|D^{2}F\right|_{\infty} < \infty.
\end{equation*}
\begin{definition} \label{rpdesolu}
	We call $u: \left[0, T\right] \times \mathbb{R}^{d} \times \mathscr{C}_{g}^{0, p\text{-}var} \rightarrow \mathbb{R}^{k}$ 
	a solution to \eqref{rpde} if the map 
	$\mathbf{X} \mapsto u\left(\cdot, \cdot, \mathbf{X}\right)$ 
	is continuous from $\mathscr{C}_{g}^{0, p\text{-}var}$ to 
	$BUC\left(\left[0, T\right] \times \mathbb{R}^{d}, \mathbb{R}^{k}\right)$, 
	and for any smooth path 
	$X \in C^{\infty}\left(\left[0, T\right], \mathbb{R}^{e}\right)$ 
	and its canonical rough lift $\mathbf{X}$, 
	$u\left(\cdot, \cdot, \mathbf{X}\right)$ is a viscosity solution to the following system of PDEs 
	\begin{equation} \label{pde}
		\left\{
		\begin{aligned}
			&\partial_{t}v\left(t, x\right)+\frac{1}{2}D^{2}v\left(t, x\right)\sigma_{t}^{2}\left(x, \mathbf{X}\right)+Dv\left(t, x\right)b_{t}\left(x, \mathbf{X}\right)\\
			&\quad+f_{t}\left(x, v\left(t, x\right), Dv\left(t, x\right)\sigma_{t}\left(x, \mathbf{X}\right), \mathbf{X}\right)\\
			&\quad+\left[G_{t}\left(\mathbf{X}\right)v\left(t, x\right)+h_{t}\left(x, \mathbf{X}\right)+H_{t}\left(\mathbf{X}\right)\right]\dot{X}_{t}=0, \quad \left(t, x\right) \in \left[0, T\right) \times \mathbb{R}^{d};\\
			&v\left(T, x\right)=l\left(x, \mathbf{X}\right), \quad x \in \mathbb{R}^{d}.
		\end{aligned}
		\right.
	\end{equation}
\end{definition}
We introduce the following assumption. 
\begin{assumption} \label{rpdeassu}
	For any $\mathbf{X} \in \mathscr{C}_{g}^{0, p\text{-}var}$, 
	\begin{enumerate}[(i)]
		\item $b_{\cdot}\left(\cdot, \mathbf{X}\right)$ and $\sigma_{\cdot}\left(\cdot, \mathbf{X}\right)$ 
		are bounded continuous and uniformly Lipschitz continuous in $x$;
		\item $l\left(\cdot, \mathbf{X}\right)$ is bounded and uniformly Lipschitz continuous;
		\item $f_{\cdot}\left(\cdot, \cdot, \cdot, \mathbf{X}\right)$ is bounded continuous 
		and uniformly Lipschitz continuous in $x, y$ and $z$;
		\item $h_{\cdot}\left(\cdot, \mathbf{X}\right) \in C_{b}^{1, 2}\left(\left[0, T\right] \times \mathbb{R}^{d}, \mathbb{R}^{k \times e}\right)$;
  		\item $\left(G\left(\mathbf{X}\right), G^{\prime}\left(\mathbf{X}\right)\right), \in \mathscr{D}_{X}^{\left(p, p\right)\text{-}var}\left(\left[0, T\right], \mathcal{L}\left(\mathbb{R}^{k}, \mathbb{R}^{k \times e}\right)\right)$;
    	\item $\left(H\left(\mathbf{X}\right), H^{\prime}\left(\mathbf{X}\right)\right) \in \mathscr{D}_{X}^{\left(p, p\right)\text{-}var}\left(\left[0, T\right], \mathbb{R}^{k \times e}\right)$;
     	\item for any sequence of smooth paths 
		$X^{n} \in C^{\infty}\left(\left[0, T\right], \mathbb{R}^{e}\right), n=1, 2, \cdots$ 
		such that 
		\begin{equation*}
			\lim_{n \rightarrow \infty} \rho_{p\text{-}var}\left(\mathbf{X}^{n}, \mathbf{X}\right)=0, 
		\end{equation*}
		we have 
		\begin{align*}
			&\left|b_{\cdot}\left(\cdot, \mathbf{X}^{n}\right)-b_{\cdot}\left(\cdot, \mathbf{X}\right)\right|_{\infty}+\left|\sigma_{\cdot}\left(\cdot, \mathbf{X}^{n}\right)-\sigma_{\cdot}\left(\cdot, \mathbf{X}\right)\right|_{\infty}+\left|l\left(\cdot, \mathbf{X}^{n}\right)-l\left(\cdot, \mathbf{X}\right)\right|_{\infty} \notag\\
			&\quad+\left|f_{\cdot}\left(\cdot, \cdot, \cdot, \mathbf{X}^{n}\right)-f_{\cdot}\left(\cdot, \cdot, \cdot, \mathbf{X}\right)\right|_{\infty}+\left|h_{\cdot}\left(\cdot, \mathbf{X}^{n}\right)-h_{\cdot}\left(\cdot, \mathbf{X}\right)\right|_{C_{b}^{1, 2}} \notag\\
			&\quad+\left|G\left(\mathbf{X}^{n}\right), G^{\prime}\left(\mathbf{X}^{n}\right); G\left(\mathbf{X}\right), G^{\prime}\left(\mathbf{X}\right)\right|_{\mathscr{D}_{X^{n}, X}^{\left(p, p\right)\text{-}var}}^{\left(1\right)} \notag\\
			&\quad+\left|H\left(\mathbf{X}^{n}\right), H^{\prime}\left(\mathbf{X}^{n}\right); H\left(\mathbf{X}\right), H^{\prime}\left(\mathbf{X}\right)\right|_{\mathscr{D}_{X^{n}, X}^{\left(p, p\right)\text{-}var}}^{\left(1\right)} \rightarrow 0, \quad as\ n \rightarrow \infty.
		\end{align*}
	\end{enumerate}
\end{assumption}
\subsection{Feynman--Kac formula} \label{S6.1}
We now give the following result on solutions to system of rough PDEs. 
\begin{theorem} \label{rpdesolueu}
	Under Assumption \ref{rpdeassu}, the system of rough PDEs \eqref{rpde} has a solution $u$, 
	and we have the stochastic representation 
	\begin{equation*}
		u\left(t, x, \mathbf{X}\right)=Y_{t}^{t, x, \mathbf{X}}, \quad \forall \left(t, x, \mathbf{X}\right) \in \left[0, T\right] \times \mathbb{R}^{d} \times \mathscr{C}_{g}^{0, p\text{-}var}, 
	\end{equation*}
	where $\left(Y^{s, x, \mathbf{X}}, Z^{s, x, \mathbf{X}}\right)$ 
	is the solution to rough BSDE \eqref{fbsde2}. 
	Moreover, if additionally coefficients $b$ and $\sigma$ are time-invariant, 
	then $u$ is the unique solution to \eqref{rpde}. 
\end{theorem}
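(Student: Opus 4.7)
The plan is to combine the classical nonlinear Feynman--Kac formula with the rough-path stability theory of Section~5. First, under Assumption \ref{rpdeassu} the forward SDE \eqref{fbsde1} admits for each $\left(s, x, \mathbf{X}\right)$ a unique strong solution $S^{s, x, \mathbf{X}}$ with the usual Lipschitz-in-$\left(s, x\right)$ moment bounds. The coefficients of the rough BSDE \eqref{fbsde2} then verify Assumption \ref{assu}; the only nonobvious point is that $\bigl(h_\cdot\left(S^{s,x,\mathbf{X}}_\cdot, \mathbf{X}\right) + H\left(\mathbf{X}\right), H'\left(\mathbf{X}\right)\bigr)$ belongs to $\mathbf{D}_X^{\left(p,p\right)\text{-}var}L_2$, which holds because $h\left(S, \mathbf{X}\right)$ carries no $X$-increment, its conditional-expectation remainder is of order $t-s$ by the Markov property and $h \in C_b^{1,2}$, while the martingale part of $h\left(S_t\right) - h\left(S_s\right)$ has the right $L_2$-variation by Lemma~\ref{mtg}. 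Theorem~\ref{solueu} then yields a unique $\left(Y^{s,x,\mathbf{X}}, Z^{s,x,\mathbf{X}}\right)$, and one sets
\begin{equation*}
u\left(s, x, \mathbf{X}\right) := Y_s^{s, x, \mathbf{X}}.
\end{equation*}

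When $\mathbf{X}$ is the canonical rough lift of a smooth path $X$, equation \eqref{fbsde2} reduces to a standard BSDE with driver
\begin{equation*}
f_r\left(S_r, y, z, \mathbf{X}\right) + \bigl(G_r\left(\mathbf{X}\right) y + h_r\left(S_r, \mathbf{X}\right) + H_r\left(\mathbf{X}\right)\bigr)\dot X_r,
\end{equation*}
and the Pardoux--Peng nonlinear Feynman--Kac theorem identifies $u\left(\cdot, \cdot, \mathbf{X}\right)$ as the unique bounded uniformly continuous viscosity solution of the PDE system \eqref{pde}; in particular $u\left(\cdot, \cdot, \mathbf{X}\right) \in BUC$ for every smooth-path lift.

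For the continuity in $\mathbf{X}$, fix $\mathbf{X} \in \mathscr{C}_g^{0, p\text{-}var}$ and take smooth-path lifts $\mathbf{X}^k \to \mathbf{X}$ in $\rho_{p\text{-}var}$, which exist by Definition~\ref{grp}. Standard SDE stability together with Assumption \ref{rpdeassu}(vii) gives $\mathbb{E}\sup_t \left|S^{s,x,\mathbf{X}^k}_t - S^{s,x,\mathbf{X}}_t\right|^2 \to 0$ uniformly on compact sets in $\left(s, x\right)$, plus $k$-uniform Lipschitz estimates of $S^{s,x,\mathbf{X}^k}$ in $\left(s, x\right)$. One then verifies hypotheses \eqref{convcond1}--\eqref{convcond2} of Theorem~\ref{solumc} for the data assembled from $f\left(S, \cdot, \cdot, \mathbf{X}^k\right)$, $G\left(\mathbf{X}^k\right)$, $h\left(S, \mathbf{X}^k\right) + H\left(\mathbf{X}^k\right)$, and the integrator $\mathbf{X}^k$, obtaining $u\left(s, x, \mathbf{X}^k\right) \to u\left(s, x, \mathbf{X}\right)$ for each $\left(s, x\right)$. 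Proposition~\ref{solubd} applied uniformly in $\left(s, x\right)$ yields equicontinuity of the family $\{u\left(\cdot, \cdot, \mathbf{X}^k\right)\}_k$ in $\left(t, x\right)$, so the convergence is uniform on compact sets, $u\left(\cdot, \cdot, \mathbf{X}\right) \in BUC$, and $\mathbf{X} \mapsto u\left(\cdot, \cdot, \mathbf{X}\right)$ is continuous into $BUC$. Combined with the smooth-path identification, $u$ is a solution of \eqref{rpde} per Definition~\ref{rpdesolu}; uniqueness follows because any other solution $\bar u$ must agree with $u$ on all smooth-path lifts (by classical uniqueness of viscosity solutions to \eqref{pde}) and hence on all of $\mathscr{C}_g^{0, p\text{-}var}$ by the enforced $BUC$-continuity in $\mathbf{X}$.

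The main obstacle is the continuity step: translating $\rho_{p\text{-}var}\left(\mathbf{X}^k, \mathbf{X}\right) \to 0$ into convergence of the stochastic controlled rough path $\bigl(h_\cdot\left(S^{s,x,\mathbf{X}^k}_\cdot, \mathbf{X}^k\right) + H\left(\mathbf{X}^k\right), H'\left(\mathbf{X}^k\right)\bigr)$ in the mixed-scale norm $\left\|\cdot\right\|_{\mathbf{D}_{X^k, X}^{\left(p, p\right)\text{-}var}L_2}^{\left(K\right)}$ demanded by Theorem~\ref{solumc}. One must separately control the classical SDE stability of $S$, its dependence on $\mathbf{X}$ through $b, \sigma$, and the joint continuity of $h$ and $H, H'$ in $\mathbf{X}$, and then combine these via BDG and Young-integral estimates to bound the $\frac{p}{2}$-variation of the remainder. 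A secondary technical point is the $k$-uniform $BUC$-regularity of $u\left(\cdot, \cdot, \mathbf{X}^k\right)$ in $\left(t, x\right)$, which is obtained by differentiating the FBSDE with respect to the initial data and applying the a-priori bounds of Proposition~\ref{solubd}.
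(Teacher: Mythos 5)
Your overall architecture is the paper's: solve the forward SDE, check that the data of \eqref{fbsde2} fit Assumption \ref{assu} (the point about $h(S^{s,x,\mathbf{X}},\mathbf{X})$ having zero Gubinelli derivative, drift-type conditional remainder and a martingale part handled by Lemma \ref{mtg} is exactly how the paper argues, via It\^o's formula together with Lemmas \ref{L2scrp} and \ref{mtg}), define $u(s,x,\mathbf{X}):=Y_s^{s,x,\mathbf{X}}$, identify $u(\cdot,\cdot,\mathbf{X})$ with the $BUC$ viscosity solution of \eqref{pde} for canonical lifts of smooth paths via the classical Feynman--Kac results, prove continuity in $\mathbf{X}$ through a stability argument in the spirit of Theorem \ref{solumc}, and get uniqueness from viscosity uniqueness on smooth lifts plus density in $\mathscr{C}_g^{0,p\text{-}var}$.

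However, there is a genuine gap in your continuity step. Definition \ref{rpdesolu} requires $\mathbf{X}\mapsto u(\cdot,\cdot,\mathbf{X})$ to be continuous into $BUC([0,T]\times\mathbb{R}^{n_1},\mathbb{R}^{n_2})$ with the \emph{uniform} topology on the whole (unbounded) domain, so convergence that is only uniform on compact sets of $(t,x)$ does not suffice, and it also does not by itself give that the limit $u(\cdot,\cdot,\mathbf{X})$ is bounded and \emph{uniformly} continuous. Your proposed repair --- equicontinuity of $\{u(\cdot,\cdot,\mathbf{X}^k)\}_k$ obtained ``by differentiating the FBSDE with respect to the initial data'' and invoking Proposition \ref{solubd} --- is not available: Assumption \ref{rpdeassu} only gives Lipschitz continuity of $b,\sigma,l,f$ in $x$ (no differentiability), and Proposition \ref{solubd} yields a priori norm bounds, not a $(t,x)$-modulus of continuity. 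The correct (and simpler) route, which is the paper's, is to observe that every quantity entering the hypotheses of Theorem \ref{solumc} for the data built from $(s,x)$ is controlled by $|\Delta_k b|_{\infty}$, $|\Delta_k\sigma|_{\infty}$, $|\Delta_k l|_{\infty}$, $|\Delta_k f|_{\infty}$, $|\Delta_k h|_{C_b^{1,2}}$ and $\rho_{p\text{-}var}(\mathbf{X}^k,\mathbf{X})$, none of which depends on $(s,x)$ (thanks to the global sup-norm convergences in Assumption \ref{rpdeassu}(vii) and the $(s,x)$-uniform SDE stability estimate); hence $\sup_{(s,x)}\sup_{t\in[s,T]}\|Y_t^{s,x,\mathbf{X}^k}-Y_t^{s,x,\mathbf{X}}\|_2\to 0$, i.e. $u(\cdot,\cdot,\mathbf{X}^k)\to u(\cdot,\cdot,\mathbf{X})$ uniformly on all of $[0,T]\times\mathbb{R}^{n_1}$. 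This simultaneously shows $u(\cdot,\cdot,\mathbf{X})\in BUC$ (uniform limit of the $BUC$ viscosity solutions) and the required continuity into $BUC$, with no equicontinuity argument needed. With that replacement your existence and uniqueness steps go through as in the paper.
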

\begin{proof}
	For any $\left(s, x, \mathbf{X}\right) \in \left[0, T\right] \times \mathbb{R}^{d} \times \mathscr{C}_{g}^{0, p\text{-}var}$,  
	by the classical SDE theory (e.g. \cite{BMaSC}*{Theorem 5.2.9}), 
	the SDE \eqref{fbsde1} has a unique (strong) solution $S^{s, x, \mathbf{X}}$. 
	Applying Itô's formula, we have 
	\begin{align*}
		&h_{t}\left(S_{t}^{s, x, \mathbf{X}}, \mathbf{X}\right)\\
		&\quad=h_{s}\left(x, \mathbf{X}\right)+\int_{s}^{t} \left(\partial_{t}h_{r}\left(S_{r}^{s, x, \mathbf{X}}, \mathbf{X}\right)+Dh_{r}\left(S_{r}^{s, x, \mathbf{X}}, \mathbf{X}\right)b_{r}\left(S_{r}^{s, x, \mathbf{X}}, \mathbf{X}\right)\right)dr\\
		&\quad \quad+\frac{1}{2}\int_{s}^{t}D^{2}h_{r}\left(S_{r}^{s, x, \mathbf{X}}, \mathbf{X}\right)\sigma_{r}^{2}\left(S_{r}^{s, x, \mathbf{X}}, \mathbf{X}\right)d r\\
		&\quad \quad+\int_{s}^{t} Dh_{r}\left(S_{r}^{s, x, \mathbf{X}}, \mathbf{X}\right)\sigma_{r}\left(S_{r}^{s, x, \mathbf{X}}, \mathbf{X}\right) d W_{r}, \quad \forall t \in \left[s, T\right].
	\end{align*}
	Then by Lemmas \ref{L2scrp} and \ref{mtg}, 
	$\left(h\left(S^{s, x, \mathbf{X}}, \mathbf{X}\right), 0\right) \in \mathbf{D}_{X}^{\left(p, p\right)\text{-}var}L_{2}$. 
	By Theorem \ref{solueu}, the rough BSDE \eqref{fbsde2} has a unique solution 
	$\left(Y^{s, x, \mathbf{X}}, Z^{s, x, \mathbf{X}}\right)$. Define 
	\begin{equation*}
		u\left(t, x, \mathbf{X}\right):=Y_{t}^{t, x, \mathbf{X}}, \quad \forall \left(t, x, \mathbf{X}\right) \in \left[0, T\right] \times \mathbb{R}^{d} \times \mathscr{C}_{g}^{0, p\text{-}var}.
	\end{equation*}
	\indent
	If $\mathbf{X}$ is the canonical rough lift of a smooth path 
	$X \in C^{\infty}\left(\left[0, T\right], \mathbb{R}^{e}\right)$, 
	by \cite{FK1}*{Theorem 2.2} $u\left(\cdot, \cdot, \mathbf{X}\right)$ 
	is a viscosity solution to \eqref{pde} in $BUC$. 
	For general $\mathbf{X}$, take a sequence of smooth paths 
	$X^{n} \in C^{\infty}\left(\left[0, T\right], \mathbb{R}^{e}\right)$, 
	$n=1, 2, \cdots$ such that 
	\begin{equation*}
		\lim_{n \rightarrow \infty} \rho_{p\text{-}var}\left(\mathbf{X}^{n}, \mathbf{X}\right)=0. 
	\end{equation*}
	For $n=1, 2, \cdots$ put 
	$\Delta_{n}S^{s, x}:=S^{s, x, \mathbf{X}^{n}}-S^{s, x, \mathbf{X}}$, 
	$\Delta_{n}b:=b_{\cdot}\left(\cdot, \mathbf{X}^{n}\right)-b_{\cdot}\left(\cdot, \mathbf{X}\right)$ 
	and similarly for other functions. 
	For any fixed $\left(s, x\right) \in \left[0, T\right] \times \mathbb{R}^{d}$ 
	and $n$, we have 
	\begin{align*}
		&\mathbb{E}\sup_{r \in \left[s, T\right]}\left|\Delta_{n}S_{r}^{s, x}\right|^{2}\\
		&\quad \lesssim \mathbb{E}\left[\left(\int_{s}^{T}\left|\left(\Delta_{n}b\right)_{r}\left(S_{r}^{s, x, \mathbf{X}^{n}}\right)\right|dr\right)^{2}+\int_{s}^{T}\left|\left(\Delta_{n}\sigma\right)_{r}\left(S_{r}^{s, x, \mathbf{X}^{n}}\right)\right|^{2}dr\right]\\
		&\quad \lesssim_{T} \left|\Delta_{n}b\right|_{\infty}^{2}+\left|\Delta_{n}\sigma \right|_{\infty}^{2}. 
	\end{align*}
	Then we have 
	\begin{align*}
		\left\|\Delta_{n}\left[l\left(S_{T}^{s, x, \mathbf{X}}, \mathbf{X}\right)\right]\right\|_{2} &\leq \left\|\left(\Delta_{n}l\right)\left(S_{T}^{s, x, \mathbf{X}^{n}}\right)\right\|_{2}+\left\|l\left(S_{T}^{s, x, \mathbf{X}^{n}}, \mathbf{X}\right)-l\left(S_{T}^{s, x, \mathbf{X}}, \mathbf{X}\right)\right\|_{2}\\
		&\lesssim \left|\Delta_{n}l\right|_{\infty}+\left(\mathbb{E}\sup_{r \in \left[s, T\right]}\left|\Delta_{n}S_{r}^{s, x}\right|^{2}\right)^{\frac{1}{2}}\\
		&\lesssim_{T} \left|\Delta_{n}b\right|_{\infty}+\left|\Delta_{n}\sigma \right|_{\infty}+\left|\Delta_{n}l\right|_{\infty}.
	\end{align*}
	Simiarily, we have 
	\begin{align*}
		&\left\|f\left(S^{s, x, \mathbf{X}^{n}}, Y^{s, x, \mathbf{X}^{n}}, Z^{s, x, \mathbf{X}^{n}}, \mathbf{X}^{n}\right)-f\left(S^{s, x, \mathbf{X}}, Y^{s, x, \mathbf{X}^{n}}, Z^{s, x, \mathbf{X}^{n}}, \mathbf{X}\right)\right\|_{\mathbb{L}^{2}}\\
		&\quad \lesssim_{T} \left|\Delta_{n}b\right|_{\infty}+\left|\Delta_{n}\sigma \right|_{\infty}+\left|\Delta_{n}f\right|_{\infty}
	\end{align*}
	and 
	\begin{equation*}
		\left\|\Delta_{n}\left[\partial_{t}h\left(S^{s, x, \mathbf{X}}, \mathbf{X}\right)\right]\right\|_{\mathbb{L}^{2}} \lesssim_{T} \left|\Delta_{n}b\right|_{\infty}+\left|\Delta_{n}\sigma \right|_{\infty}+\left|\Delta_{n}h\right|_{C_{b}^{1, 2}}.
	\end{equation*}
	Since 
	\begin{align*}
		\Delta_{n}\left[Dh\left(S^{s, x, \mathbf{X}}, \mathbf{X}\right)b\left(S^{s, x, \mathbf{X}}, \mathbf{X}\right)\right]&=\Delta_{n}\left[Dh\left(S^{s, x, \mathbf{X}}, \mathbf{X}\right)\right]b\left(S^{s, x, \mathbf{X}^{n}}, \mathbf{X}^{n}\right)\\
		&\quad+Dh\left(S^{s, x, \mathbf{X}}, \mathbf{X}\right)\Delta_{n}\left[b\left(S^{s, x, \mathbf{X}}, \mathbf{X}\right)\right],
	\end{align*}
	we have 
	\begin{align*}
		&\left\|\Delta_{n}\left[Dh\left(S^{s, x, \mathbf{X}}, \mathbf{X}\right)b\left(S^{s, x, \mathbf{X}}, \mathbf{X}\right)\right]\right\|_{\mathbb{L}^{2}}\\
		&\quad \leq \left\|\Delta_{n}\left[Dh\left(S^{s, x, \mathbf{X}}, \mathbf{X}\right)\right]\right\|_{\mathbb{L}^{2}}\left|b_{\cdot}\left(\cdot, \mathbf{X}^{n}\right)\right|_{\infty}+\left|Dh_{\cdot}\left(\cdot, \mathbf{X}\right)\right|_{\infty}\left\|\Delta_{n}\left[b\left(S^{s, x, \mathbf{X}}, \mathbf{X}\right)\right]\right\|_{\mathbb{L}^{2}}\\
		&\quad \lesssim_{T} \left(\sup_{n}\left|\sigma_{\cdot}\left(\cdot, \mathbf{X}^{n}\right)\right|_{\infty}+\left|h_{\cdot}\left(\cdot, \mathbf{X}\right)\right|_{C_{b}^{1, 2}}\right)\left(\left|\Delta_{n}b\right|_{\infty}+\left|\Delta_{n}\sigma \right|_{\infty}+\left|\Delta_{n}h\right|_{C_{b}^{1, 2}}\right).
	\end{align*}
	Simiarily, we have 
	\begin{align*}
		&\left\|\int_{s}^{T} \Delta_{n}\left[Dh_{r}\left(S_{r}^{s, x, \mathbf{X}}, \mathbf{X}\right)\sigma_{r}\left(S_{r}^{s, x, \mathbf{X}}, \mathbf{X}\right)\right] d W_{r}\right\|_{2}\\
		&\quad=\left\|\Delta_{n}\left[Dh\left(S^{s, x, \mathbf{X}}, \mathbf{X}\right)\sigma\left(S^{s, x, \mathbf{X}}, \mathbf{X}\right)\right]\right\|_{\mathbb{L}^{2}}\\
		&\quad \lesssim_{T} \left(\sup_{n}\left|\sigma_{\cdot}\left(\cdot, \mathbf{X}^{n}\right)\right|_{\infty}+\left|h_{\cdot}\left(\cdot, \mathbf{X}\right)\right|_{C_{b}^{1, 2}}\right)\left(\left|\Delta_{n}b\right|_{\infty}+\left|\Delta_{n}\sigma \right|_{\infty}+\left|\Delta_{n}h\right|_{C_{b}^{1, 2}}\right)
	\end{align*}
	and 
	\begin{align*}
		&\left\|\Delta_{n}\left[D^{2}h\left(S^{s, x, \mathbf{X}}, \mathbf{X}\right)\sigma^{2}\left(S^{s, x, \mathbf{X}}, \mathbf{X}\right)\right]\right\|_{\mathbb{L}^{2}}\\
		&\quad \lesssim_{T} \left(\sup_{n}\left|\sigma_{\cdot}\left(\cdot, \mathbf{X}^{n}\right)\right|_{\infty}^{2}+\left|h_{\cdot}\left(\cdot, \mathbf{X}\right)\right|_{C_{b}^{1, 2}}^{2}\right)\left(\left|\Delta_{n}b\right|_{\infty}+\left|\Delta_{n}\sigma \right|_{\infty}+\left|\Delta_{n}h\right|_{C_{b}^{1, 2}}\right).
	\end{align*}
	Note that 
	\begin{align*}
		\Delta_{n}\left[h\left(S^{s, x, \mathbf{X}}, \mathbf{X}\right)\right]&=\Delta_{n}h_{s}\left(x\right)+\int_{s}^{\cdot} \Delta_{n}\left[\partial_{t}h_{r}\left(S_{r}^{s, x, \mathbf{X}}, \mathbf{X}\right)\right] dr\\
		&\quad+\int_{s}^{\cdot} \Delta_{n}\left[Dh_{r}\left(S_{r}^{s, x, \mathbf{X}}, \mathbf{X}\right)b_{r}\left(S_{r}^{s, x, \mathbf{X}}, \mathbf{X}\right)\right]dr\\
		&\quad+\frac{1}{2}\int_{s}^{\cdot}\Delta_{n}\left[D^{2}h_{r}\left(S_{r}^{s, x, \mathbf{X}}, \mathbf{X}\right)\sigma_{r}^{2}\left(S_{r}^{s, x, \mathbf{X}}, \mathbf{X}\right)\right]d r\\
		&\quad+\int_{s}^{\cdot} \Delta_{n}\left[Dh_{r}\left(S_{r}^{s, x, \mathbf{X}}, \mathbf{X}\right)\sigma_{r}\left(S_{r}^{s, x, \mathbf{X}}, \mathbf{X}\right)\right] d W_{r}.
	\end{align*}
	Combining the above inequalities, we have 
	\begin{align*}
		&\left\|h\left(S^{s, x, \mathbf{X}^{n}}, \mathbf{X}^{n}\right), 0; h\left(S^{s, x, \mathbf{X}}, \mathbf{X}\right), 0\right\|_{\mathbf{D}_{X^{n}, X}^{\left(p, p\right)\text{-}var}L_{2}}^{\left(1\right)}\\
		&\quad \lesssim_{T} \left(1+\sup_{n}\left|b_{\cdot}\left(\cdot, \mathbf{X}^{n}\right)\right|_{\infty}+\sup_{n}\left|\sigma_{\cdot}\left(\cdot, \mathbf{X}^{n}\right)\right|_{\infty}^{2}+\left|h_{\cdot}\left(\cdot, \mathbf{X}\right)\right|_{C_{b}^{1, 2}}^{2}\right)\\
		&\quad \quad \times \left(\left|\Delta_{n}b\right|_{\infty}+\left|\Delta_{n}\sigma \right|_{\infty}+\left|\Delta_{n}h\right|_{C_{b}^{1, 2}}\right).
	\end{align*}
	Analogous to the proof of Theorem \ref{solumc}, we have 
	\begin{equation*}
		\lim_{n \rightarrow \infty}\sup_{\left(s, x\right) \in \left[0, T\right] \times \mathbb{R}^{d}}\sup_{r \in \left[s, T\right]}\left\|\Delta_{n}Y_{r}^{s, x}\right\|_{2}=0,
	\end{equation*}
	and thus $u\left(\cdot, \cdot, \mathbf{X}^{n}\right)$ converges uniformly to 
	$u\left(\cdot, \cdot, \mathbf{X}\right)$, as $n \rightarrow \infty$. 
	Therefore, $u$ is a solution to \eqref{rpde}.\\
	\indent
	We now show the uniqueness for time-invariant coefficients $b$ and $\sigma$. 
	Assume that $\bar{u}$ is another solution. 
	For any smooth path $X \in C^{\infty}\left(\left[0, T\right], \mathbb{R}^{e}\right)$ 
	and its canonical rough lift $\mathbf{X}$, by \cite{FK2}*{Theorem 5.1} 
	the system of PDEs \eqref{pde} has a unique viscosity solution in $BUC$. 
	Hence, $\bar{u}\left(\cdot, \cdot, \mathbf{X}\right)=u\left(\cdot, \cdot, \mathbf{X}\right)$. 
	Since $\mathbf{X} \mapsto u\left(\cdot, \cdot, \mathbf{X}\right)$ 
	and $\mathbf{X} \mapsto \bar{u}\left(\cdot, \cdot, \mathbf{X}\right)$ are continuous 
	from $\mathscr{C}_{g}^{0, p\text{-}var}$ to $BUC$, we have $\bar{u}=u$. 
\end{proof}
\begin{remark}
	Our stochastic representation result is new even in the one-dimensional case, 
	for our rough drift coefficient can be time-varying rather than time-invariant 
	as in Diehl and Friz \cite{RBSDE}. 
\end{remark}
\begin{remark}
	Our solution to system of rough PDEs \eqref{rpde} is defined as a continuous extension of 
	the viscosity solution map for systems of PDEs. 
	Similar definitions can be found in \cite{RPDE2}*{Theorem 1} and \cite{RBSDE}*{Theorem 12}. 
	Besides, there are several ways to define viscosity solutions to (one-dimensional) rough PDEs. 
	In Gubinelli et. al \cite{RPDEVS2}*{Definitions 3.6 and 3.8}, 
	they define viscosity and jet-viscosity solutions to fully nonlinear rough PDEs 
	\begin{equation} \label{rpde2}
		\left\{
		\begin{aligned}
			&du\left(t, x\right)=F_{t}\left(x, u, Du, D^{2}u\right)dt+g_{t}\left(x, u, Du\right)d\mathbf{X}_{t}, \quad \left(t, x\right) \in \left(0, T\right] \times \mathbb{R}^{d};\\
			&u\left(0, x\right)=u_{0}\left(x\right), \quad x \in \mathbb{R}^{d},
		\end{aligned}
		\right.
	\end{equation}
	like defining those to PDEs through text functions and semi-jets, respectively. 
	In the particular case that $d=e=1$, $\mathbb{X}=\frac{1}{2}\left(\delta X\right)^{2}$ 
	and $\mathbf{X}$ is truly rough, Buckdahn et al. \cite{RPDEVS1}*{Definition 5.1} 
	define viscosity solutions to rough PDEs \eqref{rpde2} via the method of characteristics, 
	and show that their notion of viscosity solutions is equivalent to the alternative 
	using semi-jets (see \cite{RPDEVS1}*{Proposition 5.3}). 
	However, we may not show that our notion of solutions is consistent with theirs 
	due to the lack of general results for the continuity of those viscosity solutions 
	with respect to driving rough paths.
\end{remark}
\begin{remark}
	We may also solve equation \eqref{rpde} by finding solutions $u$ 
	as continuous maps from $\mathscr{C}_{g}^{0, p\text{-}var}$ 
	to the continuous function space $C\left(\left[0, T\right] \times \mathbb{R}^{d}, \mathbb{R}^{k}\right)$ 
	with pointwise convergence topology or locally uniform topology. 
	In this case, we may appropriately weaken the boundedness and convergence 
	conditions in Assumption \ref{rpdeassu}. 
	We do not pursue the details of the extension here. 
\end{remark}
\subsection{Connection to SPDEs}
We finish this section with the connection between system of rough PDEs \eqref{rpde} with corresponding SPDE. 
Let $\left(\tilde{\Omega}, \tilde{\mathcal{F}}, \tilde{\mathbb{P}}\right)$ be another complete probability space 
carrying an $e$-dimensional Brownian motion $B$, define $\mathbf{B}$ as the Stratonovich lift of $B$, i.e. 
\begin{equation*}
	\mathbb{B}_{s, t}:=\int_{s}^{t} \delta B_{s, r} \otimes \circ d B_{r}, \quad \forall \left(s, t\right) \in \Delta, 
\end{equation*}
and $\mathbf{B}:=\left(B, \mathbb{B}\right)$, where $\circ$ denotes the Stratonovich integration. 
Then $\mathbf{B}\left(\tilde{\omega}\right) \in \mathscr{C}_{g}^{0, p\text{-}var}$ for any $p \in \left(2, 3\right)$ 
and a.s. $\tilde{\omega} \in \tilde{\Omega}$. 
By \cite{RPB1}*{Proposition 5.12}, via the transformation 
\begin{equation*}
	\tilde{u}\left(t, x, \mathbf{X}\right):=u\left(T-t, x, \overleftarrow{\mathbf{X}}\right), \quad \forall \left(t, x, \mathbf{X}\right) \in \left[0, T\right] \times \mathbb{R}^{d} \times \mathscr{C}_{g}^{0, p\text{-}var},
\end{equation*}
where 
	\begin{equation*}
		\overleftarrow{X}_{\cdot}:=X_{T-\cdot}, \quad \overleftarrow{\mathbb{X}}_{\cdot, \cdot}:=\mathbb{X}_{T-\cdot, T-\cdot}, \quad \overleftarrow{\mathbf{X}}:=\left(\overleftarrow{X}, \overleftarrow{\mathbb{X}}\right),
	\end{equation*}
we can transform the system of rough PDEs \eqref{rpde} into forward form 
\begin{equation} \label{frpde}
	\left\{
	\begin{aligned}
		&d\tilde{u}\left(t, x, \mathbf{X}\right)=\frac{1}{2}D^{2}\tilde{u}\left(t, x, \mathbf{X}\right)\tilde{\sigma}_{t}^{2}\left(x, \mathbf{X}\right)dt+D\tilde{u}\left(t, x, \mathbf{X}\right)\tilde{b}_{t}\left(x, \mathbf{X}\right)dt\\
		&\quad+\tilde{f}_{t}\left(x, \tilde{u}\left(t, x, \mathbf{X}\right), D\tilde{u}\left(t, x, \mathbf{X}\right)\tilde{\sigma}_{t}\left(x, \mathbf{X}\right), \mathbf{X}\right)dt\\
		&\quad+\left[\tilde{G}_{t}\left(\mathbf{X}\right)\tilde{u}\left(t, x, \mathbf{X}\right)+\tilde{h}_{t}\left(x, \mathbf{X}\right)+\tilde{H}_{t}\left(\mathbf{X}\right)\right]d\mathbf{X}_{t},\\
		&\quad \left(t, x, \mathbf{X}\right) \in \left[0, T\right) \times \mathbb{R}^{d} \times \mathscr{C}_{g}^{0, p\text{-}var};\\
		&\tilde{u}\left(0, x, \mathbf{X}\right)=\tilde{l}\left(x, \mathbf{X}\right), \quad \left(x, \mathbf{X}\right) \in \mathbb{R}^{d} \times \mathscr{C}_{g}^{0, p\text{-}var},
	\end{aligned}
	\right.
\end{equation}
where 
\begin{equation*}
	\tilde{\varphi}_{t}\left(x, \mathbf{X}\right):=\varphi_{T-t}\left(x, \overleftarrow{\mathbf{X}}\right), \quad \forall \left(t, x, \mathbf{X}\right) \in \left[0, T\right] \times \mathbb{R}^{d} \times \mathscr{C}_{g}^{0, p\text{-}var},
\end{equation*}
for $\varphi=b, \sigma, l, f, h, G, G^{\prime}, H$ and $H^{\prime}$ (for simplicity we omit other variables). 
If 
\begin{equation} \label{pathd}
	\tilde{\varphi}_{t}\left(x, \mathbf{X}\right)=\tilde{\varphi}_{t}\left(x, X_{\cdot \wedge t}\right), \quad \forall \left(t, x, \mathbf{X}\right) \in \left[0, T\right] \times \mathbb{R}^{d} \times \mathscr{C}_{g}^{0, p\text{-}var}.
\end{equation}
Then the system of rough PDEs \eqref{frpde} is naturally connected to SPDE 
\begin{equation} \label{spde}
	\left\{
	\begin{aligned}
		&d\tilde{v}\left(t, x\right)=\frac{1}{2}D^{2}\tilde{v}\left(t, x\right)\tilde{\sigma}_{t}^{2}\left(x, B_{\cdot \wedge t}\right)dt+D\tilde{v}\left(t, x\right)\tilde{b}_{t}\left(x, B_{\cdot \wedge t}\right)dt\\
		&\quad+\tilde{f}_{t}\left(x, \tilde{v}\left(t, x\right), D\tilde{v}\left(t, x\right)\tilde{\sigma}_{t}\left(x, B_{\cdot \wedge t}\right), B_{\cdot \wedge t}\right)dt\\
		&\quad+\left[\tilde{G}_{t}\left(B_{\cdot \wedge t}\right)\tilde{v}\left(t, x\right)+\tilde{h}_{t}\left(x, B_{\cdot \wedge t}\right)+\tilde{H}_{t}\left(B_{\cdot \wedge t}\right)\right] \circ dB_{t}, \quad \left(t, x\right) \in \left(0, T\right] \times \mathbb{R}^{d};\\
		&\tilde{v}\left(0, x\right)=\tilde{l}\left(x, B_{0}\right), \quad x \in \mathbb{R}^{d}.
	\end{aligned}
	\right.
\end{equation}
In view of the continuity of map $\mathbf{X} \mapsto \tilde{u}\left(\cdot, \cdot, \mathbf{X}\right)$, 
by \cite{RPB1}*{Proposition 3.6} we have the following result. 
\begin{proposition} \label{spdesolueu}
	Let Assumption \ref{rpdeassu} and the condition \eqref{pathd} hold, 
	$\tilde{u}$ be a solution to system of rough PDEs \eqref{frpde}, and 
	$B^{n}, n=0, 1, \cdots$,  be the dyadic piecewise linear approximation to $B$, i.e. 
	\begin{equation*}
		B_{t}^{n}:=B_{i2^{-n}T}+\left(\frac{2^{n}t}{T}-i\right)\delta B_{i2^{-n}T, \left(i+1\right)2^{-n}T},
	\end{equation*}
	for every $t \in \left[i2^{-n}T, \left(i+1\right)2^{-n}T\right]$ and $i=0, \cdots, 2^{n}-1$. 
	Define 
	\begin{equation*} 
		\tilde{v}\left(\cdot, \cdot, \tilde{\omega}\right):=\tilde{u}\left(\cdot, \cdot, \mathbf{B}\left(\tilde{\omega}\right)\right), \quad \tilde{v}^{n}\left(\cdot, \cdot, \tilde{\omega}\right):=\tilde{u}\left(\cdot, \cdot, \mathbf{B}^{n}\left(\tilde{\omega}\right)\right), \quad a.s.\ \tilde{\omega} \in \tilde{\Omega},
	\end{equation*}
	where 
	\begin{equation*}
		\mathbb{B}_{s, t}^{n}:=\int_{s}^{t} \delta B_{s, r}^{n} \otimes \dot{B}_{r}^{n} d r, \quad \forall \left(s, t\right) \in \Delta, 
	\end{equation*}
	and $\mathbf{B}^{n}:=\left(B^{n}, \mathbb{B}^{n}\right)$. 
	Then $\tilde{v}^{n}$ is a viscosity solution to the following (random) system of PDEs 
	\begin{equation*}
		\left\{
		\begin{aligned}
			&d\tilde{v}^{n}\left(t, x\right)=\frac{1}{2}D^{2}\tilde{v}^{n}\left(t, x\right)\tilde{\sigma}_{t}^{2}\left(x, B_{\cdot \wedge t}^{n}\right)dt+D\tilde{v}^{n}\left(t, x\right)\tilde{b}_{t}\left(x, B_{\cdot \wedge t}^{n}\right)dt\\
			&\quad+\tilde{f}_{t}\left(x, \tilde{v}^{n}\left(t, x\right), D\tilde{v}^{n}\left(t, x\right)\tilde{\sigma}_{t}\left(x, B_{\cdot \wedge t}^{n}\right), B_{\cdot \wedge t}^{n}\right)dt\\
			&\quad+\left[\tilde{G}_{t}\left(B_{\cdot \wedge t}^{n}\right)\tilde{v}^{n}\left(t, x\right)+\tilde{h}_{t}\left(x, B_{\cdot \wedge t}^{n}\right)+\tilde{H}_{t}\left(B_{\cdot \wedge t}^{n}\right)\right]\dot{B}_{t}^{n}dt, \quad \left(t, x\right) \in \left(0, T\right] \times \mathbb{R}^{d};\\
			&\tilde{v}^{n}\left(0, x\right)=\tilde{l}\left(x, B_{0}\right), \quad x \in \mathbb{R}^{d},
		\end{aligned}
		\right.
	\end{equation*}
	and we have 
	\begin{equation*}
		\lim_{n \rightarrow \infty}\left|\tilde{v}^{n}\left(\cdot, \cdot, \tilde{\omega}\right)-\tilde{v}\left(\cdot, \cdot, \tilde{\omega}\right)\right|_{\infty}=0, \quad a.s.\ \tilde{\omega} \in \tilde{\Omega}.
	\end{equation*}
\end{proposition}
\begin{remark}
	When all the coefficients in SPDE \eqref{spde} do not depend on $B$, 
	by the Wong--Zakai approximation, $\tilde{v}$ is indeed a solution to \eqref{spde}. 
\end{remark}
\begin{appendix}
\section{Some well-posedness results on BSDEs} \label{SA}
Consider the following BSDE 
\begin{equation}\label{qbsde}
	Y_{t}=\xi+\int_{t}^{T} f_{r}\left(Y_{r}, Z_{r}\right) d r-\int_{t}^{T} Z_{r}\, d W_{r}, \quad t \in \left[0, T\right].
\end{equation}
Here, $\xi$ is an $\mathbb{R}^{k}$-valued random variable, 
$\left(Y, Z\right)$ is the pair of unknown processes and the random vector field 
$f:\Omega \times \left[0, T\right] \times \mathbb{R}^{k} \times \mathbb{R}^{k \times d} \rightarrow \mathbb{R}^{k}$ 
is progressively measurable. 
\subsection{Well-posedness under the small terminal value condition} \label{SA.1}
The following result is based on Tevzadze \cite{QBSDE}*{Proposition 1}. 
\begin{theorem} \label{bsdesolueu}
	Let Assumption \ref{assu2} hold. 
	Then there exist sufficiently small positive real numbers $\varepsilon$ and $R$,
	only depending on $T$ and $L$, such that for 
	$\left\|\xi\right\|_{\infty} \leq \varepsilon$ and 
	$\left\|\int_{0}^{T} \left(\lambda_{r}+\mu_{r}^{2}\right) d r\right\|_{\infty} \leq \varepsilon$, 
	the BSDE \eqref{qbsde} has a unique solution 
	$\left(Y, Z\right) \in \mathbb{S}^{\infty} \times BMO$ satisfying 
	\begin{equation} \label{bsdesolue1}
		\left\|Y\right\|_{\mathbb{S}^{\infty}}+\left\|Z\right\|_{BMO} \leq R. 
	\end{equation}
\end{theorem}
\begin{proof}
	Suppose that $\left\|\xi\right\|_{\infty}\vee\left\|\int_{0}^{T} \left(\lambda_{r}+\mu_{r}^{2}\right) d r\right\|_{\infty} \leq 1$. 
	Define 
	\begin{equation*} 
		\mathcal{B}:=\left\{\left(Y, Z\right) \in \mathbb{S}^{\infty} \times BMO :Y_{T}=\xi, \left\|Y, Z\right\|:=\left\|Y\right\|_{\mathbb{S}^{\infty}}+\left\|Z\right\|_{BMO} \leq R\right\}, 
	\end{equation*}
	where $R \in \left(0, 1\right]$ is waiting to be determined later. 
	For any $\left(Y, Z\right) \in \mathcal{B}$, 
	in view of the properties of BMO martingales (e.g. \cite{BMOm}*{Corollary 2.1}), we have 
	\begin{align*}
		\mathbb{E}\left(\int_{0}^{T} \left|f_{r}\left(Y_{r}, Z_{r}\right)\right|d r\right)^{2} &\leq \mathbb{E}\left(\int_{0}^{T} \left(\lambda_{r}+L\left|Y_{r}\right|^{2}+L\left|Z_{r}\right|^{2}\right) d r\right)^{2}\\
		&\lesssim_{L} \left\|\int_{0}^{T} \lambda_{r} d r\right\|_{\infty}^{2}+T^{2}\left\|Y\right\|_{\mathbb{S}^{\infty}}^{4}+T\left\|Z\right\|_{BMO}^{4}. 
	\end{align*}
	Then linear BSDE 
	\begin{equation*}
		\Psi_{t}^{Y}=\xi+\int_{t}^{T} f_{r}\left(Y_{r}, Z_{r}\right) d r-\int_{t}^{T} \Psi_{r}^{Z} d W_{r}, \quad t \in \left[0, T\right]
	\end{equation*}
	has a unique solution $\left(\Psi^{Y}, \Psi^{Z}\right) \in \mathbb{L}^{2} \times \mathbb{L}^{2}$. 
	Applying Itô's formula, we have 
	\begin{equation*}
		\left|\Psi_{t}^{Y}\right|^{2}=\left|\xi\right|^{2}+2\int_{t}^{T} \left(\Psi_{t}^{Y}\right)^{\top}f_{r}\left(Y_{r}, Z_{r}\right) d r-2\int_{t}^{T} \left(\Psi_{t}^{Y}\right)^{\top}\Psi_{r}^{Z} d W_{r}-\int_{t}^{T} \left|\Psi_{r}^{Z}\right|^{2} d r,
	\end{equation*}
	for every $t \in \left[0, T\right]$. Then 
	\begin{align*}
		&\left|\Psi_{t}^{Y}\right|^{2}+\mathbb{E}_{t}\int_{t}^{T} \left|\Psi_{r}^{Z}\right|^{2} d r\\
		&\quad \leq \left\|\xi\right\|_{\infty}^{2}+2\mathbb{E}_{t}\int_{t}^{T} \left|\Psi_{t}^{Y}\right|\left|f_{r}\left(Y_{r}, Z_{r}\right)\right| d r\\
		&\quad \leq \frac{1}{4}\left\|\Psi^{Y}\right\|_{\mathbb{S}^{\infty}}^{2}+\left\|\xi\right\|_{\infty}^{2}+C\left(\mathbb{E}_{t}\int_{t}^{T} \left(\lambda_{r}+L\left|Y_{r}\right|^{2}+L\left|Z_{r}\right|^{2}\right) d r\right)^{2}\\
		&\quad \leq \frac{1}{4}\left\|\Psi^{Y}\right\|_{\mathbb{S}^{\infty}}^{2}+C_{L}\left(\left\|\xi\right\|_{\infty}^{2}+\left\|\int_{0}^{T} \lambda_{r} d r\right\|_{\infty}^{2}+T^{2}\left\|Y\right\|_{\mathbb{S}^{\infty}}^{4}+\left\|Z\right\|_{BMO}^{4}\right)\\
		&\quad \leq \frac{1}{4}\left\|\Psi^{Y}\right\|_{\mathbb{S}^{\infty}}^{2}+C_{L}\left(\left\|\xi\right\|_{\infty}^{2}+\left\|\int_{0}^{T} \lambda_{r} d r\right\|_{\infty}^{2}+\left(T^{2}+1\right)R^{4}\right).
	\end{align*}
	Hence, we have 
	\begin{equation*}
		\left\|\Psi^{Y}, \Psi^{Z}\right\| \leq c_{1}\left(\left\|\xi\right\|_{\infty}+\left\|\int_{0}^{T} \lambda_{r} d r\right\|_{\infty}+\left(T+1\right)R^{2}\right).
	\end{equation*}
	Here $c_{1} \geq 1$ only depends on $L$, and so does the following constant $c_{2}$. 
	Therefore, we can establish the map 
	\begin{equation*} 
		\Psi: \mathcal{B} \rightarrow \mathbb{S}^{\infty} \times BMO, \quad \left(Y, Z\right) \mapsto \left(\Psi^{Y}, \Psi^{Z}\right). 
	\end{equation*}
	\indent
	For any other $\left(\bar{Y}, \bar{Z}\right) \in \mathcal{B}$, 
	put $\Delta Y:=Y-\bar{Y}$, $\Delta Z:=Z-\bar{Z}$ and similarly for other functions. 
	In a similar way, we have 
	\begin{align*}
		&\left|\Delta \Psi_{t}^{Y}\right|^{2}+\mathbb{E}_{t}\int_{t}^{T} \left|\Delta \Psi_{r}^{Z}\right|^{2} d r\\
		&\leq \frac{1}{4}\left\|\Delta \Psi^{Y}\right\|_{\mathbb{S}^{\infty}}^{2}+C\left(\mathbb{E}_{t}\int_{t}^{T} \left|\Delta f_{r}\left(Y_{r}, Z_{r}\right)\right| d r\right)^{2}\\
		&\leq \frac{1}{4}\left\|\Delta \Psi^{Y}\right\|_{\mathbb{S}^{\infty}}^{2}+C\left(\mathbb{E}_{t}\int_{t}^{T} \left(\lambda_{r}+L\left(\left|Y_{r}\right|^{2}+\left|\bar{Y}_{r}\right|^{2}+\left|Z_{r}\right|^{2}+\left|\bar{Z}_{r}\right|^{2}\right)\right)\left|\Delta Y_{r}\right| d r\right)^{2}\\
		&~+C\left(\mathbb{E}_{t}\int_{t}^{T} \left(\mu_{r}+L\left(\left|Y_{r}\right|+\left|\bar{Y}_{r}\right|+\left|Z_{r}\right|+\left|\bar{Z}_{r}\right|\right)\right)\left|\Delta Z_{r}\right| d r\right)^{2}\\
		&\leq \frac{1}{4}\left\|\Delta \Psi^{Y}\right\|_{\mathbb{S}^{\infty}}^{2}\\
		&~+C_{L}\left(\left\|\int_{0}^{T} \lambda_{r} d r\right\|_{\infty}+T\left\|Y\right\|_{\mathbb{S}^{\infty}}^{2}+T\left\|\bar{Y}\right\|_{\mathbb{S}^{\infty}}^{2}+\left\|Z\right\|_{BMO}^{2}+\left\|\bar{Z}\right\|_{BMO}^{2}\right)^{2}\left\|\Delta Y\right\|_{\mathbb{S}^{\infty}}^{2}\\
		&~+C_{L}\left(\left\|\int_{0}^{T} \mu_{r}^{2} d r\right\|_{\infty}+T\left\|Y\right\|_{\mathbb{S}^{\infty}}^{2}+T\left\|\bar{Y}\right\|_{\mathbb{S}^{\infty}}^{2}+\left\|Z\right\|_{BMO}^{2}+\left\|\bar{Z}\right\|_{BMO}^{2}\right)\left\|\Delta Z\right\|_{BMO}^{2}\\
		&\leq \frac{1}{4}\left\|\Delta \Psi^{Y}\right\|_{\mathbb{S}^{\infty}}^{2}+C_{L}\left(\left\|\int_{0}^{T} \left(\lambda_{r}+\mu_{r}^{2}\right) d r\right\|_{\infty}+\left(T^{2}+1\right)R^{2}\right)\left\|\Delta Y, \Delta Z\right\|^{2}.
	\end{align*}
	Hence, we have 
	\begin{equation*}
		\left\|\Delta \Psi^{Y}, \Delta \Psi^{Z}\right\| \leq c_{2}\left(\left\|\int_{0}^{T} \left(\lambda_{r}+\mu_{r}^{2}\right) d r\right\|_{\infty}^{\frac{1}{2}}+\left(T+1\right)R\right)\left\|\Delta Y, \Delta Z\right\|.
	\end{equation*}
	\indent
	Take 
	\begin{equation*}
		\varepsilon:=\frac{1}{16\left(c_{1} \vee c_{2}\right)^{2}\left(T+1\right)}, \quad R:=\frac{1}{4\left(c_{1} \vee c_{2}\right)\left(T+1\right)}.
	\end{equation*}
	Then for $\left\|\xi\right\|_{\infty}\vee \left\|\int_{0}^{T} \left(\lambda_{r}+\mu_{r}^{2}\right) d r\right\|_{\infty} \leq \varepsilon$, 
	we have 
	\begin{equation*}
		c_{1}\left(\left\|\xi\right\|_{\infty}+\left\|\int_{0}^{T} \lambda_{r} d r\right\|_{\infty}+\left(T+1\right)R^{2}\right) \leq R
	\end{equation*}
	and 
	\begin{equation*}
		c_{2}\left(\left\|\int_{0}^{T} \left(\lambda_{r}+\mu_{r}^{2}\right) d r\right\|_{\infty}^{\frac{1}{2}}+\left(T+1\right)R\right) \leq \frac{1}{2}.
	\end{equation*}
	Therefore, $\Psi$ is a contraction map in $\mathcal{B}$. 
	Applying the fixed-point theorem, $\Psi$ has a unique fixed point 
	$\left(Y, Z\right)$ in $\mathcal{B}$, which is the unique solution to \eqref{qbsde} 
	satisfying the estimate \eqref{bsdesolue1}. 
\end{proof}
Then we have the following stability result. 
\begin{theorem} \label{bsdesolumc}
	Assume that $\left(\xi, f\right)$ and $\left(\xi^{n}, f^{n}\right)$ for $n=1, 2, \cdots,$ 
	satisfy Assumption \ref{assu2} with the same $L, \lambda$ and $\mu$, 
	and the inequality 
	$$\left\|\xi\right\|_{\infty} \vee \sup_{n}\left\|\xi^{n}\right\|_{\infty} \vee \left\|\int_{0}^{T} \left(\lambda_{r}+\mu_{r}^{2}\right) d r\right\|_{\infty} \leq \varepsilon$$
	holds for the constant $\varepsilon$ in Theorem \ref{bsdesolueu}. 
	Let $\left(Y, Z\right)$ and $\left(Y^{n}, Z^{n}\right), n=1, 2, \cdots,$ 
	be the solution to the corresponding BSDE \eqref{qbsde} satisfying 
	the corresponding estimate \eqref{bsdesolue1}. Assume 
	\begin{equation*} 
		\lim_{n \rightarrow \infty}\left(\left\|\xi^{n}-\xi\right\|_{\infty}+\mathop{\esssup}\limits_{\left(\omega, t\right)}\mathbb{E}_{t}\int_{t}^{T} \left|f_{r}^{n}\left(Y_{r}, Z_{r}\right)-f_{r}\left(Y_{r}, Z_{r}\right)\right| d r\right)=0.
	\end{equation*}
	Then, we have 
	\begin{equation*} 
		\lim_{n \rightarrow \infty}\left(\left\|Y^{n}-Y\right\|_{\mathbb{S}^{\infty}}+\left\|Z^{n}-Z\right\|_{BMO}\right)=0.
	\end{equation*}
\end{theorem}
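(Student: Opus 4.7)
The plan is to mimic the contraction step in the proof of Theorem~\ref{bsdesolueu}, applied to the difference of the two solutions. Setting $\Delta_{k}Y := Y^{k}-Y$, $\Delta_{k}Z := Z^{k}-Z$, $\Delta_{k}\xi := \xi^{k}-\xi$, and
\begin{equation*}
\delta_{k} := \mathop{\esssup}\limits_{\left(\omega, t\right)} \mathbb{E}_{t}\int_{t}^{T} \left|f_{r}^{k}\left(Y_{r}, Z_{r}\right)-f_{r}\left(Y_{r}, Z_{r}\right)\right| d r,
\end{equation*}
which vanishes as $k \rightarrow \infty$ by hypothesis, the pair $\left(\Delta_{k}Y, \Delta_{k}Z\right)$ solves
\begin{equation*}
\Delta_{k}Y_{t} = \Delta_{k}\xi + \int_{t}^{T}\left[\left(f_{r}^{k}\left(Y_{r}^{k}, Z_{r}^{k}\right)-f_{r}^{k}\left(Y_{r}, Z_{r}\right)\right) + \left(f_{r}^{k}\left(Y_{r}, Z_{r}\right)-f_{r}\left(Y_{r}, Z_{r}\right)\right)\right] d r - \int_{t}^{T} \Delta_{k}Z_{r}\, d W_{r}.
\end{equation*}

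Next, I would apply It\^o's formula to $\left|\Delta_{k}Y\right|^{2}$ and take conditional expectation $\mathbb{E}_{t}$, exactly as in the proof of Theorem~\ref{bsdesolueu}. Assumption~\ref{assu2}(ii) (integrated along a line segment) controls the first bracket by
\begin{equation*}
\bigl[\lambda_{r} + L(\left|Y_{r}^{k}\right|^{2}+\left|Y_{r}\right|^{2}+\left|Z_{r}^{k}\right|^{2}+\left|Z_{r}\right|^{2})\bigr]\left|\Delta_{k}Y_{r}\right| + \bigl[\mu_{r} + L(\left|Y_{r}^{k}\right|+\left|Y_{r}\right|+\left|Z_{r}^{k}\right|+\left|Z_{r}\right|)\bigr]\left|\Delta_{k}Z_{r}\right|,
\end{equation*}
while the second bracket is absorbed into $\delta_{k}$ after pulling $\left\|\Delta_{k}Y\right\|_{\infty}$ outside the conditional expectation. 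The arithmetic-geometric mean inequality $2ab \leq \frac{1}{4}a^{2}+4b^{2}$ and Cauchy-Schwarz --- combined with the a priori bound $\left\|(Y,Z)\right\|, \left\|(Y^{k},Z^{k})\right\| \leq R$ and the smallness $\left\|\int_{0}^{T}(\lambda_{r}+\mu_{r}^{2})\,dr\right\|_{\infty} \leq \varepsilon$ --- then yield, in the same bookkeeping as the contraction estimate of Theorem~\ref{bsdesolueu},
\begin{equation*}
\left\|\left(\Delta_{k}Y, \Delta_{k}Z\right)\right\| \leq \tfrac{1}{2}\left\|\left(\Delta_{k}Y, \Delta_{k}Z\right)\right\| + C_{T, L}\bigl(\left\|\Delta_{k}\xi\right\|_{\infty} + \delta_{k}\bigr).
\end{equation*}
Rearranging gives $\left\|(\Delta_{k}Y, \Delta_{k}Z)\right\| \leq 2 C_{T, L}(\left\|\Delta_{k}\xi\right\|_{\infty}+\delta_{k}) \to 0$.

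The main obstacle is the same as in the existence proof: because $f^{k}$ has quadratic growth in $(y,z)$, the linearization produces terms of the form $R\cdot\left|\Delta_{k}Y\right|$ and $R\cdot\left|\Delta_{k}Z\right|$, which must be absorbed back into the left-hand side by means of Young's inequality and the BMO bound $\mathbb{E}_{t}\int_{t}^{T}\left|\Delta_{k}Z_{r}\right|^{2}dr$. This absorption works precisely because $\varepsilon$ and $R$ from Theorem~\ref{bsdesolueu} were chosen small enough to yield contraction ratio $\frac{1}{2}$; no new smallness beyond that of Theorem~\ref{bsdesolueu} is needed, since the hypothesis places $(\xi, f)$ and each $(\xi^{k}, f^{k})$ in the same contraction regime. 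Once this absorption is carried out, the convergence $\left\|\Delta_{k}\xi\right\|_{\infty}+\delta_{k} \to 0$ delivers the claim immediately.
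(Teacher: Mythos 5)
Your proposal is correct and follows essentially the same route as the paper: split the driver difference as $\left(f^{k}\left(Y^{k}, Z^{k}\right)-f^{k}\left(Y, Z\right)\right)+\left(f^{k}-f\right)\left(Y, Z\right)$, apply It\^o's formula to $\left|\Delta_{k}Y\right|^{2}$ with conditional expectation, use the derivative bounds of Assumption \ref{assu2}(ii) together with the a priori bound $R$, the BMO norms and the smallness of $\varepsilon$ to absorb the quadratic terms with contraction factor strictly less than one, and then rearrange to bound $\left\|\left(\Delta_{k}Y, \Delta_{k}Z\right)\right\|$ by a constant times $\left\|\Delta_{k}\xi\right\|_{\infty}+\delta_{k}$. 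This is precisely the paper's argument (it reuses the contraction bookkeeping of Theorem \ref{bsdesolueu} with the same constants), so no further comment is needed.
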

\begin{proof}
	Put $\Delta_{n}\xi:=\xi^{n}-\xi$, $\Delta_{n}f:=f^{n}-f$ 
	and similarly for other functions. For every fixed $n$, since 
	\begin{equation*}
		\Delta_{n}\left[f\left(Y, Z\right)\right]=\left[f^{n}\left(Y^{n}, Z^{n}\right)-f^{n}\left(Y, Z\right)\right]+\left(\Delta_{n}f\right)\left(Y, Z\right), 
	\end{equation*}
	analogous to the proof of Theorem \ref{bsdesolueu}, we have 
	\begin{align*}
		&\left|\Delta_{n}Y_{t}\right|^{2}+\mathbb{E}_{t}\int_{t}^{T} \left|\Delta_{n}Z_{r}\right|^{2} d r\\
		&\quad \leq \frac{1}{4}\left\|\Delta_{n}Y\right\|_{\mathbb{S}^{\infty}}^{2}+\left\|\Delta_{n}\xi\right\|_{\infty}+C\left(\mathbb{E}_{t}\int_{t}^{T} \left|\Delta_{n}\left[f\left(Y, Z\right)\right]_{r}\right| d r\right)^{2}\\
		&\quad \leq \frac{1}{4}\left\|\Delta_{n}Y\right\|_{\mathbb{S}^{\infty}}^{2}+C\left(\mathbb{E}_{t}\int_{t}^{T} \left|f_{r}^{n}\left(Y_{r}^{n}, Z_{r}^{n}\right)-f_{r}^{n}\left(Y_{r}, Z_{r}\right)\right| d r\right)^{2}\\
		&\quad \quad+\left\|\Delta_{n}\xi\right\|_{\infty}+C\left(\mathbb{E}_{t}\int_{t}^{T} \left|\left(\Delta_{n}f\right)_{r}\left(Y_{r}, Z_{r}\right)\right| d r\right)^{2}\\
		&\quad \leq \frac{1}{4}\left\|\Delta_{n}Y\right\|_{\mathbb{S}^{\infty}}^{2}+C_{L}\left(\left\|\int_{0}^{T} \left(\lambda_{r}+\mu_{r}^{2}\right) d r\right\|_{\infty}+\left(T^{2}+1\right)R^{2}\right)\left\|\Delta_{n}Y, \Delta_{n}Z\right\|^{2}\\
		&\quad \quad+C\left(\left\|\Delta_{n}\xi\right\|_{\infty}+\mathop{\esssup}\limits_{\left(\omega, t\right)}\mathbb{E}_{t}\int_{t}^{T} \left|\left(\Delta_{n}f\right)_{r}\left(Y_{r}, Z_{r}\right)\right| d r\right)^{2}.
	\end{align*}
	Then 
	\begin{align*}
		\left\|\Delta_{n}Y, \Delta_{n}Z\right\| &\leq \sqrt{2}c_{2}\left(\left\|\int_{0}^{T} \left(\lambda_{r}+\mu_{r}^{2}\right) d r\right\|_{\infty}^{\frac{1}{2}}+\left(T+1\right)R\right)\left\|\Delta_{n}Y, \Delta_{n}Z\right\|\\
		&\quad+C\left(\left\|\Delta_{n}\xi\right\|_{\infty}+\mathop{\esssup}\limits_{\left(\omega, t\right)}\mathbb{E}_{t}\int_{t}^{T} \left|\left(\Delta_{n}f\right)_{r}\left(Y_{r}, Z_{r}\right)\right| d r\right)\\
		&\leq \frac{\sqrt{2}}{2}\left\|\Delta_{n}Y, \Delta_{n}Z\right\|\\
		&\quad+C\left(\left\|\Delta_{n}\xi\right\|_{\infty}+\mathop{\esssup}\limits_{\left(\omega, t\right)}\mathbb{E}_{t}\int_{t}^{T} \left|\left(\Delta_{n}f\right)_{r}\left(Y_{r}, Z_{r}\right)\right| d r\right),
	\end{align*}
	and thus 
	\begin{align*}
		&\left\|\Delta_{n}Y, \Delta_{n}Z\right\|\\
		&\quad \lesssim \left\|\Delta_{n}\xi\right\|_{\infty}+\mathop{\esssup}\limits_{\left(\omega, t\right)}\mathbb{E}_{t}\int_{t}^{T} \left|\left(\Delta_{n}f\right)_{r}\left(Y_{r}, Z_{r}\right)\right| d r \rightarrow 0, \quad as\ n \rightarrow \infty.
	\end{align*}
\end{proof}
\subsection{Well-posedness of diagonally quadratic BSDEs} \label{SA.2}
The following result is based on Fan et al. \cite{DQBSDE2}*{Theorem 2.1, p. 110}, which requires (see \cite{DQBSDE2}*{Assumption (H2), p. 109})  that
the generator is locally Lipschitz continuous with respect to $y$ and the Lipschitz coefficient 
has a linear growth in $z$.  Here,  we assume 
\begin{equation*}
	\left|\partial_{y}f_{t}\left(y, z\right)\right| \lesssim 1+\left|y\right|^{2}+\kappa\left|z\right|^{2}, \quad \forall \left(t, y, z\right) \in \left[0, T\right] \times \mathbb{R}^{k} \times \mathbb{R}^{k \times d},
\end{equation*}
for a sufficiently small positive constant $\kappa$.
\begin{theorem} \label{bsdesolueu2}
	Let Assumption \ref{assu3} hold for $\kappa$ sufficiently small 
	(where the upper bound only depends on $\left\|\xi\right\|_{\infty}$). 
	Then there exist $\varepsilon > 0$ and a bounded subset $\mathcal{B}_{\varepsilon}$ of 
	$\mathbb{S}^{\infty}\left(\left[T-\varepsilon, T\right], \Omega; \mathbb{R}^{k}\right) \times BMO\left(\left[T-\varepsilon, T\right], \Omega; \mathbb{R}^{k \times d}\right)$, 
	only depending on $\left\|\xi\right\|_{\infty}$, such that the BSDE \eqref{qbsde} has a unique local solution $\left(Y, Z\right)$ 
	on the time interval $\left[T-\varepsilon, T\right]$ with $\left(Y, Z\right) \in \mathcal{B}_{\varepsilon}$. 
\end{theorem}
\begin{proof}
	For every $z \in \mathbb{R}^{k \times d}, h \in \mathbb{R}^{1 \times d}$ and $i=1, \cdots, k$, 
	denote by $z\left(h; i\right)$ the matrix in $\mathbb{R}^{k \times d}$ whose $i$th row is $h$ and whose $j$th row is $z^{j}$ for $j \neq i$. 
	By \cite{DQBSDE2}*{Lemma A.1}, for any $\left(U, V\right) \in \mathbb{S}^{\infty} \times BMO$, BSDE 
	\begin{equation*}
		Y_{t}^{i}=\xi^{i}+\int_{t}^{T} f_{r}^{i}\left(U_{r}, V_{r}\left(Z_{r}^{i}; i\right)\right) d r-\int_{t}^{T} Z_{r}^{i} d W_{r}, \quad t \in \left[0, T\right]; \quad i=1, \cdots, k
	\end{equation*}
	has a unique solution $\left(Y, Z\right) \in \mathbb{S}^{\infty} \times BMO$. 
	Define the quadratic solution map 
	\begin{equation*}
		\Gamma: \mathbb{S}^{\infty} \times BMO \rightarrow \mathbb{S}^{\infty} \times BMO, \quad \Gamma\left(U, V\right):=\left(Y, Z\right), \quad \forall \left(U, V\right) \in \mathbb{S}^{\infty} \times BMO.
	\end{equation*}
	Analogous to the proof of \cite{DQBSDE2}*{Theorem 2.1}, their exist $\varepsilon_{0}, R_{1}, R_{2} > 0$ only depending on $\left\|\xi\right\|_{\infty}$, 
	such that for every $\varepsilon \in \left(0, \varepsilon_{0}\right]$ and $\left(U, V\right) \in \mathcal{B}_{\varepsilon}$, 
	we have $\Gamma\left(U, V\right) \in \mathcal{B}_{\varepsilon}$, where 
	\begin{align*}
		\mathcal{B}_{\varepsilon}:=&\left\{\left(U, V\right) \in \mathbb{S}^{\infty}\left(\left[T-\varepsilon, T\right], \Omega; \mathbb{R}^{k}\right) \times BMO\left(\left[T-\varepsilon, T\right], \Omega; \mathbb{R}^{k \times d}\right):\right.\\
		&\quad \left.\left\|U\right\|_{\mathbb{S}^{\infty}} \leq R_{1}, \left\|V\right\|_{BMO}^{2} \leq R_{2}\right\}.
	\end{align*}
	For any fixed $\varepsilon \in \left(0, \varepsilon_{0}\right]$ and $\left(U, V\right), \left(\bar{U}, \bar{V}\right) \in \mathcal{B}_{\varepsilon}$, write 
	$\left(Y, Z\right):=\Gamma\left(U, V\right)$ and $\left(\bar{Y}, \bar{Z}\right):=\Gamma\left(\bar{U}, \bar{V}\right)$. 
	Put $\Delta Y:=Y-\bar{Y}$, $\Delta Z:=Z-\bar{Z}$ and similarly for other functions. 
	For every $t \in \left[T-\varepsilon, T\right]$ and $i=1, \cdots, k$, we have 
	\begin{equation*}
		\left|f_{t}^{i}\left(U_{t}, V_{t}\left(Z_{t}^{i}; i\right)\right)-f_{t}^{i}\left(U_{t}, V_{t}\left(\bar{Z}_{t}^{i}; i\right)\right)\right| \lesssim \left(1+\left|U_{t}\right|+\left|V_{t}\right|+\left|Z_{t}\right|\right)\left|\Delta Z_{t}^{i}\right|.
	\end{equation*}
	Analogous to the proof of \cite{DQBSDE2}*{Theorem 2.1}, 
	for every $i=1, \cdots, k$, there exist an equivalent probability measure $\mathbb{P}^{i}$ 
	and a process $\bar{W}\left(i\right)$, such that $\bar{W}\left(i\right)$ is a standard Brownian motion under $\mathbb{P}^{i}$, 
	\begin{align*}
		\Delta Y_{t}^{i}&=\int_{t}^{T} \left(f_{r}^{i}\left(U_{r}, V_{r}\left(\bar{Z}_{r}^{i}; i\right)\right)-f_{r}^{i}\left(\bar{U}_{r}, \bar{V}_{r}\left(\bar{Z}_{r}^{i}; i\right)\right)\right) dr\\
		&\quad-\int_{t}^{T} \Delta Z_{r}^{i} d\bar{W}_{r}\left(i\right), \quad \forall t \in \left[T-\varepsilon, T\right], 
	\end{align*}
	and 
	\begin{equation*}
		\left\|F\right\|_{BMO} \lesssim \left\|F\right\|_{BMO\left(\mathbb{P}^{i}\right)} \lesssim \left\|F\right\|_{BMO}, \quad \forall F \in BMO\left(\left[T-\varepsilon, T\right], \Omega; \mathbb{R}^{k \times d}\right).
	\end{equation*}
	By \cite{BMOm}*{Corollary 2.1}, applying Hölder's inequality, for every $t \in \left[T-\varepsilon, T\right]$ and $i=1, \cdots, k$ we have 
	\begin{align*}
		&\left|\Delta Y_{t}^{i}\right|^{2}+\mathbb{E}_{t}^{i}\int_{t}^{T}\left|\Delta Z_{r}^{i}\right|^{2} dr\\
		&\quad \leq \mathbb{E}_{t}^{i}\left(\int_{t}^{T} \left|f_{r}^{i}\left(U_{r}, V_{r}\left(\bar{Z}_{r}^{i}; i\right)\right)-f_{r}^{i}\left(\bar{U}_{r}, \bar{V}_{r}\left(\bar{Z}_{r}^{i}; i\right)\right)\right| dr\right)^{2}\\
		&\quad \lesssim \mathbb{E}_{t}^{i}\left(\int_{t}^{T} \left(1+\left|U_{r}\right|^{2}+\left|\bar{U}_{r}\right|^{2}+\kappa\left|V_{r}\right|^{2}+\kappa\left|\bar{V}_{r}\right|^{2}\right)\left|\Delta U_{r}\right| dr\right)^{2}\\
		&\quad \quad+\mathbb{E}_{t}^{i}\left(\int_{t}^{T} \left(1+\left|U_{r}\right|+\left|\bar{U}_{r}\right|+\left|V_{r}\right|^{\theta}+\left|\bar{V}_{r}\right|^{\theta}\right)\left|\Delta V_{r}\right| dr\right)^{2}\\
		&\quad \lesssim \left(\varepsilon^{2}\left(1+\left\|U\right\|_{\mathbb{S}^{\infty}}^{4}+\left\|\bar{U}\right\|_{\mathbb{S}^{\infty}}^{4}\right)+\kappa^{2}\mathbb{E}_{t}^{i}\left(\int_{t}^{T} \left(\left|V_{r}\right|^{2}+\left|\bar{V}_{r}\right|^{2}\right) dr\right)^{2}\right)\left\|\Delta U\right\|_{\mathbb{S}^{\infty}}^{2}\\
		&\quad \quad+\left(1+\left\|U\right\|_{\mathbb{S}^{\infty}}^{2}+\left\|\bar{U}\right\|_{\mathbb{S}^{\infty}}^{2}\right)\mathbb{E}_{t}^{i}\left(\int_{t}^{T} \left|\Delta V_{r}\right| dr\right)^{2}\\
		&\quad \quad+\left(\mathbb{E}_{t}^{i}\left(\int_{t}^{T} \left(\left|V_{r}\right|^{2\theta}+\left|\bar{V}_{r}\right|^{2\theta}\right) dr\right)^{2}\right)^{\frac{1}{2}}\left(\mathbb{E}_{t}^{i}\left(\int_{t}^{T} \left|\Delta V_{r}\right|^{2} dr\right)^{2}\right)^{\frac{1}{2}}\\
		&\quad \lesssim \left(\varepsilon^{2}\left(1+\left\|U\right\|_{\mathbb{S}^{\infty}}^{4}+\left\|\bar{U}\right\|_{\mathbb{S}^{\infty}}^{4}\right)+\kappa^{2}\left(\left\|V\right\|_{BMO}^{4}+\left\|\bar{V}\right\|_{BMO}^{4}\right)\right)\left\|\Delta U\right\|_{\mathbb{S}^{\infty}}^{2}\\
		&\quad \quad+\left(\varepsilon\left(1+\left\|U\right\|_{\mathbb{S}^{\infty}}^{2}+\left\|\bar{U}\right\|_{\mathbb{S}^{\infty}}^{2}\right)+\varepsilon^{1-\theta}\left(\left\|V\right\|_{BMO}^{2\theta}+\left\|\bar{V}\right\|_{BMO}^{2\theta}\right)\right)\left\|\Delta V\right\|_{BMO}^{2}\\
		&\quad \lesssim \left(\varepsilon^{1-\theta}+\kappa^{2}\right)\left(1+R_{1}^{4}+R_{2}^{2}\right)\left(\left\|\Delta U\right\|_{\mathbb{S}^{\infty}}^{2}+\left\|\Delta V\right\|_{BMO}^{2}\right).
	\end{align*}
	Therefore, $\Gamma$ is a contraction map in $\mathcal{B}_{\varepsilon}$ when $\varepsilon$ and $\kappa$ are sufficiently small. 
	Applying the fixed-point theorem, we see that $\Gamma$ has a unique fixed point $\left(Y, Z\right)$ in $\mathcal{B}_{\varepsilon}$, 
	which is the unique local solution to \eqref{qbsde} on the time interval $\left[T-\varepsilon, T\right]$ in $\mathcal{B}_{\varepsilon}$. 
\end{proof}
In a similar way, we have the following stability result. 
\begin{theorem} \label{bsdesolumc2}
	Let $\left(\xi, f\right)$ and $\left(\xi^{n}, f^{n}\right), n=1, 2, \cdots$ 
	satisfy Assumption \ref{assu3} with the same $\kappa$ and $\theta$. 
	Assume that 
	$\left\|\xi\right\|_{\infty} \vee \sup_{n}\left\|\xi^{n}\right\| \leq M$ 
	for some positive real number $M$, and $\kappa$ is sufficiently small 
	(where the upper bound only depends on $M$). 
	Let $\left(Y, Z\right), \left(Y^{n}, Z^{n}\right) \in \mathcal{B}_{\varepsilon}, n=1, 2, \cdots$ 
	be the local solution to the corresponding BSDE \eqref{qbsde} on the time interval $\left[T-\varepsilon, T\right]$. 
	Assume 
	\begin{equation*} 
		\lim_{n \rightarrow \infty}\left(\left\|\xi^{n}-\xi\right\|_{\infty}+\mathop{\esssup}\limits_{\left(\omega, t\right)}\mathbb{E}_{t}\int_{t}^{T} \left|f_{r}^{n}\left(Y_{r}, Z_{r}\right)-f_{r}\left(Y_{r}, Z_{r}\right)\right| d r\right)=0.
	\end{equation*}
	Then, we have 
	\begin{equation*} 
		\lim_{n \rightarrow \infty}\left(\left\|Y^{n}-Y\right\|_{\mathbb{S}^{\infty}}+\left\|Z^{n}-Z\right\|_{BMO}\right)=0.
	\end{equation*}
\end{theorem}
\end{appendix}

\bibliographystyle{amsplain}
\bibsection
\begin{biblist}
\bib{RPDEVS1}{article}{
	author={Buckdahn, R.},
	author={Keller, C.},
	author={Ma, J.},
	author={Zhang, J.},
	title={Fully nonlinear stochastic and rough PDEs: classical and viscosity
	solutions},
	journal={Probab. Uncertain. Quant. Risk},
	volume={5},
	date={2020},
	pages={Paper No. 7, 59},
	issn={2095-9672},
	review={\MR{4172978}},
	doi={10.1186/s41546-020-00049-8},
}
\bib{RDE2}{article}{
	author={Caruana, M.},
	title={Peano's theorem for rough differential equations in
	infinite-dimensional Banach spaces},
	journal={Proc. Lond. Math. Soc. (3)},
	volume={100},
	date={2010},
	number={1},
	pages={177--215},
	issn={0024-6115},
	review={\MR{2578472}},
	doi={10.1112/plms/pdp028},
}
\bib{RPDE1}{article}{
	author={Caruana, M.},
	author={Friz, P. K.},
	title={Partial differential equations driven by rough paths},
	journal={J. Differential Equations},
	volume={247},
	date={2009},
	number={1},
	pages={140--173},
	issn={0022-0396},
	review={\MR{2510132}},
	doi={10.1016/j.jde.2009.01.026},
}
\bib{RPDE2}{article}{
	author={Caruana, M.},
	author={Friz, P. K.},
	author={Oberhauser, H.},
	title={A (rough) pathwise approach to a class of non-linear stochastic
	partial differential equations},
	journal={Ann. Inst. H. Poincar\'{e} C Anal. Non Lin\'{e}aire},
	volume={28},
	date={2011},
	number={1},
	pages={27--46},
	issn={0294-1449},
	review={\MR{2765508}},
	doi={10.1016/j.anihpc.2010.11.002},
}
\bib{QBSDE2}{article}{
   author={Cheridito, P.},
   author={Nam, K.},
   title={Multidimensional quadratic and subquadratic BSDEs with special
   structure},
   journal={Stochastics},
   volume={87},
   date={2015},
   number={5},
   pages={871--884},
   issn={1744-2508},
   review={\MR{3390237}},
   doi={10.1080/17442508.2015.1013959},
}
\bib{SM2}{article}{
	author={Chevyrev, I.},
	author={Friz, P. K.},
	title={Canonical RDEs and general semimartingales as rough paths},
	journal={Ann. Probab.},
	volume={47},
	date={2019},
	number={1},
	pages={420--463},
	issn={0091-1798},
	review={\MR{3909973}},
	doi={10.1214/18-AOP1264},
}
\bib{RSDE1}{article}{
	author={Crisan, D.},
	author={Diehl, J.},
	author={Friz, P. K.},
	author={Oberhauser, H.},
	title={Robust filtering: correlated noise and multidimensional
	observation},
	journal={Ann. Appl. Probab.},
	volume={23},
	date={2013},
	number={5},
	pages={2139--2160},
	issn={1050-5164},
	review={\MR{3134732}},
	doi={10.1214/12-AAP896},
}
\bib{RDE1}{article}{
	author={Davie, A. M.},
	title={Differential equations driven by rough paths: an approach via
	discrete approximation},
	note={[Issue information previously given as no. 2 (2007)]},
	journal={Appl. Math. Res. Express. AMRX},
	date={2008},
	pages={Art. ID abm009, 40},
	issn={1687-1200},
	review={\MR{2387018}},
}
\bib{RBSDE}{article}{
	author={Diehl, J.},
	author={Friz, P. K.},
	title={Backward stochastic differential equations with rough drivers},
	journal={Ann. Probab.},
	volume={40},
	date={2012},
	number={4},
	pages={1715--1758},
	issn={0091-1798},
	review={\MR{2978136}},
	doi={10.1214/11-AOP660},
}
\bib{RPDE3}{article}{
	author={Diehl, J.},
	author={Friz, P. K.},
	author={Stannat, W.},
	title={Stochastic partial differential equations: a rough paths view on
	weak solutions via Feynman-Kac},
	journal={Ann. Fac. Sci. Toulouse Math. (6)},
	volume={26},
	date={2017},
	number={4},
	pages={911--947},
	issn={0240-2963},
	review={\MR{3746646}},
	doi={10.5802/afst.1556},
}
\bib{RSDE2}{article}{
	author={Diehl, J.},
	author={Oberhauser, H.},
	author={Riedel, S.},
	title={A L\'{e}vy area between Brownian motion and rough paths with
	applications to robust nonlinear filtering and rough partial differential
	equations},
	journal={Stochastic Process. Appl.},
	volume={125},
	date={2015},
	number={1},
	pages={161--181},
	issn={0304-4149},
	review={\MR{3274695}},
	doi={10.1016/j.spa.2014.08.005},
}
\bib{DQBSDE2}{article}{
   author={Fan, S.},
   author={Hu, Y.},
   author={Tang, S.},
   title={Multi-dimensional backward stochastic differential equations of
   diagonally quadratic generators: the general result},
   journal={J. Differential Equations},
   volume={368},
   date={2023},
   pages={105--140},
   issn={0022-0396},
   review={\MR{4597486}},
   doi={10.1016/j.jde.2023.05.041},
}
\bib{SL2}{article}{
	author={Feyel, D.},
	author={de La Pradelle, A.},
	title={Curvilinear integrals along enriched paths},
	journal={Electron. J. Probab.},
	volume={11},
	date={2006},
	pages={no. 34, 860--892},
	issn={1083-6489},
	review={\MR{2261056}},
	doi={10.1214/EJP.v11-356},
}
\bib{RPB1}{book}{
	author={Friz, P. K.},
	author={Hairer, M.},
	title={A course on rough paths},
	series={Universitext},
	note={With an introduction to regularity structures},
	edition={2},
	publisher={Springer, Cham},
	date={[2020] \copyright 2020},
	pages={xvi+346},
	isbn={978-3-030-41556-3},
	isbn={978-3-030-41555-6},
	review={\MR{4174393}},
	doi={10.1007/978-3-030-41556-3},
}
\bib{RSDE3}{article}{
	author={Friz, P. K.},
	author={Hocquet, A.},
	author={L\^{e}, K.},
	title={Rough stochastic differential equations},
	date={2021},
	eprint={arXiv:2106.10340},
}
\bib{SM1}{article}{
	author={Friz, P. K.},
	author={Shekhar, A.},
	title={General rough integration, L\'{e}vy rough paths and a
	L\'{e}vy-Kintchine-type formula},
	journal={Ann. Probab.},
	volume={45},
	date={2017},
	number={4},
	pages={2707--2765},
	issn={0091-1798},
	review={\MR{3693973}},
	doi={10.1214/16-AOP1123},
}
\bib{RPB2}{book}{
	author={Friz, P. K.},
	author={Victoir, N. B.},
	title={Multidimensional stochastic processes as rough paths},
	series={Cambridge Studies in Advanced Mathematics},
	volume={120},
	note={Theory and applications},
	publisher={Cambridge University Press, Cambridge},
	date={2010},
	pages={xiv+656},
	isbn={978-0-521-87607-0},
	review={\MR{2604669}},
	doi={10.1017/CBO9780511845079},
}
\bib{RPJ}{article}{
	author={Friz, P. K.},
	author={Zhang, H.},
	title={Differential equations driven by rough paths with jumps},
	journal={J. Differential Equations},
	volume={264},
	date={2018},
	number={10},
	pages={6226--6301},
	issn={0022-0396},
	review={\MR{3770049}},
	doi={10.1016/j.jde.2018.01.031},
}
\bib{SL1}{article}{
	author={Gubinelli, M.},
	title={Controlling rough paths},
	journal={J. Funct. Anal.},
	volume={216},
	date={2004},
	number={1},
	pages={86--140},
	issn={0022-1236},
	review={\MR{2091358}},
	doi={10.1016/j.jfa.2004.01.002},
}
\bib{RI1}{article}{
	author={Gubinelli, M.},
	title={Ramification of rough paths},
	journal={J. Differential Equations},
	volume={248},
	date={2010},
	number={4},
	pages={693--721},
	issn={0022-0396},
	review={\MR{2578445}},
	doi={10.1016/j.jde.2009.11.015},
}
\bib{RPDEVS2}{article}{
	author={Gubinelli, M.},
	author={Tindel, S.},
	author={Torrecilla, I.},
	title={Controlled viscosity solutions of fully nonlinear rough PDEs},
	date={2014},
	eprint={arXiv:1403.2832},
}
\bib{RI2}{article}{
	author={Hairer, M.},
	title={A theory of regularity structures},
	journal={Invent. Math.},
	volume={198},
	date={2014},
	number={2},
	pages={269--504},
	issn={0020-9910},
	review={\MR{3274562}},
	doi={10.1007/s00222-014-0505-4},
}
\bib{RI3}{article}{
	author={Hairer, M.},
	author={Kelly, D.},
	title={Geometric versus non-geometric rough paths},
	journal={Ann. Inst. Henri Poincar\'{e} Probab. Stat.},
	volume={51},
	date={2015},
	number={1},
	pages={207--251},
	issn={0246-0203},
	review={\MR{3300969}},
	doi={10.1214/13-AIHP564},
}
\bib{RPDE4}{article}{
	author={Hocquet, A.},
	author={Hofmanov\'{a}, M.},
	title={An energy method for rough partial differential equations},
	journal={J. Differential Equations},
	volume={265},
	date={2018},
	number={4},
	pages={1407--1466},
	issn={0022-0396},
	review={\MR{3797622}},
	doi={10.1016/j.jde.2018.04.006},
}
\bib{RPDE5}{article}{
	author={Hofmanov\'{a}, M.},
	author={Leahy, J. M.},
	author={Nilssen, T.},
	title={On the Navier-Stokes equation perturbed by rough transport noise},
	journal={J. Evol. Equ.},
	volume={19},
	date={2019},
	number={1},
	pages={203--247},
	issn={1424-3199},
	review={\MR{3918521}},
	doi={10.1007/s00028-018-0473-z},
}
\bib{DQBSDE1}{article}{
   author={Hu, Y.},
   author={Tang, S.},
   title={Multi-dimensional backward stochastic differential equations of
   diagonally quadratic generators},
   journal={Stochastic Process. Appl.},
   volume={126},
   date={2016},
   number={4},
   pages={1066--1086},
   issn={0304-4149},
   review={\MR{3461191}},
   doi={10.1016/j.spa.2015.10.011},
}
\bib{BMaSC}{book}{
	author={Karatzas, I.},
	author={Shreve, S. E.},
	title={Brownian motion and stochastic calculus},
	series={Graduate Texts in Mathematics},
	volume={113},
	edition={2},
	publisher={Springer-Verlag, New York},
	date={1991},
	pages={xxiv+470},
	isbn={0-387-97655-8},
	review={\MR{1121940}},
	doi={10.1007/978-1-4612-0949-2},
}
\bib{BMOm}{book}{
	author={Kazamaki, N.},
	title={Continuous exponential martingales and BMO},
	series={Lecture Notes in Mathematics},
	volume={1579},
	publisher={Springer-Verlag, Berlin},
	date={1994},
	pages={viii+91},
	isbn={3-540-58042-5},
	review={\MR{1299529}},
	doi={10.1007/BFb0073585},
}
\bib{SSL1}{article}{
	author={L\^{e}, K.},
	title={A stochastic sewing lemma and applications},
	journal={Electron. J. Probab.},
	volume={25},
	date={2020},
	pages={Paper No. 38, 55},
	review={\MR{4089788}},
	doi={10.1214/20-ejp442},
}
\bib{SSL2}{article}{
   author={L\^{e}, K.},
   title={Stochastic sewing in Banach spaces},
   journal={Electron. J. Probab.},
   volume={28},
   date={2023},
   pages={Paper No. 26, 22},
   review={\MR{4546635}},
   doi={10.1214/23-ejp918},
}
\bib{RPT1}{article}{
	author={Lyons, T. J.},
	title={Differential equations driven by rough signals. I. An extension of
	an inequality of L. C. Young},
	journal={Math. Res. Lett.},
	volume={1},
	date={1994},
	number={4},
	pages={451--464},
	issn={1073-2780},
	review={\MR{1302388}},
	doi={10.4310/MRL.1994.v1.n4.a5},
}
\bib{RPT2}{article}{
	author={Lyons, T. J.},
	title={Differential equations driven by rough signals},
	journal={Rev. Mat. Iberoamericana},
	volume={14},
	date={1998},
	number={2},
	pages={215--310},
	issn={0213-2230},
	review={\MR{1654527}},
	doi={10.4171/RMI/240},
}
\bib{FK1}{article}{
	author={Pardoux, E.},
	title={Backward stochastic differential equations and viscosity solutions
	of systems of semilinear parabolic and elliptic PDEs of second order},
	conference={
		title={Stochastic analysis and related topics, VI},
		address={Geilo},
		date={1996},
	},
	book={
		series={Progr. Probab.},
		volume={42},
		publisher={Birkh\"{a}user Boston, Boston, MA},
	},
	date={1998},
	pages={79--127},
	review={\MR{1652339}},
}
\bib{FK2}{article}{
	author={Pardoux, E.},
	author={Pradeilles, F.},
	author={Rao, Z.},
	title={Probabilistic interpretation of a system of semi-linear parabolic
	partial differential equations},
	language={English, with English and French summaries},
	journal={Ann. Inst. H. Poincar\'{e} Probab. Statist.},
	volume={33},
	date={1997},
	number={4},
	pages={467--490},
	issn={0246-0203},
	review={\MR{1465798}},
	doi={10.1016/S0246-0203(97)80101-X},
}
\bib{RDE3}{article}{
	author={Riedel, S.},
	author={Scheutzow, M.},
	title={Rough differential equations with unbounded drift term},
	journal={J. Differential Equations},
	volume={262},
	date={2017},
	number={1},
	pages={283--312},
	issn={0022-0396},
	review={\MR{3567487}},
	doi={10.1016/j.jde.2016.09.021},
}
\bib{QBSDE}{article}{
	author={Tevzadze, R.},
	title={Solvability of backward stochastic differential equations with
	quadratic growth},
	journal={Stochastic Process. Appl.},
	volume={118},
	date={2008},
	number={3},
	pages={503--515},
	issn={0304-4149},
	review={\MR{2389055}},
	doi={10.1016/j.spa.2007.05.009},
}
\bib{QBSDE3}{article}{
   author={Xing, H.},
   author={\v{Z}itkovi\'{c}, G.},
   title={A class of globally solvable Markovian quadratic BSDE systems and
   applications},
   journal={Ann. Probab.},
   volume={46},
   date={2018},
   number={1},
   pages={491--550},
   issn={0091-1798},
   review={\MR{3758736}},
   doi={10.1214/17-AOP1190},
}
\bib{YI}{article}{
	author={Young, L. C.},
	title={An inequality of Hölder type, connected with Stieljes integration},
	journal={Acta Math.},
	volume={67},
	date={1936},
	pages={251--282},
}
\end{biblist}

\end{document}